\documentclass[12pt]{amsart}

\usepackage{amsmath, amssymb, amsthm, mathrsfs}
\usepackage{hyperref}
\usepackage{geometry}
\usepackage{bm}
\newtheorem{theorem}{Theorem}
\newtheorem*{theorem*}{Theorem}
\newtheorem{proposition}[theorem]{Proposition}
\newtheorem{lemma}[theorem]{Lemma}
\newtheorem{corollary}[theorem]{Corollary}

\theoremstyle{definition}

\theoremstyle{remark}
\newtheorem{remark}[theorem]{Remark}

\newcommand{\Z}{\mathbb{Z}}

\author{Pradeep Bisht, Suman Rani, Santanu Tantubay}
\address{Harish-Chandra Research Institute, Prayagraj. A CI of Homi Bhabha National Institute,\\ \small Chhatnag Road, Jhunsi, Prayagraj(Allahabad) 211019 India } 
\email{ pradeepbisht@hri.res.in, pradeepbishthri@gmail.com}
\address{Harish-Chandra Research Institute, Prayagraj. A CI of Homi Bhabha National Institute,\\ \small Chhatnag Road, Jhunsi, Prayagraj(Allahabad) 211019 India } 
\email{sumanrani@hri.res.in}
\address{Shenzhen International Center for Mathematics, Southern University of Sci-
ence and Technology, Shenzhen, Guangdong, China.}
\email{1mathsantanu@gmail.com}
\title{Automorphism groups and derivation  algebras of Hamiltonian Lie algebras}

\date{}

\begin{document}

\begin{abstract}
In this paper, we compute the automorphism group and derivation algebra of the Hamiltonian Lie algebra $\mathcal{H}_{N}$ and its derived subalgebra $\mathcal{H}_{N}'$, where $N$ is an even positive integer. The automorphism groups are shown to be $\mathbf{GSp}_{N}(\mathbb{Z})\ltimes (\mathbb{\mathbb{K}}^{\times})^{N}$ for both Lie algebras and we prove that all derivations are inner for the Hamiltonian Lie algebra, also we compute the full derivation space for the derived subalgebra of Hamiltonian Lie algebra. Finally we compute the second cohomology group of Hamiltonian Lie algebra.
\end{abstract}
\maketitle
\section{Introduction}
\label{sec:introduction}

 Lie algebras of Cartan type over algebraically closed fields of characteristic zero, particularly those arising as derivations of polynomial or Laurent polynomial rings, continue to be a rich source of examples and technique in infinite-dimensional Lie theory. The classical Witt algebra $\mathcal{W}_N$, the special Lie algebra $\mathcal{S}_N$ of divergence zero vector fields and the Hamiltonian Lie algebra $\mathcal{H}_N$ ($N$ even) on torus form an natural hierarchy of such algebras each equipped with a natural $\Z^N$-grading and connected to geometric structures on algebraic tori.  The automorphism groups of Witt algebras have been studied extensively (see, for examples \cite{R86}, \cite{Ba16}, \cite{Ba17}).  \medskip 
 
 The author in \cite{R74} gave sufficient conditions for a Lie algebra to have outer derivations. The derivation algebra of generalized Witt algebra was systematically investigated by Dokovic and Zhao in \cite{DZ98}. The derivation algebra and automorphism groups are determined for higher-rank Virasoro-like algebras in \cite{TX09}. Recently, the derivation algebras for Lie algebras of polynomial vector fields in infinitely many variables have been explicitly described by Bezushchak and Kashuba in \cite{BK25}. 
\medskip

The Hamiltonian Lie algebra $\mathcal{H}_N$ ($N$ even) defined as the subalgebra of Witt algebra $\mathcal{W}_N$ as Hamiltonian vectorfields on $N$-dimensional torus with respect to standard symplectic form occupies a distinctive position within this hierarchy.  It decomposes as 
\begin{equation*}
    \mathcal{H}_N = \mathcal{H}_N' \rtimes \mathfrak{h},\end{equation*}
where $\mathcal{H}_N^\prime=[\mathcal{H}_N,\mathcal{H}_N]$ is the simple [see Proposition \ref{prop:2.4}] derived subalgebra  graded by $\Z^N$ with one dimensional non-zero graded component and $\mathfrak{h}\cong \mathbb{K}^N$ is the Cartan subalgebra of degree zero elements, which tracks the $\Z^N$-gradation of $\mathcal{H}_N$. The Lie bracket is governed by the symplectic bilinear form on $\Z^N$, making $\mathcal{H}_N$ a natural infinite dimensional analogue of finite-dimensional symplectic Lie algebras.

\medskip
Recently weight representation theory of these Lie algebras were studied in \cite{FT25}, \cite{RP25}.

\medskip
While the automorphism groups and derivation algebras have received considerable attentions, the full structure of $\operatorname{Aut}(\mathcal{H}_N),\; \operatorname{Aut}(\mathcal{H}_N^\prime)$ and $\operatorname{Der}(\mathcal{H}_N),\;\operatorname{Der}(\mathcal{H}_N^\prime)$ remained open beyond low rank or special cases. The purpose of this paper is to determine these objects explicitly.

\medskip

 The structure of the paper is as follows.\\
 In Section 2 we first recall the necessary definitions and terminology. Later, we recall and prove lemmas that will be used in sequel. Notably, a generating set for the derived subalgebra of Hamiltonian algebra i.e. $\mathcal{H}_{N}'$ is obtained in Proposition~\ref{prop:2.3}. \\ 
 Section 3 is devoted to obtaining the automorphism groups of the derived subalgebra of Hamiltonian algebra($\mathcal{H}_{N}'$) and the Hamiltonian Lie algebra $\mathcal{H}_{N}$. It is shown in the Lemma~\ref{lem:3.2} that any automorphism of $\mathcal{H}_{N}'$ maps a $\mathbb{Z}^{N}$-graded component of $\mathcal{H}_{N}$ to another graded component. In fact, it is shown that if $\bm{r}$th-graded component is mapped to $\bm{s}$th-graded component, also this association $\bm{r}\mapsto \bm{s}$ is shown to be an odd function, that is, if $\bm{r}\mapsto \bm{s}$, then $-\bm{r}\mapsto -\bm{s}$ . Lemma~\ref{lem:3.1} together with Lemma~\ref{lem:3.2} implies that the association $\bm{r}\to \bm{s}$ actually determines an automorphism of the $\mathbb{Z}$-module $\mathbb{Z}^{N}$, and hence determine a matrix $\bm{Q}\in \mathbf{GL}_{N}(\mathbb{Z})$. The commutator relations in $\mathcal{H}_{N}'$ imply that the matrix $\bm{Q}$ lies in the group $\mathbf{GSp}_{N}(\mathbb{Z})$. Also, the explicit action of automorphism is obtained on the $\mathbb{Z}^{N}$-graded components of $\mathcal{H}_{N}'$ is obtained. The automorphism group of the Lie algebra $\mathcal{H}_{N}'$ described in Theorem~\ref{thm:3.4}. In Lemma~\ref{lem:3.5}, it is shown that any automorphism of the Hamiltonian algebra maps $\mathcal{H}_{N}'$ to $\mathcal{H}_{N}'$ and the Cartan subalgebra $\mathfrak{h}$ to itself. Finally, the automorphism group of $\mathcal{H}_{N}$ is presented in Theorem~\ref{thm:3.6}, which gives us our first theorem: %in Theorem 3.6.
 \begin{theorem*}[A]
    Let $N\geq 2$ be even, then $\operatorname{Aut}(\mathcal{H}_N)\cong \operatorname{Aut} (\mathcal{H}_N^\prime)\cong \mathbf{GSp}_N(\Z)\ltimes (\mathbb{K}^{\times})^{N}$, where $\mathbf{GSp}(\Z)$ is the conformal symplectic group over $\Z$, consisting of matrices $\mathbf{Q}\in \bm{GL}_{N}(\mathbb{Z})$ such that $\bm{Q}^{\top}\bm{J}\bm{Q}=\lambda(Q)\bm{J} \; \text{for some}\; \lambda(Q)\in  \{1,-1\}$ and  $\bm J$ is the standard symplectic matrix.  
 \end{theorem*}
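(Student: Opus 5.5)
Throughout, I use the standard realization: $\mathcal{H}_N'$ is spanned by $h(\bm r)=\sum_i (J\bm r)_i\, t^{\bm r}d_i$ for $\bm r\in\Z^N\setminus\{0\}$, with
\[
[h(\bm r),h(\bm s)]=(\bm r^{\top}J\bm s)\,h(\bm r+\bm s)
\]
(up to sign convention). The degree-zero elements $d_1,\dots,d_N$ span $\mathfrak h$, which acts by $[d,h(\bm r)]=\langle d,\bm r\rangle h(\bm r)$.

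The plan is to first determine $\operatorname{Aut}(\mathcal{H}_N')$ and then transfer the result to $\mathcal{H}_N$. Let $\phi\in\operatorname{Aut}(\mathcal{H}_N')$. By Lemma~\ref{lem:3.2}, $\phi$ sends each one-dimensional graded component $\mathbb{K}h(\bm r)$ to some $\mathbb{K}h(\bm s)$, and the induced map $\sigma:\bm r\mapsto\bm s$ is odd. Lemma~\ref{lem:3.1} upgrades $\sigma$ to an additive bijection of $\Z^N$, extended by $\sigma(0)=0$, so $\sigma$ is given by some $\bm Q\in\mathbf{GL}_N(\Z)$.

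Write $\phi(h(\bm r))=c(\bm r)h(\bm Q\bm r)$ with $c(\bm r)\in\mathbb{K}^{\times}$. Applying $\phi$ to the bracket relation gives
\[
(\bm r^{\top}J\bm s)\,c(\bm r+\bm s)=c(\bm r)c(\bm s)\,(\bm r^{\top}\bm Q^{\top}J\bm Q\bm s)
\]
for all $\bm r,\bm s$ with $\bm r+\bm s\neq 0$. For fixed $\bm r,\bm s$ the ratio $(\bm r^{\top}\bm Q^{\top}J\bm Q\bm s)/(\bm r^{\top}J\bm s)$ is therefore determined by $c$. First, the two bilinear forms must vanish on the same pairs, so $\bm Q^{\top}J\bm Q$ and $J$ have the same isotropic pairs. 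Since the set $\{(\bm r,\bm s):\bm r^{\top}J\bm s=0\}$ determines the form up to a scalar, this gives $\bm Q^{\top}J\bm Q=\lambda J$. Integrality of $\bm Q$ and $\det\bm Q=\pm1$ force $\lambda^N=1$, hence $\lambda\in\{\pm1\}$, so $\bm Q\in\mathbf{GSp}_N(\Z)$. The functional equation then becomes $c(\bm r+\bm s)=\lambda c(\bm r)c(\bm s)$ whenever $\bm r^{\top}J\bm s\ne0$. Setting $\chi=\lambda c$, extended by $\chi(0)=1$, I expect to show that $\chi$ is multiplicative on all of $\Z^N$; the case $\bm r^{\top}J\bm s=0$ is handled by decomposing $\bm s$ as a sum of vectors not orthogonal to $\bm r$. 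A character of $\Z^N$ is given by $\chi(\bm r)=\prod a_i^{r_i}$ for some $(a_1,\dots,a_N)\in(\mathbb{K}^{\times})^N$.

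Conversely, every pair $(\bm Q,\bm a)$ yields an automorphism $h(\bm r)\mapsto\lambda^{-1}\bm a^{\bm r}h(\bm Q\bm r)$, which one checks directly from the bracket. Composing two such automorphisms gives the semidirect product law, with $\mathbf{GSp}_N(\Z)$ acting on $(\mathbb{K}^{\times})^N$ via $\bm a\mapsto\bm a\circ\bm Q$. The group $(\mathbb{K}^{\times})^N$ is normal, and the map $\phi\mapsto(\bm Q,\chi)$ is injective because $\mathcal{H}_N'$ is spanned by the $h(\bm r)$. This is Theorem~\ref{thm:3.4}.

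For $\mathcal{H}_N$, Lemma~\ref{lem:3.5} shows that any $\psi\in\operatorname{Aut}(\mathcal{H}_N)$ preserves both $\mathcal{H}_N'$ and $\mathfrak h$. Restriction therefore gives a homomorphism $\operatorname{Aut}(\mathcal{H}_N)\to\operatorname{Aut}(\mathcal{H}_N')$.

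\emph{Injectivity.} If $\psi$ is the identity on $\mathcal{H}_N'$, then for $d\in\mathfrak h$ we have $[\psi(d)-d,h(\bm r)]=0$ for all $\bm r$. Since $\psi(d)-d\in\mathfrak h$ and only $0$ in $\mathfrak h$ annihilates every $h(\bm r)$, it follows that $\psi(d)=d$.

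\emph{Surjectivity.} Given $\phi$ built from $(\bm Q,\bm a)$, define $\psi$ on $\mathfrak h$ by requiring $\langle\psi(d),\bm Q\bm r\rangle=\langle d,\bm r\rangle$, that is, $\psi|_{\mathfrak h}=(\bm Q^{-1})^{\top}$ in suitable coordinates. It is then immediate that $\psi$ respects the brackets $[d,h(\bm r)]$.

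The main obstacle is the step that extracts $\bm Q^{\top}J\bm Q=\pm J$ and the multiplicativity of $c$ from the functional equation. In particular, the degenerate pairs with $\bm r^{\top}J\bm s=0$ need the decomposition trick. I would try first to evaluate the equation on pairs of standard basis vectors $e_i,e_j$ and on their sums, to pin down $\lambda$ and the values $c(e_i)$, and then to extend to all of $\Z^N$ by induction on $|\bm r|$.
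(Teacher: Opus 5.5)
Your proposal is correct and follows essentially the paper's route. Lemmas~\ref{lem:3.2} and~\ref{lem:3.1} give $\bm{Q}\in\mathbf{GSp}_N(\Z)$ together with a character of $\Z^N$ (Theorem~\ref{thm:3.4}), and Lemma~\ref{lem:3.5} together with the forced extension $D(u,\bm{0})\mapsto D(\bm{Q}^{-\top}u,\bm{0})$ carries this over to $\mathcal{H}_N$ (Theorem~\ref{thm:3.6}). Your normalization $h_{\bm r}\mapsto\lambda(\bm Q)\,\bm a^{\bm r}h_{\bm Q\bm r}$ is in fact cleaner than the paper's, because it makes $\bm Q\mapsto\sigma_{\bm Q}$ a genuine homomorphism and hence an honest splitting; the paper's choice $(-1)^{|\bm r|-1}h_{\bm Q\bm r}$ for anti-symplectic $\bm Q$ is not closed under composition (for $N=2$ and $\bm Q=\begin{pmatrix}1&0\\1&-1\end{pmatrix}$ one gets $\sigma_{\bm Q}^2(h_{e_1})=-h_{e_1}$ although $\bm Q^2=\bm I$).
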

In Section 4, we first prove in Lemma~\ref{lem:4.3} that any graded derivation of $\mathcal{H}_{N}'$ of nonzero degree $\bm{r}$ is inner by obtaining a generating set for $\mathcal{H}_{N}'$ . In Lemma~\ref{lem:4.4}, we describe all graded derivations of degree $\bm{0}$ by doing a series of calculations for constants $c_{\bm{r}}$  , where $ h_{\bm{r}}\to c_{\bm{r}}h_{\bm{r}}$ for all $\bm{r}\in \mathbb{Z}^{N}\setminus\{\bm{0}\}$, under a graded derivation of degree zero, (see ~\eqref{eq:17} and ~\eqref{eq:18}). Derivation algebras of $\mathcal{H}_{N}'$ and $\mathcal{H}_{N}$ is described in Theorem~\ref{thm:4.5} and Theorem~\ref{thm:4.6}. Our second theorem is stated as  

\begin{theorem*}[B]
   Let $N\geq 2$ be even, then $\operatorname{Der}(\mathcal{H}_N)=\operatorname{Der}(\mathcal{H}_N^\prime)\cong  \mathcal{H}_N$. In particular all derivations for Hamiltonian Lie algebras are inner.
\end{theorem*}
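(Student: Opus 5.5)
Throughout I use the standard realization: $\mathcal{H}_N'$ has basis $h_{\bm r}$, $\bm r\in\Z^N\setminus\{\bm 0\}$, with $[h_{\bm r},h_{\bm s}]=(\bm r^{\top}\bm J\bm s)\,h_{\bm r+\bm s}$. Here $h_{\bm r}$ is the Hamiltonian field of $t^{\bm r}$, and $h_{\bm 0}=0$. The Cartan subalgebra $\mathfrak{h}=\operatorname{span}\{d_1,\dots,d_N\}$ acts by $[d_i,h_{\bm r}]=r_ih_{\bm r}$.

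The plan is to use that $\mathcal{H}_N'$ is $\Z^N$-graded and finitely generated, which is the content of Proposition~\ref{prop:2.3}. This lets me decompose any derivation $D$ of $\mathcal{H}_N'$ as $D=\sum_{\bm r}D_{\bm r}$, where $D_{\bm r}(h_{\bm s})$ is the degree $\bm r+\bm s$ component of $D(h_{\bm s})$. Each $D_{\bm r}$ is again a derivation, and for each fixed generator only finitely many $D_{\bm r}$ are nonzero on it.

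\textbf{Nonzero degrees.} For $\bm r\neq\bm 0$ I show that $D_{\bm r}$ is inner. Write $D_{\bm r}(h_{\bm s})=a_{\bm s}h_{\bm r+\bm s}$. For $\bm s$ with $\bm r^{\top}\bm J\bm s\neq 0$, set $\phi=a_{\bm s}/(\bm r^{\top}\bm J\bm s)$. I expect the derivation identity applied to brackets among generators to force this ratio to be independent of $\bm s$. Then $D_{\bm r}-\phi\,\mathrm{ad}\,h_{\bm r}$ vanishes on the generators with $\bm r^{\top}\bm J\bm s\ne0$. I would then extend the vanishing to all generators: if $\bm r^{\top}\bm J\bm s=0$, write $h_{\bm s}$ as a bracket of elements that do pair nontrivially with $\bm r$. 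This is Lemma~\ref{lem:4.3}.

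\textbf{Degree zero.} A degree-zero derivation has the form $h_{\bm s}\mapsto c_{\bm s}h_{\bm s}$. The derivation condition gives $c_{\bm s}+c_{\bm t}=c_{\bm s+\bm t}$ whenever $\bm s^{\top}\bm J\bm t\neq0$. The main obstacle is showing that $c$ extends to an additive function $\Z^N\to\mathbb{K}$, because additivity is only known on symplectically non-orthogonal pairs. My approach is to compare both sides through auxiliary vectors: for any $\bm s,\bm t$, choose $\bm u$ non-orthogonal to everything relevant, and split $c_{\bm s+\bm t}$ via $\bm u$. Once $c$ is linear, $c_{\bm s}=\sum_i\mu_is_i$, so the derivation is $\mathrm{ad}(\sum_i\mu_id_i)$. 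This description holds on $\mathcal{H}_N'$ but is outer there; this is Lemma~\ref{lem:4.4}.

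\textbf{Summing up.} For a general $D$ it remains to see that $D$ is inner modulo $\mathfrak h$ after summing the components. On each generator only finitely many $D_{\bm r}$ are nonzero, and $D_{\bm r}$ equals $\phi_{\bm r}\,\mathrm{ad}\,h_{\bm r}$. Hence $\phi_{\bm r}$ vanishes for all but finitely many $\bm r$: a nonzero $\phi_{\bm r}$ is visible on some generator, because $\mathrm{ad}\,h_{\bm r}$ cannot kill all generators, as they generate a centerless algebra. Therefore $\operatorname{Der}(\mathcal{H}_N')=\mathrm{ad}(\mathcal{H}_N')\oplus\mathfrak h\cong\mathcal{H}_N$.

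\textbf{Passing to $\mathcal{H}_N$.} Any derivation of $\mathcal{H}_N$ preserves the derived algebra $\mathcal{H}_N'$, so it restricts to $\mathcal{H}_N'$. Subtracting an inner derivation $\mathrm{ad}\,x$ with $x\in\mathcal{H}_N$, I may assume the derivation kills $\mathcal{H}_N'$. Then $[D(d_i),h_{\bm s}]=D([d_i,h_{\bm s}])=0$ for all $\bm s$. The centralizer of $\mathcal{H}_N'$ in $\mathcal{H}_N$ is zero, so $D(d_i)=0$ and $D=0$. Since $\mathcal{H}_N$ is centerless, $\operatorname{Der}(\mathcal{H}_N)\cong\mathcal{H}_N$.
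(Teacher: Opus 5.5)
Your proposal is correct as a proof of Theorem B, provided you rely on Lemmas~\ref{lem:4.3} and~\ref{lem:4.4} as you do. It has the same skeleton as the paper: graded decomposition of a derivation of $\mathcal{H}_N'$, Lemma~\ref{lem:4.3} in nonzero degrees, Lemma~\ref{lem:4.4} in degree $\bm 0$, then passage to $\mathcal{H}_N$.

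You depart from the paper only where it cites external results.
\begin{itemize}
\item Instead of Farnsteiner's Proposition~1.1 (Lemma~\ref{lem:4.2}), you show directly that only finitely many $\phi_{\bm r}$ are nonzero. This uses finite generation together with $Z(\mathcal{H}_N')=0$.
\item Instead of the Su--Zhu completeness theorem, you restrict a derivation of $\mathcal{H}_N$ to $\mathcal{H}_N'$ and subtract an $\operatorname{ad}x$. You then use that the centralizer of $\mathcal{H}_N'$ in $\mathcal{H}_N$ is zero.
\end{itemize}
Both arguments are valid and make the proof self-contained, showing exactly where finite generation and centerlessness enter. The citations are shorter, and Su--Zhu applies to any centerless perfect Lie algebra.

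One caution concerns your nonzero-degree paragraph. The sentence ``I expect the derivation identity \dots\ to force this ratio to be independent of $\bm s$'' is not an argument. That constancy is the entire content of Lemma~\ref{lem:4.3}, so your proof is complete only because you invoke that lemma.

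The paper's mechanism is as follows. Write $[h_{\bm a},h_{\bm b}]=\omega(\bm a,\bm b)h_{\bm a+\bm b}$.
\begin{enumerate}
\item Subtract a multiple of $\operatorname{ad}h_{\bm r}$ so that the new derivation $D'$ kills $h_{\bm\varepsilon}$ for one $\bm\varepsilon$ with $\omega(\bm r,\bm\varepsilon)\neq0$.
\item The relation $[h_{-\bm\varepsilon},h_{\bm\varepsilon}]=0$ then forces $D'(h_{-\bm\varepsilon})=0$.
\item Hence $D'$ commutes with $T=\operatorname{ad}h_{-\bm\varepsilon}\operatorname{ad}h_{\bm\varepsilon}$, which acts on $h_{\bm s}$ by the scalar $-\omega(\bm\varepsilon,\bm s)^2$.
\item Since $D'$ shifts degree by $\bm r$, comparing eigenvalues gives $D'(h_{\bm s})=0$ unless $2\omega(\bm\varepsilon,\bm s)=-\omega(\bm\varepsilon,\bm r)$.
\item The remaining $h_{\bm s}$ are brackets of elements already killed.
\end{enumerate}
The paper first moves $\bm r$ to a multiple of $\bm Qe_1$ via Lemma~\ref{lem:2.1} and takes $\bm\varepsilon=\bm Qe_{m+1}$. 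That normalization is only a convenience.
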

     In Section 5, we first show that the second cohomology group $H^{2}(L;\mathbb{K})$ of any $\mathbb{Z}^{N}$-graded Lie algebra is the direct product of its graded components in Lemma~\ref{lem:5.2}. The graded components of $H^{2}(\mathcal{H}_{N};\mathbb{K})$ are explicitly described in Corollary~\ref{cor:5.6} and Lemma~\ref{lem:5.7}. The second cohomology group of $\mathcal{H}_{N}$ with trivial coefficients $H^{2}(\mathcal{H}_{N}; \mathbb{K})$ is characterized in Theorem~\ref{thm:5.8}. We now state the third of our main theorem.
 \begin{theorem*}[C] Let $N\geq 2$ be even, then $H^{2}(\mathcal{H}_{N};\mathbb{K})\cong \Lambda^{2}(\mathbb{K}^{N})\oplus \mathbb{K}^{N}$, where $\Lambda^{2}(\mathbb{K}^{N})$ denotes the second exterior power of $\mathbb{K}^{N}$.
 \end{theorem*}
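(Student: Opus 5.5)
We prove Theorem (C) by computing $H^2(\mathcal{H}_N;\mathbb{K})$ degree by degree. We use the $\mathbb{Z}^N$-grading $\mathcal{H}_N=\mathfrak{h}\oplus\bigoplus_{\bm r\neq 0}\mathbb{K}h_{\bm r}$, with
\[
[h_{\bm r},h_{\bm s}]=(\bm r^{\top}\bm J\bm s)\,h_{\bm r+\bm s},\qquad [d,h_{\bm r}]=(d\mid\bm r)\,h_{\bm r}\quad (d\in\mathfrak{h}),
\]
up to the normalisations fixed in Section 2.

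\textbf{Reduction to degree zero.} Since $\mathfrak{h}$ acts semisimply with integral eigenvalues, $H^2$ is the direct product of its graded components $H^2_{\bm m}$. For $\bm m\neq 0$, pick $d\in\mathfrak{h}$ with $(d\mid\bm m)\neq 0$. Cartan's homotopy formula $L_d=\iota_d\delta+\delta\iota_d$ shows that $L_d$ acts as $-(d\mid\bm m)\neq 0$ on degree-$\bm m$ cochains, yet induces zero on cohomology. Hence $H^2_{\bm m}=0$, and every class has a representative $\varphi$ of degree $\bm 0$. Concretely, $\varphi$ is determined by three pieces of data:
\begin{itemize}
\item an alternating form $\varphi|_{\mathfrak{h}\times\mathfrak{h}}\in\Lambda^2(\mathfrak{h})^*$;
\item the values $\varphi(d,h_{\bm 0})$, which are only relevant if $h_{\bm 0}$ is present;
\item a function $f(\bm r)=\varphi(h_{\bm r},h_{-\bm r})$ with $f(-\bm r)=-f(\bm r)$.
\end{itemize}
Every other pairing has nonzero total degree, so it vanishes.

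\textbf{Cocycle conditions.} For $d,d'\in\mathfrak{h}$ and $x,y,z\in\mathcal{H}_N$, the cocycle identity is
\[
\varphi([x,y],z)+\varphi([y,z],x)+\varphi([z,x],y)=0 .
\]
Applied to the triple $(d,d',x)$ it is automatic, because $[\mathfrak{h},\mathfrak{h}]=0$ and degree-zero pairings of $\mathfrak{h}$ with $h_{\bm r}$, $\bm r\neq0$, vanish. Applied to $(d,h_{\bm r},h_{-\bm r})$, it gives
\[
(\bm r^{\top}\bm J(-\bm r))\,\varphi(d,h_{\bm 0})=0,
\]
which is trivial since $\bm r^{\top}\bm J\bm r=0$; so there is no constraint on the $\mathfrak{h}$–$\mathfrak{h}$ and $\mathfrak{h}$–$h_{\bm0}$ parts. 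Applied to $(h_{\bm r},h_{\bm s},h_{-\bm r-\bm s})$, it gives the functional equation
\[
(\bm r^{\top}\bm J\bm s)\bigl(f(\bm r+\bm s)-f(\bm r)-f(\bm s)\bigr)=0 \qquad\text{whenever } \bm r,\bm s,\bm r+\bm s\neq 0,
\]
where I am writing this up to sign conventions that need to be checked against Section 2.

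\textbf{Coboundaries.} For a degree-zero 1-cochain $\psi$ with $\psi(h_{\bm 0})=c$ and $\psi|_{\mathfrak{h}}$ arbitrary,
\[
\delta\psi(h_{\bm r},h_{-\bm r})=-\psi([h_{\bm r},h_{-\bm r}])=0 .
\]
On the $\mathfrak{h}$ parts, $\delta\psi$ vanishes as well, and it sends $(d,h_{\bm 0})$ to $\psi(0)=0$. So, up to sign conventions, there are essentially no coboundaries: the degree-zero cohomology equals the space of cocycles.

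\textbf{Main obstacle: solving the functional equation.} Given the equation for $f$, I will show that $f$ is additive, and hence linear, $f(\bm r)=(\mu\mid\bm r)$ for some $\mu\in\mathbb{K}^N$. The plan has three steps.
\begin{enumerate}
\item For non-collinear $\bm r,\bm s$ with $\bm r^{\top}\bm J\bm s\neq 0$, the equation gives additivity $f(\bm r+\bm s)=f(\bm r)+f(\bm s)$ directly.
\item For pairs with $\bm r^{\top}\bm J\bm s=0$, I will route through an auxiliary vector $\bm t$ pairing nontrivially with $\bm r$, $\bm s$, $\bm r+\bm s$ and their partial sums. Such a $\bm t$ exists because $\bm J$ is nondegenerate and $\mathbb{Z}^N$ is not a finite union of proper hyperplanes.
\item Multiples $k\bm r$ are handled the same way, by decomposing $k\bm r=(k\bm r-\bm t)+\bm t$ with $\bm t$ chosen generically.
\end{enumerate}
This combinatorial step is where care is needed, especially for $N=2$, where $\bm r^{\top}\bm J\bm s$ is a determinant; I would first verify the case $N=2$ separately.

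Collecting the contributions then gives $\Lambda^2(\mathbb{K}^N)$ from the $\mathfrak{h}\times\mathfrak{h}$ part and $\mathbb{K}^N$ from the linear functions $f$. It remains to check whether the $\varphi(d,h_{\bm0})$ data contributes. By the paper's decomposition $\mathcal{H}_N=\mathcal{H}_N'\rtimes\mathfrak{h}$ with $\mathcal{H}_N'$ simple, the constant component should be absent from $\mathcal{H}_N'$, so this data is vacuous. The result is $H^2(\mathcal{H}_N;\mathbb{K})\cong\Lambda^2(\mathbb{K}^N)\oplus\mathbb{K}^N$.
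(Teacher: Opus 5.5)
Your proof is correct. In degree $\bm 0$ it is the paper's argument (Proposition~5.3, Lemma~5.5): the same functional equation $(\overline{\bm r},\bm s)\bigl(f(\bm r+\bm s)-f(\bm r)-f(\bm s)\bigr)=0$, which is exactly (34), and the same detour through an auxiliary $\bm t$ avoiding three hyperplanes. Your observation that $\delta$ kills every degree-zero $1$-cochain, because $[\mathcal{H}_N,\mathcal{H}_N]\cap\mathfrak{h}=0$, is cleaner than Lemma~5.5. That lemma treats $\psi_1$ and $\psi_2$ separately, whereas your observation shows at once that no nonzero element of $V_1\oplus V_2$ is a coboundary.

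The genuine difference is in nonzero degree $\bm k$. The paper (Lemma~5.7) writes down $f=-k_i^{-1}\psi(d_i,\cdot)$ on $(\mathcal{H}_N)_{\bm k}$ and checks $df=\psi$ pair by pair. This needs separate cases $k_j=0$ and $k_j\neq 0$, and the cocycle identity on $(d_i,h_{\bm r},h_{\bm s})$ and $(d_i,d_j,h_{\bm k})$. That primitive is exactly $-k_i^{-1}\iota_{d_i}\psi$, the one the Cartan formula $L_d=\iota_d\delta+\delta\iota_d$ produces. The formula holds on the nose with the paper's differential $(d_1f)(x,y)=-f([x,y])$, and $L_d$ acts by $-(d\mid\bm m)$ in degree $\bm m$. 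So your route is the conceptual form of the same computation and needs no case split.

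In both versions the primitives must be glued over infinitely many degrees. This is legitimate because a $1$-cochain can be prescribed independently on each $(\mathcal{H}_N)_{\bm m}$ and $\delta$ acts degreewise, which is what the product decomposition (Lemma~5.2) encodes.

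Several hedges can be removed:
\begin{itemize}
\item Your $\bm r^{\top}\bm J\bm s$ equals $-(\overline{\bm r},\bm s)$, but the sign factors out of the functional equation.
\item The $\varphi(d,h_{\bm 0})$ data is vacuous simply because $h_{\bm 0}=D(\overline{\bm 0},\bm 0)=0$, so $(\mathcal{H}_N)_{\bm 0}=\mathfrak{h}$; you do not need to appeal to simplicity.
\item For the triple $(d,h_{\bm r},h_{-\bm r})$, the real reason there is no constraint is that the outer terms $(d\mid\bm r)f(\bm r)$ and $-(d\mid\bm r)f(\bm r)$ cancel. The middle term vanishes because $h_{\bm 0}=0$.
\item Your step~3 is redundant, since collinear pairs already satisfy $(\overline{\bm r},\bm s)=0$ and fall under step~2. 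The case $N=2$ needs no separate treatment.
\item State the converse explicitly: every skew form on $\mathfrak{h}$ and every linear $f$ does give a cocycle. This follows because your list of triples is exhaustive.
\end{itemize}
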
    
 
\section{Preliminaries}
\label{sec:background}
Let $\mathbb{N},\,\mathbb{Z}, \mathbb{Z}_{+},\mathbb{Z}^{*},\,\mathbb{K}$ denote the sets of positive integers, integers, non-negative integers, nonzero integers and an algebraically closed field of characteristic zero. \par 
We fix a positive integer $N\geq 2$. Let $A_{N}= \mathbb{K}[t_{1}^{\pm{1}},\ldots, t_{N}^{\pm {1}}]$ be the commutative algebra of Laurent polynomials over $\mathbb{K}$ and $\mathcal{W}_{N}$ be the Lie algebra of all derivations of $A_{N}$, known as the Witt algebra. Let $d_{i}$ denote the $i$-th degree degree derivation $t_{i}\frac{\partial}{\partial t_{i}}$ for every $1\leq i \leq N$ and $t^{\bm{n}}=t_{1}^{n_{1}}t_{2}^{n_{2}}\ldots t_{N}^{n_{N}}$ for $\bm{n}=(n_{1},\ldots,n_{N})^{\top}\in \mathbb{Z}^{N}$. Also, denote by $e_{i}$ the column vectors in $\mathbb{Z}^{N}$ such that $(e_{i})_{j}=\begin{cases}
    1, \; \text{if} \; i=j \\
    0, \; \text{otherwise}
\end{cases}$.

\par 
Define the bilinear form $\bm{(}\cdot \,, \cdot\bm{)}$ on $\mathbb{K}^{N}$  by $\left(e_{i},e_{j} \right)=\delta_{ij}$, and extending bilinearly to $\mathbb{K}^{N}$. For $u=(u_{1},\ldots, u_{N})^{\top}\in \mathbb{K}^{N}$ and $\bm{r}=(r_{1},\ldots,r_{N})^{\top}\in \mathbb{Z}^{N}$, let $D(u,\bm{r})$ denote the derivation $\sum_{i=1}^{N}u_{i}t^{\bm{r}}d_{i}$. Then the Witt algebra $\mathcal{W}_{N}$ is a $\mathbb{Z}^{N}$-graded Lie algebra with the grading given by
\begin{equation*}
    \mathcal{W}_{N}=\bigoplus_{\bm{r}\in \mathbb{Z}^{N}}(\mathcal{W}_{N})_{\bm{r}},
\end{equation*}  
where $(\mathcal{W}_{N})_{\bm{r}}=\text{span}_{\mathbb{K}}\{ D(u,\bm{r})\mid u\in \mathbb{K}^{N}\}$ for every $\bm{r}\in \mathbb{Z}^{N}$. Then commutator relations on $\mathcal{W}_{N}$ are given by
\begin{equation*}
    [D(u,\bm{r}),D(v,\bm{s})]=D(w,\bm{r}+\bm{s}), \; u,v \in \mathbb{K}^{N},\; \bm{r},\bm{s}\in \mathbb{Z}^{N}.
\end{equation*}
where $w=\bm{(}u,\bm{s}\bm{)}v-\bm{(}v,\bm{r}\bm{)}u$. The abelian subalgebra $\mathfrak{h}=\text{span}_{\mathbb{K}}\{d_{i}\mid 1\leq i \leq N\}=\{D(u,0)\mid u\in \mathbb{K}^{N}\}$ is a Cartan subalgebra of $\mathcal{W}_{N}$. The Witt algebra $\mathcal{W}_{N}$ is naturally isomorphic to the Lie algebra of vector fields on the $N$-dimensional torus $(\mathbb{K}^{\times})^{N}$. Then it has an important subalgebra, the Lie algebra of divergence-zero vector fields $\mathcal{S}_{N}$, given by
\begin{equation*}
    \mathcal{S}_{N}=\bigoplus_{\bm{r}\in \mathbb{Z}^{N}}(\mathcal{S}_{N})_{\bm{r}}
\end{equation*}
where $(\mathcal{S}_{N})_{\bm{r}}=\text{span}_{\mathbb{K}}\{ D(u,\bm{r})\mid u\in \mathbb{K}^{N} \; \text{with} \; \bm{(}u,\bm{r}\bm{)}=0\}$. It is easy to see that $\mathcal{S}_{N}$ contains the Cartan subalgebra $\mathfrak{h}$.\par
 For $N=2m$ for some $ m\in \mathbb{N}$, the Lie algebras $\mathcal{W}_{N}$ as well as $\mathcal{S}_{N}$ has yet another important subalgebra known as the Lie algebra of Hamiltonian vector fields on $N$-dimensional torus $\mathcal{H}_{N}$. For the defining $\mathcal{H}_{N}$, we first require some notations. \par
 Let $\bm{J}=
 \begin{pmatrix}
     \bm{O}_{m}  & \bm{I}_{m} \\
    -\bm{I}_{m}  &\: \bm{O}_{m}
 \end{pmatrix}$,
 where $I_{m}$ denotes identity matrix of order $m$ and $O_{m}$ the zero matrix of order $m$. Then for any $\bm{r}=(r_{1},r_{2},\ldots,r_{m},r_{m+1},\ldots, r_{2m})^{\top}\in \mathbb{Z}^{N}$, we denote by $\overline{\bm{r}}$ the vector $J\bm{r}=(r_{m+1},r_{m+2},\ldots,r_{2m},-r_{1},-r_{2},\ldots,-r_{m})^{\top}$. Note that $\operatorname{char}(\mathbb{K})=0$, therefore $\mathbb{Z}$ is canonically embedded in the field $\mathbb{K}$. Therefore, we  identify $\mathbb{Z}^{N}$ with $\bigoplus_{i=1}^{N}\mathbb{Z}e_{i} $ in $\mathbb{K}^{N}$. Similarly, $\mathbb{Q}$ is canonically embedded in $\mathbb{K}$.  Let us denoted by $h_{\bm{r}}$ the derivations of $A_{N}$ of the form $D(\overline{\bm{r}},\bm{r})$. Then the Hamiltonian Lie algebra is defined as \begin{equation*}
     \mathcal{H}_{N}=\text{span}_{\mathbb{K}}\{h_{\bm{r}}\mid \bm{r}\in \mathbb{Z}^{N}\}\rtimes \mathfrak{h}
 \end{equation*}
 Note that the Hamiltonian algebra is not simple, whereas its derived subalgebra $\mathcal{H}_{N}'=[\mathcal{H}_{N},\mathcal{H}_{N}]$ is. Also, we note that \begin{equation*}
     \mathcal{H}_{N}= \bigoplus_{\bm{r}\in \mathbb{Z}^{N}}(\mathcal{H}_{N})_{\bm{r}}=\mathcal{H}_{N}'\rtimes \mathfrak{h}, \;\; \;  \mathcal{H}_{N}'=\bigoplus_{\bm{r}\in \mathbb{Z}^{N}\setminus\{\bm{0}\}}(\mathcal{H}_{N})_{\bm{r}}=\bigoplus_{\bm{r}\in \mathbb{Z}^{N}}(\mathcal{H}_{N}')_{\bm{r}},
 \end{equation*}
 where $(\mathcal{H}_{N})_{\bm{r}}=(\mathcal{H}_{N}')_{\bm{r}}=\mathbb{K}h_{\bm{r}}$ for $\bm{r}\neq 0$, $(\mathcal{H}_{N})_{\bm{0}}=\mathfrak{h}$ and $(\mathcal{H}_{N}')_{\bm{0}}=0$. For the later use, we record the following; 
 \begin{equation*}
     [h_{\bm{r}},h_{\bm{s}}]=(\overline{\bm{r}},\bm{s})h_{\bm{r}+\bm{s}},
 \end{equation*}
 where $\bm{r},\bm{s}\in \mathbb{Z}^{N}$ and $(\overline{\bm{r}},\bm{s})=\sum_{i=1}^{m}r_{m+i}s_{i}-\sum_{i=1}^{m}r_{i}s_{m+i}$.
 \par 
 Now we recall the following groups which are important for later purpose. \\
 For $N=2m$ with $m\in \mathbb{N}$, let $\bm{GL}_{N}(\mathbb{Z})$ denotes the group of all $ N\times N$ integer matrices with determinant $\pm 1$. The symplectic group over the integers is defined as 
 \begin{equation*}
     \bm{Sp}_{N}(\mathbb{Z})=\{\bm{Q} \in \bm{GL}_{N}(\mathbb{Z}) \mid \bm{Q}^{\top}\bm{J}\bm{Q}=\bm{J} \},
 \end{equation*}
 where $\bm{J}=
 \begin{pmatrix}
     \bm{O} & \bm{I}_{m} \\
     -\bm{I}_{m} & \bm{O}
 \end{pmatrix}.$
 \par 
 The conformal symplectic group over $\mathbb{Z}$ is defined as
 \begin{equation*}
     \bm{GSp}_{N}(\mathbb{Z})=\{\bm{Q}\in \bm{GL}_{N}(\mathbb{Z}) \mid   \bm{Q}^{\top}\bm{J}\bm{Q}=\lambda(Q)\bm{J} \; \text{for some}\; \lambda(Q)\in \mathbb{Z}^{\times}=\{-1,1\}\}.
 \end{equation*}
 See \cite{MT11}, where they have defined the conformal symplectic group of any field $k$. \\
 We note that $\bm{GSp}_{N}(\mathbb{Z})\cong \bm{Sp}_{N}(\mathbb{Z})\rtimes \mathbb{Z}/2\mathbb{Z} $. Since $\lambda : \bm{GSp}_{N}(\mathbb{Z})\to \mathbb{Z}/2\mathbb{Z}$ defined by
 \begin{equation*}
     \lambda(\bm{A})=
     \begin{cases}
         0 & \text{if} \; \bm{A} \in \bm{Sp}_{N}(\mathbb{Z}) \\
         1 & \text{if} \; \bm{A} \notin \bm{Sp}_{N}(\mathbb{Z})
     \end{cases}
 \end{equation*}
 is a group homomorphism. The following sequence is split exact.  
 \begin{equation*}
     1 \longrightarrow \bm{Sp}_{N}(\mathbb{Z}) \xrightarrow{i} \bm{GSp}_{N}(\mathbb{Z})\xrightarrow{\lambda} \mathbb{Z}/2\mathbb{Z}  \longrightarrow 1
 \end{equation*}
 hence, we have $\bm{GSp_{N}}(\mathbb{Z})\cong \bm{Sp}_{N}(\mathbb{Z})\rtimes \mathbb{Z}/2\mathbb{Z}$. \\  
  Recall that $\bm{r}=(r_{1},\ldots,r_{N})^{\top}\in \mathbb{Z}^{N} $ is said to be primitive if $\gcd(r_{1},\ldots,r_{N})=1$. The following Lemma is due to Yves Benoist.
  \begin{lemma}\label{lem:2.1}{[[Ben25], Lemma 5.3.]} The group $\bm{Sp}_{N}(\mathbb{Z})$ acts transitively on the set of primitive vectors in $\mathbb{Z}^{N}$.
  \end{lemma}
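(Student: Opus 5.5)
We need to prove that $\bm{Sp}_{N}(\mathbb{Z})$ acts transitively on the primitive vectors of $\mathbb{Z}^{N}$. The plan is to show that every primitive vector $\bm{r}$ can be carried to $e_{1}$ by an element of $\bm{Sp}_{N}(\mathbb{Z})$. First note that $\bm{Sp}_{N}(\mathbb{Z})$ preserves primitivity, since it lies in $\bm{GL}_{N}(\mathbb{Z})$ and so maps the content ideal $\gcd(r_1,\ldots,r_N)\mathbb{Z}$ to itself. Transitivity then follows by composing $\bm{Q}_{\bm{s}}^{-1}\bm{Q}_{\bm{r}}$, where $\bm{Q}_{\bm{r}}\bm{r}=e_1$ and $\bm{Q}_{\bm{s}}\bm{s}=e_1$.

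To move $\bm{r}$ to $e_{1}$ I would run a Euclidean algorithm using explicit symplectic elementary matrices. Write $\bm{r}=(\bm{a},\bm{b})$ with $\bm{a},\bm{b}\in\mathbb{Z}^{m}$. The following matrices lie in $\bm{Sp}_{N}(\mathbb{Z})$:
\begin{itemize}
\item the transvections $\begin{pmatrix}\bm{I}&\bm{S}\\ \bm{O}&\bm{I}\end{pmatrix}$ and $\begin{pmatrix}\bm{I}&\bm{O}\\ \bm{S}&\bm{I}\end{pmatrix}$ for $\bm{S}$ integer symmetric;
\item the block matrices $\begin{pmatrix}\bm{A}&\bm{O}\\ \bm{O}&(\bm{A}^{\top})^{-1}\end{pmatrix}$ for $\bm{A}\in\bm{GL}_{m}(\mathbb{Z})$;
\item the matrix $\bm{J}$ itself, which swaps $\bm{a}$ and $\bm{b}$ up to sign.
\end{itemize}
Each can be checked directly against $\bm{Q}^{\top}\bm{J}\bm{Q}=\bm{J}$.

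\emph{Step 1.} Using the block matrices with $\bm{A}\in\bm{GL}_m(\mathbb{Z})$, apply the classical fact that $\bm{GL}_m(\mathbb{Z})$ carries any vector to $(\gcd,0,\ldots,0)$. This brings $\bm{a}$ to $(\alpha,0,\ldots,0)$ with $\alpha=\gcd(\bm{a})$. The vector $\bm{b}$ changes to $\bm{A}^{-\top}\bm{b}$, which is still an integer vector.

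\emph{Step 2.} Apply the lower transvection with $\bm{S}=\operatorname{diag}(s,0,\ldots,0)$, which adds $s\alpha$ to $b_{1}$, and the upper one, which adds $s b_{1}$ to $a_{1}$. Alternating these runs the Euclidean algorithm on the pair $(a_{1},b_{1})$, which lies in a symplectic $2\times 2$ block with $\bm{SL}_{2}(\mathbb{Z})=\bm{Sp}_{2}(\mathbb{Z})$. This replaces $(a_1,b_1)$ by $(\gcd(a_1,b_1),0)$.

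\emph{Step 3.} Iterate. Apply $\bm{J}$, or a partial swap on a subset of coordinate pairs (also symplectic), to move the $\bm{b}$-entries into the $\bm{a}$-block. Then reapply Step 1 to fold them into $a_1$ via $\gcd$.

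Each round strictly decreases $\sum_i |b_i|$, or terminates with $\bm{b}=0$. At the end $\bm{r}$ has become $(g,0,\ldots,0)$ with $g=\gcd(r_1,\ldots,r_N)=1$. A sign flip, using $\bm{A}=-\bm{I}$ in the block form, fixes $g=1$, so $\bm{r}\mapsto e_1$.

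The main obstacle is Step 3: the interleaving must genuinely terminate and must not destroy previously zeroed coordinates. The cleanest fix I would try first is to avoid full swaps and instead use the partial swap acting only on the coordinate pair $(i,m+i)$, which is the $\bm{SL}_2(\mathbb{Z})$ matrix $\begin{pmatrix}0&1\\-1&0\end{pmatrix}$ embedded there. First kill each $b_i$ into $a_i$ by Step 2 on each pair $(a_i,b_i)$ separately, which is legitimate because each pair is an independent $\bm{Sp}_2$ block. This yields $\bm{b}=0$ in one pass. A single application of Step 1 then finishes. This ordering makes termination evident.
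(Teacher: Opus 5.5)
Your argument is correct. It differs from the paper simply because the paper gives no argument of its own: Lemma~\ref{lem:2.1} is quoted from Benoist [Ben25, Lemma 5.3] without proof. What you supply is a self-contained constructive proof.

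The version to keep is the one in your last paragraph. For each $i$, the transvections with $\bm{S}=s\,e_ie_i^{\top}$ act on the pair $(a_i,b_i)$ as the elementary generators of $\mathbf{SL}_2(\mathbb{Z})$ and fix every other coordinate. The Euclidean algorithm therefore sends $(a_i,b_i)$ to $(\gcd(a_i,b_i),0)$. After all $m$ pairs are processed, you have $\bm{b}=\bm{0}$ and $\bm{a}=(g_1,\dots,g_m)^{\top}$. This vector is primitive in $\mathbb{Z}^m$, since $\gcd(g_1,\dots,g_m)=\gcd(r_1,\dots,r_N)=1$. One block matrix $\operatorname{diag}(\bm{A},\bm{A}^{-\top})$ with $\bm{A}\bm{a}=e_1$ then finishes, and it leaves $\bm{b}=\bm{0}$ untouched.

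You should drop the earlier iterative scheme (Steps 1--3). Its termination claim, that $\sum_i|b_i|$ strictly decreases each round, is unsupported. Step 1 replaces $\bm{b}$ by $\bm{A}^{-\top}\bm{b}$, and this can increase that quantity. For example, $\bm{A}=\begin{pmatrix}1&0\\1&1\end{pmatrix}$ sends $\bm{b}=(0,1)^{\top}$ to $(-1,1)^{\top}$. The one-pass ordering makes this issue moot.

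Compared with the citation, your route costs a few lines. In exchange it is elementary, using only Euclid in $\mathbb{Z}^2$ and the classical reduction of a primitive vector by $\mathbf{GL}_m(\mathbb{Z})$. It also explicitly produces $\bm{Q}\in\mathbf{Sp}_N(\mathbb{Z})$ with $\bm{Q}e_1=\bm{r}$, as a product of symplectic transvections and block-diagonal matrices. That is exactly the form in which the lemma is used in Lemma~\ref{lem:4.3}.
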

Now we record the following facts in terms of lemma, which will be employed later in the paper.
\begin{lemma}\label{lem:2.2} Let $S\subset \mathcal{H}_{N}$ and $\langle S\rangle$ denotes the Lie subalgebra of $\mathcal{H}_{N}$ generated by $S$, then \begin{itemize}
    \item[(i)] If the $h_{\bm{r}},h_{\bm{s}}\in S$, then $h_{\bm{r}+\bm{s}}\in \langle S \rangle $ if $(\overline{\bm{r}},\bm{s})\neq 0$. 
    \item[(ii)] If $h_{\bm{r}}\in S $ and $h_{\bm{s}}\in \langle S \rangle $, then $h_{\bm{r}+\bm{s}}\in \langle S\rangle $ if $(\overline{\bm{r}},\bm{s})\neq 0$.
    \item[(iii)] If $h_{\bm{r}}, h_{\bm{s}}\in \langle S\rangle $, then $h_{\bm{r}+\bm{s}}\in \langle S\rangle $ if $(\overline{\bm{r}},\bm{s})\neq 0$.
\end{itemize}
\end{lemma}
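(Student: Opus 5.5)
All three parts follow directly from the bracket formula $[h_{\bm{r}},h_{\bm{s}}]=(\overline{\bm{r}},\bm{s})h_{\bm{r}+\bm{s}}$ together with the fact that $\langle S\rangle$ is a subspace closed under the Lie bracket. I will prove (iii) first, since (i) and (ii) are special cases of it: $S\subset\langle S\rangle$, so in (i) and (ii) both $h_{\bm{r}}$ and $h_{\bm{s}}$ lie in $\langle S\rangle$.

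For (iii), suppose $h_{\bm{r}},h_{\bm{s}}\in\langle S\rangle$. Since $\langle S\rangle$ is a Lie subalgebra, the bracket $[h_{\bm{r}},h_{\bm{s}}]$ lies in $\langle S\rangle$. By the recorded commutator relation this bracket equals $(\overline{\bm{r}},\bm{s})h_{\bm{r}+\bm{s}}$. Here $(\overline{\bm{r}},\bm{s})$ is an integer, viewed in $\mathbb{K}$ through the canonical embedding $\mathbb{Z}\hookrightarrow\mathbb{K}$, which is available because $\operatorname{char}\mathbb{K}=0$. If this integer is nonzero, it is therefore an invertible scalar in $\mathbb{K}$. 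Multiplying by $(\overline{\bm{r}},\bm{s})^{-1}$ keeps us inside the subspace $\langle S\rangle$, and gives $h_{\bm{r}+\bm{s}}\in\langle S\rangle$.

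One convention needs checking so that the formula applies uniformly. When $\bm{r}+\bm{s}=\bm{0}$, the formula is harmless because $h_{\bm{0}}=D(\overline{\bm{0}},\bm{0})=0$. Moreover, $(\overline{\bm{r}},-\bm{r})=0$ for every $\bm{r}$, so the hypothesis $(\overline{\bm{r}},\bm{s})\neq 0$ already excludes $\bm{s}=-\bm{r}$. Likewise, if $\bm{r}=\bm{0}$ or $\bm{s}=\bm{0}$ then the pairing vanishes, so those cases are also excluded by the hypothesis.

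There is no real obstacle here. The only points to state explicitly are the use of characteristic zero to invert the integer coefficient, and the observation that (i) and (ii) reduce to (iii) via $S\subset\langle S\rangle$.
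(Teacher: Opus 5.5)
Your proof is correct and matches the paper's, which also just cites the relation $[h_{\bm{r}},h_{\bm{s}}]=(\overline{\bm{r}},\bm{s})h_{\bm{r}+\bm{s}}$ together with closure of $\langle S\rangle$. Your reduction of (i) and (ii) to (iii) via $S\subset\langle S\rangle$, and your explicit inversion of the nonzero integer $(\overline{\bm{r}},\bm{s})$ in characteristic zero, fill in the details the paper leaves implicit; the paper's extra appeal to one-dimensionality of the graded components is not needed.
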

\begin{proof}
    Follows from Lie brackets \begin{equation*}
        [h_{\bm{r}},h_{\bm{s}}]=(\overline{\bm{r}},\bm{s})h_{\bm{r}+\bm{s}},
    \end{equation*}
    and the fact that the nonzero graded components of $\mathcal{H}_{N}'$ are one dimensional.
\end{proof}
For any $\{0\}\neq S $, we define $L_{S}=\{\bm{r}\in \mathbb{Z}^{N} \mid h_{\bm{r}}\in \langle S \rangle \}$, then by Lemma 2.2. $L_{S}$ has the property that if $\bm{r},\bm{s}\in L_{S}$ and $\left(\overline{\bm{r}},\bm{s}\right)\neq 0$, then $\bm{r}+\bm{s}\in L_{S}$. By the definition, $\bm{0}\in L_{S}$.

\medskip

 We prove the following important technical Proposition, which will be very helpful while computing the automorphism group.
 
\begin{proposition}\label{prop:2.3}
    If $ S=\{\pm e_{i}, \pm e_{j}\pm e_{k} \mid 1\leq i \leq N, 1\leq j < k \leq N\}$, then $L_{S}=\mathbb{Z}^{N}$.
\end{proposition}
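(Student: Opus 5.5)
The strategy is to realize every target vector as the endpoint of a finite path starting at a generator. Each step adds an element $\bm{g}$ of $S$ (or of $L_S$) to the current vector $\bm{r}\in L_S$, and we check at each step that $(\overline{\bm{r}},\bm{g})\neq 0$; Lemma~\ref{lem:2.2} then keeps the new vector in $L_S$. Write $i'$ for the symplectic partner of $i$, i.e. $i'=i+m$ if $i\le m$ and $i'=i-m$ if $i>m$. Directly from the formula for $(\overline{\bm{r}},\bm{s})$ we get
\begin{equation*}
(\overline{\bm{r}},\pm e_i)=\pm\varepsilon_i r_{i'},\qquad (\overline{\bm{r}},\pm e_j\pm e_k)=\pm\varepsilon_j r_{j'}\pm\varepsilon_k r_{k'},
\end{equation*}
with signs $\varepsilon_i\in\{\pm1\}$.

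The basic move is the following: if $\bm{r}\in L_S$ and $r_{i'}\neq 0$, then $\bm{r}\pm e_i\in L_S$. Iterating it, the coordinate $r_i$ can be moved to any integer value, passing through $0$ if necessary, as long as its partner $r_{i'}$ stays nonzero. For each symplectic pair $(i,i')$ this gives the rule for reaching a target pair $(a,b)\neq(0,0)$. Say $a\neq 0$; the case $a=0$ has $b\neq 0$ and is symmetric. Starting from a state with $r_{i'}\neq 0$, first set $r_i=a$ while $r_{i'}\neq0$, then set $r_{i'}=b$ while $r_i=a\neq 0$. Pairs are independent of each other for these moves.

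\textbf{Step 1 (all coordinates nonzero).} Start from $e_1\in S$ and add $e_{1'}$, which is allowed since the partner coordinate of $1'$ is $r_1=1\neq0$; now pair $1$ is fully nonzero. For another coordinate $k$ whose pair is still zero, add $e_1+e_k\in S$. The pairing is $\pm r_{1'}\pm r_{k'}=\pm r_{1'}\neq 0$ because $r_{k'}=0$. Then adjust $r_1$ back with the basic move (its partner $r_{1'}$ is nonzero) and make $r_{k'}$ nonzero (its partner $r_k=1$ is nonzero). Repeating this gives some $\bm{r}\in L_S$ with all coordinates nonzero.

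\textbf{Step 2 (zero pairs of the target).} Let $\bm{t}\neq\bm{0}$ be the target, since $\bm{0}\in L_S$ already. Fix a pair $j$ on which $\bm{t}$ is nonzero; it will serve as an anchor. For each pair $i$ with $(t_i,t_{i'})=(0,0)$, we kill it while all other coordinates stay nonzero.
\begin{itemize}
\item[(a)] Use the basic move to set $r_{i'}=0$ and $r_i=1$, which is allowed since $r_i\neq0$ at that moment.
\item[(b)] Add $-e_i-e_j\in S$. The pairing is $\pm r_{i'}\pm r_{j'}=\pm r_{j'}\neq 0$, and the result has $r_i=0$ and $r_{i'}=0$.
\item[(c)] Restore $r_j$ with $+e_j$, which is allowed since $r_{j'}\neq0$.
\end{itemize}
Pair $i$ is then zero and is never touched again.

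\textbf{Step 3 (remaining pairs).} For every remaining pair (each has nonzero target), the current state has both coordinates nonzero. Apply the two-stage rule above to reach $(t_i,t_{i'})$. These moves only involve coordinates of that pair, so the zeroed pairs are undisturbed, and we arrive at $\bm{t}\in L_S$.

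I expect the main obstacle to be Step 2, namely producing pairs that are entirely zero. The moves by $\pm e_i$ alone can never empty a pair, because the last coordinate of the pair can only be changed while its partner is nonzero. This is exactly where the mixed generators $\pm e_j\pm e_k$ from different symplectic pairs are needed. The bookkeeping to check is that the anchor pair $j$ keeps $r_{j'}\neq0$ throughout Step 2, which holds because $r_{j'}$ is never modified there. The case $N=2$ has only one pair, so it needs no Step 2 and needs only the first move of Step 1.
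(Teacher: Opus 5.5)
Your argument is correct, and it takes a different route from the paper.

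\textbf{The paper's route.} The paper works in three stages.
\begin{enumerate}
\item It builds every sign vector $\sum_i\epsilon_i e_i$ by chaining brackets with the mixed generators.
\item It pushes each coordinate away from zero in the direction of its sign, which gives $(\mathbb{Z}^{*})^{N}\subseteq L_S$. This is your basic move, but it is never allowed to cross zero, so all sign patterns from the first stage are needed as starting points.
\item It treats an arbitrary $\bm{r}\neq\bm{0}$ by a difference trick. It writes $\bm{r}=(\bm{r}+\bm{s})+(-\bm{s})$ with $\bm{s},\bm{r}+\bm{s}\in(\mathbb{Z}^{*})^{N}$ and $(\overline{\bm{r}},\bm{s})\neq 0$. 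Zero coordinates therefore arise from bracketing two elements of $\langle S\rangle$ that are generally not generators.
\end{enumerate}

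\textbf{Your route.} You let the basic move cross zero, using the symplectic partner as the witness. So a single all-nonzero vector suffices in Step~1. You then create zero coordinates explicitly, one symplectic pair at a time, with the generators $-e_i-e_j$ and an anchor pair.

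\textbf{What your version buys.} Every step brackets with an element of $S$. Each $h_{\bm{t}}$ therefore comes out as an explicit left-normed iterated bracket of generators. You use only a handful of the mixed generators. You also isolate exactly why they are needed: moves by $\pm e_i$ alone can never empty a symplectic pair.

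\textbf{What the paper's version buys.} The zero-coordinate case is dispatched in one line. The decomposition $\bm{s}=\bm{s}'+\bm{s}''$ with $\bm{s}',\bm{s}''\in(\mathbb{Z}^{*})^{N}$ and $(\overline{\bm{s}'},\bm{s}'')\neq 0$ is reused later in Lemma~\ref{lem:4.3}. On the other hand, the paper tacitly needs such an $\bm{s}$ to exist, which amounts to avoiding finitely many proper affine hyperplanes in $\mathbb{Z}^{N}$, and it does not justify this. Its sign-vector induction is also more intricate than your Step~1.

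\textbf{A small presentational point.} In Step~2(a) the order matters. First set $r_i=1$, which is allowed because $r_{i'}\neq 0$. Then set $r_{i'}=0$, which is allowed because $r_i=1$. Your justification mentions only the second condition.
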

\begin{proof}
    \textbf{Claim 1:}. Every vector of the form $\sum_{i=1}^{N}\epsilon_{i}e_{i}$ with $\epsilon_{i}\in \{\pm 1\}$ lies in $L_{S}$. 
    %For any $\epsilon_{i}\in \{-1,1\}$, $\sum_{i=1}^{N}\epsilon_{i}e_{i}\in L_{S} $. 

    \medskip 

     For $m=1$, we are already done. \\
      For $m=2$, if $\bm{r}_{1}=\epsilon_{1}e_{1}+\epsilon_{2}e_{2}$, then $\left(\overline{\bm{r}_{1}}, \epsilon_{3}e_{3}\right)= -\epsilon_{1}\epsilon_{3}\neq 0$, by Lemma 2.2, we have $\bm{r}_{2}=\epsilon_{1}e_{1}+\epsilon_{2}e_{2}+\epsilon_{3}e_{3}\in L_{S}$. Note that $\left(\overline{\bm{r}_{3}},\epsilon_{4}e_{4}\right)=-\epsilon_{2}\epsilon_{4}\neq 0$, therefore , we have $\epsilon_{1}e_{1}+\epsilon_{2}e_{2}+\epsilon_{3}e_{3}+\epsilon_{4}e_{4}\in L_{S}$. \\
    
    For $m>2$, if $\bm{r}_{1}=\epsilon_{1}e_{1}+\epsilon_{2}e_{2}$ and $\bm{r}_{2}= \epsilon_{3}e_{3}+\epsilon_{m+1}e_{m+1} \in L_{S} $ then we have $(\overline{\bm{r}_{1}},\bm{r}_{2})=-\epsilon_{1}\epsilon_{m+1}\neq 0$,  and hence it follows by Lemma~\ref{lem:2.2} that $\bm{r}_{1}+\bm{r}_{2}=\epsilon_{1}e_{1}+\epsilon_{2}e_{2}+\epsilon_{3}e_{3}+\epsilon_{m+1}e_{m+1}\in L_{S}$. 

\medskip
    
    Now if we choose $\bm{r}_{3}=\epsilon_{4}e_{4}+\epsilon_{m+2}e_{m+2}$, then we have $(\overline{\bm{r}_{1}+\bm{r}_{2}},\bm{r}_{3}) = -\epsilon_{2}e_{m+2}\neq  0$, and again by Lemma~\ref{lem:2.2} we see that $\bm{r}_{1}+\bm{r}_{2}+\bm{r}_{3}= \sum_{i=1}^{4}\epsilon_{i}e_{i} + \sum_{i=1}^{2}\epsilon_{m+i}e_{m+i} \in L_{S} $.
    
    \medskip
    
    Assume as the induction hypothesis that for any  $k\in \{3,\ldots, m-1\}$, we have $\bm{r}=\sum_{i=1}^{k}\epsilon_{i}e_{i} + \sum_{i=1}^{k-2}\epsilon_{m+i}e_{m+i} \in L_{S} $. If $\bm{s}= \epsilon_{k+1}e_{k+1}+ \epsilon_{m+k-1}e_{m+k-1}$, then $\left( \overline{\bm{r}},\bm{s}\right) = -e_{k-1}e_{m+k-1}\neq 0$ and again by Lemma~\ref{lem:2.2} gives that $\bm{r}+\bm{s}= \sum_{i=1}^{k+1}\epsilon_{i}e_{i}+\sum_{i=1}^{(k+1)-2}\epsilon_{i}e_{i} \in L_{S}$. It follows by induction that \[\sum_{i=1}^{m}\epsilon_{i}e_{i}+\sum_{i=1}^{m-2}\epsilon_{m+i}e_{m+i} =\sum_{i=1}^{2m-2}\epsilon_{i}e_{i}\in L_{S}.\] \\
    Now if $\bm{r}'=\sum_{i=1}^{2m-2}\epsilon_{i}e_{i}$ and $\bm{s}'=\epsilon_{2m-1}e_{2m-1}$, then  $\left(\overline{\bm{r}'},\bm{s}' \right)=-\epsilon_{m-1}\epsilon_{2m-1}\neq 0$  and hence Lemma~\ref{lem:2.2} implies that $\bm{r}'+\bm{s}'=\sum_{i=1}^{2m-1}\epsilon_{i}e_{i}\in L_{S}$. Similarly $\bm{t}=\epsilon_{2m}e_{2m}\in L_{S}$ and $ \bm{r}'+\bm{s}'\in L_{S}$ and $(\overline{\bm{r}'+\bm{s}'},\bm{t})=-\epsilon_{m}\epsilon_{2m}\neq 0$ implies $ \bm{r}' +\bm{s}'+\bm{t}=\sum_{i=1}^{2m}\epsilon_{i}e_{i}\in L_{S}$. 

    \medskip
    
    \textbf{Claim 2: } We have $(\mathbb{Z}^{*})^{N}\subseteq L_{S}$, 
    in other words for any $\epsilon_{i}\in \{-1,1\}$ we will show that $ \sum_{i=1}^{N}k_{i}\epsilon_{i}e_{i}$ lies in $L_S$ for every $(k_{1},\ldots,k_{N})^{\top}\in (\mathbb{Z}_{+})^{N}$.

    \medskip
    
    It is already shown in Claim 1 that $\sum_{i=1}^{N}\epsilon_{i}e_{i}\in L_{S}$. Now we consider $\bm{k}_{1}=\sum_{i=1}^{N}\epsilon_{i}e_{i}$, then we have  $\bm{k_1}, \epsilon_{1}e_{1}\in L_{S}$ and $\left(\overline{\bm{k}_{1}}, \epsilon_{1}e_{1}\right)\neq 0$ implies \[\bm{k}_{2}= 2\epsilon_{1}e_{1} +\sum_{i=2}^{N}\epsilon_{i}e_{i}\in L_{S}.\] 
    
     Using induction hypothesis we assume that for  ${i}\in \{2,\ldots,r_{1}-1\}$, we have 

\begin{equation*}
 \bm{k}_{i} =i\epsilon_{1}e_{1} + \sum_{i=2}^{N}\epsilon_{i}e_{i}  \in L_{S}  \end{equation*}
 and we show that this is also true for $i+1$. 

\medskip
 
        Observe that $\left(\overline{\bm{k}_{i}}, \epsilon_{1}e_{1} \right)\neq 0$, $\epsilon_{1}e_{1},\bm{k}_{i}\in L_{S}$, therefore Lemma~\ref{lem:2.2} along with the induction hypothesis gives $ k_{i+1}= (i+1)\epsilon_{1}e_{1} +\sum_{i=2}^{N}\epsilon_{i}e_{i}\in L_{S}$. Now by induction we have \[\bm{k}_{r_{1}}=r_{1}\epsilon_{1}e_{1} + \sum_{i=2}^{N}\epsilon_{i}e_{i}\in L_{S}.\]
For $m=1$, We have $\left(\overline{\bm{k}_{r_{1}}}, \epsilon_{2}e_{2} \right)= -r_{1}\epsilon_{1}\epsilon_{2}\neq 0$ and Lemma~\ref{lem:2.2} implies $\bm{k}_{r_{1},2}= r_{1}\epsilon_{1}e_{1}+ 2\epsilon_{2}e_{2}\in L_{S}$. Inductively, we deduce that $\bm{k}_{r_{1},r_{2}}=r_{1}\epsilon_{1}e_{1}+r_{2}\epsilon_{2}e_{2}\in L_{S}$. \\
    For $m\geq 2$, we note that $(\overline{\bm{k}_{r_{1}}},\epsilon_{2}e_{2})=\epsilon_{2}\epsilon_{m+2}\neq 0 $, therefore Lemma~\ref{lem:2.2} gives that $\bm{k}_{r_{1},2}= r_{1}\epsilon_{1}e_{1} + 2\epsilon_{2}e_{2} + \sum_{i=3}^{N}\epsilon_{i}e_{i}\in L_{S}$. Repeating this process as above, we obtain
        \begin{equation*}
            \bm{k}_{r_{1},r_{2}}= r_{1}\epsilon_{1}e_{1}+ r_{2}\epsilon_{2}e_{2}+\sum_{i=3}^{N}\epsilon_{i}e_{i}\in L_{S}.
        \end{equation*}
      For $m=2$, this means that $ r_{1}\epsilon_{1}e_{1}+r_{2}\epsilon_{2}e_{2}+\sum_{i=m+1}^{N}\epsilon_{i}e_{i}$. We prove the same for any $m>2$.\\
        For $m>2$, we assume  by induction hypothesis that for any $j \in \{2,\ldots,m-1\}$, 
        \[\bm{k}_{r_{1}.r_{2},\ldots,r_{j}}= \sum_{i=1}^{j}r_{i}\epsilon_{i}e_{i}+ \sum_{i=j+1}^{N}\epsilon_{i}e_{i} \in L_{S}.\] Since $\left(k_{r_{1},\ldots,r_{j}},\epsilon_{j+1}e_{j+1} \right) = \epsilon_{j+1}\epsilon_{m+j+1}\neq 0$ and the above method can be repeatedly used to obtain $\bm{k}_{r_{1},\ldots,r_{j+1}}= \sum_{i=1}^{j+1}r_{i}\epsilon_{i}e_{i} + \sum_{i=j+2}^{N}\epsilon_{i}e_{i} \in L_{S}$. By induction, we have \begin{equation*}
            \bm{k}_{r_{1},\ldots, r_{m}}=\sum_{i=1}^{m}r_{i}\epsilon_{i}e_{i} + \sum_{j=m+1}^{N}\epsilon_{i}e_{i}\in L_{S}.
        \end{equation*}

       In each case, we see that  $\left(\overline{\bm{k}_{r_{1},\ldots, r_{m}}}, \epsilon_{m+1}e_{m+1} \right)=-r_{1}\epsilon_{1}\epsilon_{m+1}\neq 0$, therefore Lemma~\ref{lem:2.2} implies $ \sum_{i=1}^{m}r_{i}\epsilon_{i}e_{i}+ 2 \epsilon_{m+1}e_{m+1} + \sum_{i=m+2}^{N}\epsilon_{i}e_{i} \in L_{S}$ and repeating this method as above yields $\sum_{i=1}^{m+1}r_{i}\epsilon_{i}e_{i} + \sum_{i=m+2}^{N}\epsilon_{i}e_{i} \in L_{S}$. Finally, as above we use induction to obtain \begin{equation*}
            \bm{k}_{r_{1},r_{2},\ldots ,r_{N}}= \sum_{i=1}^{N}r_{i}\epsilon_{i}e_{i} \in L_{S}.
        \end{equation*}
        \textbf{Claim 3:} We have $\mathbb{Z}^{N}\setminus\{\bm{0}\}\subseteq L_{S}$.\\
        Let $\bm{r}\in \mathbb{Z}^{N}\setminus\{\bm{0}\}$, then we can choose $\bm{s}\in (\mathbb{Z}^{*})^{N}$ with $\bm{r}+\bm{s}\in (\mathbb{Z}^{*})^{N}$ and $(\overline{\bm{r}},\bm{s})\neq 0$. By Claim 2, we have $\bm{r}+\bm{s} ,-\bm{s} \in L_{S}$ and since $\left(\overline{\bm{r}+\bm{s}}, -\bm{s}\right)=-\left( \overline{\bm{r}},\bm{s}\right)\neq 0$, Lemma 2.2 yields $\bm{r}+\bm{s}-\bm{s}=\bm{r}\in L_{S}$.
\end{proof}
 The following proposition is due to Prof. Rao (private communication), as it seems to be unpublished, we provide a proof of it.
\begin{proposition}\label{prop:2.4} \cite{Rao}.
The derived subalgebra of Hamiltonian Lie algebra $\mathcal{H}_N$ is a simple Lie algebra.
\end{proposition}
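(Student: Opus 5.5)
The plan is to prove simplicity of $\mathcal{H}_N'=\bigoplus_{\bm r\neq \bm 0}\mathbb{K}h_{\bm r}$ by the standard graded argument. First we reduce a nonzero ideal to one containing a homogeneous element. Then we use Proposition~\ref{prop:2.3} to generate everything. We must also check that $\mathcal{H}_N'$ is non-abelian, which is immediate: for instance $[h_{e_1},h_{e_{m+1}}]=(\overline{e_1},e_{m+1})h_{e_1+e_{m+1}}=-h_{e_1+e_{m+1}}\neq 0$.

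\textbf{Step 1 (homogeneous element).} Let $I\neq 0$ be an ideal of $\mathcal{H}_N'$. Pick $0\neq x=\sum_{\bm r\in F}a_{\bm r}h_{\bm r}\in I$ with support $F\subset\mathbb{Z}^N\setminus\{\bm 0\}$ of minimal cardinality, and suppose $|F|\geq 2$. Since $\mathfrak h$ is not available inside $\mathcal{H}_N'$, we cannot diagonalize by $\operatorname{ad}d_i$ directly. Instead we bracket with $h_{\bm s}$ for suitable $\bm s$: $[h_{\bm s},x]=\sum a_{\bm r}(\overline{\bm s},\bm r)h_{\bm r+\bm s}$ has support $F+\bm s$ of the same size, with coefficients multiplied by $(\overline{\bm s},\bm r)$. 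To kill terms, use the combination $[h_{\bm s},[h_{-\bm s},x]]$ and similar combinations. Alternatively, note that for $\bm r_0\neq\bm r_1\in F$ there is $\bm s$ with $(\overline{\bm s},\bm r_1)=0\neq(\overline{\bm s},\bm r_0)$, because $\bm r_0,\bm r_1$ are distinct and, if they are not proportional, the linear functionals $\bm s\mapsto(\overline{\bm s},\bm r_i)$ are independent. Then $[h_{\bm s},x]\in I$ is nonzero and has strictly smaller support, which contradicts minimality. The delicate case is when $\bm r_1$ is a multiple of $\bm r_0$. Here I would instead use the element $\mathrm{ad}(h_{\bm s})\mathrm{ad}(h_{-\bm s})x$, whose coefficient on $h_{\bm r}$ is $(\overline{\bm s},\bm r)(\overline{-\bm s},\bm r+\bm s)=-(\overline{\bm s},\bm r)^2$. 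Subtracting a suitable scalar multiple of $x$ gives coefficients $a_{\bm r}\big((\overline{\bm s},\bm r)^2-c\big)$. Choosing $\bm s$ with $(\overline{\bm s},\bm r_0)^2\neq(\overline{\bm s},\bm r_1)^2$ and $c=(\overline{\bm s},\bm r_1)^2$ kills the $\bm r_1$ term but not the $\bm r_0$ term. Such $\bm s$ exists unless $\bm r_1=-\bm r_0$. That last case, together with choosing $\bm s$ so the result stays nonzero, is the main obstacle. For it, I would first shift by some $h_{\bm t}$ with $(\overline{\bm t},\bm r_0)\neq 0$, so that the support becomes $\{\bm r_0+\bm t,-\bm r_0+\bm t\}$, which is no longer antipodal and has nonzero coefficients. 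Then I would apply the previous argument. Thus $I$ contains some $h_{\bm r}$ with $\bm r\neq\bm 0$.

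\textbf{Step 2 (primitive vector and transitivity).} From $h_{\bm r}\in I$ with $\bm r=d\bm p$, $\bm p$ primitive, bracket with $h_{\bm u}$ where $(\overline{\bm r},\bm u)\neq 0$ to obtain $h_{\bm r+\bm u}\in I$. Choosing $\bm u$ so that $\bm r+\bm u$ is primitive (e.g.\ adjust one coordinate) gives a primitive vector in the grading support of $I$. Next, the map $h_{\bm r}\mapsto h_{\bm Q\bm r}$ for $\bm Q\in\bm{Sp}_N(\mathbb{Z})$ is an automorphism of $\mathcal{H}_N'$, since $(\overline{\bm Q\bm r},\bm Q\bm s)=(\bm J\bm Q\bm r,\bm Q\bm s)=(\overline{\bm r},\bm s)$. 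However, an ideal is not automatically stable under automorphisms, so rather than invoking Lemma~\ref{lem:2.1} I would argue directly. The set $L=\{\bm r: h_{\bm r}\in I\}\cup\{\bm 0\}$ satisfies: if $\bm r\in L$ and $(\overline{\bm r},\bm s)\neq 0$, then $\bm r+\bm s\in L$ for every $\bm s$, because $I$ is an ideal.

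\textbf{Step 3 (all of $\mathcal{H}_N'$).} Given $\bm r\in L\setminus\{\bm 0\}$ and any target $\bm t\neq\bm 0$, if $(\overline{\bm r},\bm t-\bm r)\neq0$, that is, $(\overline{\bm r},\bm t)\neq 0$, then $\bm t\in L$ directly. Otherwise pick an intermediate $\bm w$ with $(\overline{\bm r},\bm w)\neq0$ and $(\overline{\bm w},\bm t)\neq0$; such $\bm w$ exists since both conditions avoid proper subspaces (hyperplanes) of $\mathbb{Z}^N$. Then $\bm w\in L$ and hence $\bm t\in L$. So $L=\mathbb{Z}^N$ and $I=\mathcal{H}_N'$. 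In this argument Step 3 is easy, and Proposition~\ref{prop:2.3} offers an alternative finish once the $\pm e_i,\pm e_j\pm e_k$ are reached. The real work is the support-reduction in Step 1.
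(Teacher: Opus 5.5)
Your proof is correct and is essentially the paper's argument. You kill one term of a non-collinear support by bracketing with an $h_{\bm s}$ for which $(\overline{\bm s},\cdot)$ vanishes on one degree but not another, move a collinear support off its line with one bracket by some $h_{\bm t}$, and then reach any $h_{\bm t}$ from a single $h_{\bm r}\in I$ in at most two brackets. The only differences are inessential: the paper uses the shift for every collinear support, so your $\operatorname{ad}(h_{\bm s})\operatorname{ad}(h_{-\bm s})$ device is not needed; your Step 2 on primitive vectors is never used; and your non-abelian check is a point the paper leaves implicit.
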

\begin{proof}
    Let $I$ be a nonzero ideal of $\mathcal{H}_{N}'$, then there exists a nonzero element  $X=\sum_{i=1}^{n}a_{i}h_{\bm{r}_{i}}\in I$, where $a_{i}\in \mathbb{K}\setminus\{0\}$ and $\bm{r}_{i}\in \mathbb{Z}^{N}\setminus\{\bm{0}\}$. There are two possible cases, namely, $\bm{r}_{j}\notin \mathbb{Q}\bm{r}_{1}$ for at least one $j \in \{2,\ldots, n\}$ or $\bm{r}_{j}\in \mathbb{Q}\bm{r}_{1}$ for every $j\in \{2,\ldots,n\}$. 

    \medskip 
    
    \textbf{Case 1:} If $\bm{r}_{j}\notin \mathbb{Q}\bm{r}_{1}$ for some $i\in \{2,\ldots,n\}$, then we choose $\bm{s}\in \mathbb{Z}^{N}\setminus\{\bm{0}\}$ such that $(\overline{\bm{s}},\bm{r}_{1})\neq 0$ and $(\overline{\bm{s}},\bm{r}_{i})=0$. Consider $[h_{\bm{s}},X]= \sum_{i=1}^{n}a_{i}(\overline{\bm{s}},\bm{r}_{i})h_{\bm{s}+\bm{r}_{i}}\in I$. Therefore, we obtain a nonzero element in $I$ which can be written as the linear combination of at most $n-1$ elements of the form $h_{\bm{s}+\bm{r}}$. 

    \medskip 
    
    \textbf{Case 2:} If $\bm{r}_{j}\in \mathbb{Q}\bm{r}_{1}$ for every $j\in \{2,\ldots,n\}$, then we can find $\bm{s}\in \mathbb{Z}^{N}\setminus\{\bm{0}\}$ such $(\overline{\bm{s}},\bm{r}_{1})\neq 0$. We have $[h_{\bm{s}},X]=\sum_{i=1}^{N}a_{i}(\overline{\bm{s}},\bm{r}_{i})h_{\bm{s}+\bm{r}_{i}}$. Note that $\bm{s}+\bm{r}_{i}\notin \mathbb{Q}(\bm{s}+\bm{r}_{1})$, therefore, by the same argument as used in Case 1, we obtain a nonzero element in $I$, which can be written as the linear combination of at most $n-1$ elements of the form $h_{\bm{r}}$. 
\medskip 
    
    In each case, we obtain a nonzero element of the form $\sum_{i=1}^{n-1}b_{i}h_{\bm{s}_{i}}$ with $b_{1}\neq 0$. If $b_{i}=0$ for all $i \in \{2,\ldots, n-1\}$, then we get that $h_{\bm{s}_{1}}\in I$. Otherwise, the above methods can be repeated several times to obtain an element of the form $h_{\bm{r}}\in  I$, as in each step, we obtain a nonzero elements as a finite series with one less term.  Thus we obtain $h_{\bm{r}}\in I$.

    \medskip
    
    Next we show that $h_{\bm{s}}\in I$ for all $\bm{s}\in \mathbb{Z}^{N}\setminus\{\bm{0}\}$. \\
    If $(\overline{\bm{s}},\bm{r})\neq 0$, then we have $[h_{\bm{s}-\bm{r}},h_{\bm{r}}]= (\overline{\bm{r}},\bm{s})h_{\bm{s}}\in I$.\\
    If $(\overline{\bm{s}},\bm{r})=0$, then we choose $\bm{t}\in \mathbb{Z}^{N}\setminus\{\bm{0}\}$ with $(\overline{\bm{t}},\bm{s})\neq 0$ and $(\overline{\bm{t}},\bm{s})\neq 0
    $. Then $[h_{\bm{t-\bm{r}}},h_{\bm{r}}]= (\overline{\bm{t}},\bm{r})h_{\bm{t}}\in I$ implies $h_{\bm{t}}\in I$. Consider $[h_{\bm{s}-\bm{t}},h_{\bm{t}}]=(\overline{\bm{s}},\bm{t})h_{\bm{s}}\in I$, and therefore, we have $h_{\bm{s}}\in I$ for all $s\in \Z^N\setminus\{0\}$. \\
    It follows that $I=\mathcal{H}_{N}'$.
\end{proof}
\begin{lemma}\label{lem:2.5}
    If f and g are linear functionals on a finite dimensional space over field $\mathbb{K}$ such that $\operatorname{ker}(f)=\operatorname{ker}(g)$, then there exists a nonzero $c\in \mathbb{K}$ such that $f=cg$.
\end{lemma}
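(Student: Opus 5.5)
Note first that the statement as written needs one extra hypothesis. If $f=0$ and $g=0$ on a nonzero space, then for every $c\in\mathbb{K}$ we have $f=cg$, so a nonzero $c$ exists. But if only one of them vanishes, the kernels cannot agree: the zero functional has kernel $V$, while a nonzero functional has a proper kernel. So the degenerate case is harmless, and we reduce to the case where both $f$ and $g$ are nonzero, with common kernel $K=\operatorname{ker}(f)=\operatorname{ker}(g)$.

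The plan is to pick a vector $v_0\in V$ with $g(v_0)\neq 0$. Such a vector exists because $g\neq 0$. Set
\[
c=\frac{f(v_0)}{g(v_0)}.
\]
Since $v_0\notin\operatorname{ker}(g)=\operatorname{ker}(f)$, we have $f(v_0)\neq 0$, hence $c\neq 0$.

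Next, consider the functional $f-cg$. It vanishes on $K$, because both $f$ and $g$ vanish there. It also vanishes at $v_0$, by the choice of $c$.

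It remains to see that $K$ and $v_0$ together span $V$. By rank--nullity, $K$ has codimension one in $V$, since $g\neq 0$ has one-dimensional image $\mathbb{K}$. As $v_0\notin K$, we get $V=K\oplus\mathbb{K}v_0$. Concretely, every $v\in V$ can be written as
\[
v=\Bigl(v-\tfrac{g(v)}{g(v_0)}v_0\Bigr)+\tfrac{g(v)}{g(v_0)}v_0,
\]
where the first summand lies in $K$. This decomposition does not even use finite-dimensionality. Since $f-cg$ vanishes on both pieces, it vanishes on all of $V$, so $f=cg$.

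There is no real obstacle in this argument. The only points that need care are the choice of $v_0$ outside the common kernel and the remark that $c\neq 0$ follows from the equality of kernels.
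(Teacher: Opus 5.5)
Your proof is correct. The paper states this lemma without proof, as a standard fact of linear algebra, so there is no argument of theirs to compare against; choosing $v_0\notin\ker g$, setting $c=f(v_0)/g(v_0)$, and using $V=\ker g\oplus\mathbb{K}v_0$ is exactly the standard proof.

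One correction: your opening claim that the statement ``needs one extra hypothesis'' is wrong, and your own next sentences show why. If $f=g=0$ then $c=1$ works, and if exactly one of them vanishes the kernels differ. So the lemma holds exactly as stated, and that sentence should be dropped.

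Your remark that finite-dimensionality is not used is worth keeping. In Lemma~\ref{lem:3.1} the paper applies this lemma to the integer-valued functionals $\omega(\bm{r},-)$ and $(\bm{r},-)_f$ on $\mathbb{Z}^N$, not on a $\mathbb{K}$-vector space. Rewrite your decomposition as $g(v_0)v-g(v)v_0\in\ker g=\ker f$; this vector stays in $\mathbb{Z}^N$ because $g$ is integer-valued. It then gives $g(v_0)f(v)=f(v_0)g(v)$ directly on $\mathbb{Z}^N$, without first passing to $\mathbb{K}^N$.
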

 
\section{Automorphism groups of Hamiltonian Lie algberas}
\label{sec:main-section}
We denote by $\operatorname{Aut}(\mathcal{H}_{N}')$ and $\operatorname{Aut}(\mathcal{H}_{N})$ to denote the automorphism groups of $\mathcal{H}_{N}'$ and $\mathcal{H}_{N}$, respectively. For $N=2$, it is  known that $\mathcal{H}_{2}=\mathcal{S}_{2}$, and thus the groups $\operatorname{Aut}(\mathcal{H}_{2}')$ and $\operatorname{Aut}(\mathcal{H}_{2})$ are obtained in \cite{TX09}. We now proceed to find  $\operatorname{Aut}(\mathcal{H}_{N}')$ and $\operatorname{Aut}(\mathcal{H}_{N})$.
\begin{lemma}\label{lem:3.1}
    Suppose that $\sigma \in \operatorname{Aut}(\mathcal{H}_{N}')$ or $\operatorname{Aut}(\mathcal{H}_{N})$ be such that $\sigma(\mathcal{H}_{N}')=\mathcal{H}_{N}'$. If for every $\bm{r}\in \mathbb{Z}^{N}\setminus\{\bm{0}\}$ there is $\bm{s}\in \mathbb{Z}^{N}\setminus\{\bm{0}\}$ such that $\sigma(\mathcal{H}_{\bm{r}}')= \mathcal{H}_{\bm{s}}'$ and $\sigma(\mathcal{H}_{-\bm{r}}')=\mathcal{H}_{-\bm{s}}'$, then there exist $\lambda \in (\mathbb{K}^{\times})^{N}, \; \bm{Q}\in \bm{GSp}_{N}(\mathbb{Z})$ such that
    \begin{equation*}
        \sigma \left(D(\overline{\bm{r}},\bm{r})\right) = \begin{cases} \lambda^{\bm{r}}D\left(\bm{Q}^{-\top}\overline{\bm{r}}, \bm{Q}\bm{r}\right), & \text{if} \; \;  \bm{Q}^{\top}\bm{J}\bm{Q}= \bm{J}  \\
        (-1)^{|\bm{r}|}\lambda^{\bm{r}} D\left(\bm{Q}^{-\top}\overline{\bm{r}}, \bm{Q}\bm{r}\right),& \text{if}\; \; \bm{Q}^{\top}\bm{J}\bm{Q}= -\bm{J}  \end{cases}
    \end{equation*}\
    where $\bm{r}=(r_{1},\ldots,r_{N})^{\top}\in \mathbb{Z}^{N}\setminus\{\bm{0}\} $ with $|\bm{r}|=\sum_{i=1}^{N}r_{i}$.
\end{lemma}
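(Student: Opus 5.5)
Write $\sigma(h_{\bm r})=c_{\bm r}h_{\phi(\bm r)}$ for $\bm r\neq\bm 0$, with $c_{\bm r}\in\mathbb{K}^{\times}$ and $\phi:\mathbb{Z}^N\setminus\{\bm 0\}\to\mathbb{Z}^N\setminus\{\bm 0\}$. This is possible by the hypothesis together with the one-dimensionality of the nonzero graded components. Set $\phi(\bm 0)=\bm 0$; the hypothesis gives $\phi(-\bm r)=-\phi(\bm r)$. Since $\sigma$ restricted to $\mathcal{H}_N'$ is bijective and each graded piece is one-dimensional, $\phi$ is a bijection.

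Applying $\sigma$ to $[h_{\bm r},h_{\bm s}]=(\overline{\bm r},\bm s)h_{\bm r+\bm s}$ gives
\[
c_{\bm r}c_{\bm s}(\overline{\phi(\bm r)},\phi(\bm s))\,h_{\phi(\bm r)+\phi(\bm s)}=(\overline{\bm r},\bm s)\,c_{\bm r+\bm s}\,h_{\phi(\bm r+\bm s)}
\]
whenever $\bm r+\bm s\neq\bm 0$. It follows that $(\overline{\bm r},\bm s)\neq0$ if and only if $(\overline{\phi(\bm r)},\phi(\bm s))\neq0$, and in that case $\phi(\bm r+\bm s)=\phi(\bm r)+\phi(\bm s)$.

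\textbf{Step 1: $\phi$ is additive.} For arbitrary $\bm r,\bm s$, choose $\bm t$ with $(\overline{\bm t},\bm r)$, $(\overline{\bm t},\bm s)$, $(\overline{\bm r+\bm t},\bm s)$ all nonzero; such $\bm t$ avoids finitely many hyperplanes. We also arrange that $(\overline{\bm r},\bm s)$ is handled by passing through $\bm t$:
\[
\phi(\bm r+\bm s)=\phi(\bm r+\bm t)+\phi(\bm s)-\phi(\bm t)=\phi(\bm r)+\phi(\bm s).
\]
Here $\phi(\bm s-\bm t)=\phi(\bm s)-\phi(\bm t)$ comes from oddness and the nondegenerate case. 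Hence $\phi$ extends to a group automorphism of $\mathbb{Z}^N$, i.e.\ $\phi(\bm r)=\bm Q\bm r$ with $\bm Q\in\bm{GL}_N(\mathbb{Z})$.

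\textbf{Step 2: $\bm Q$ is conformally symplectic.} The bilinear forms $\bm s\mapsto(\overline{\bm r},\bm s)$ and $\bm s\mapsto(\overline{\bm Q\bm r},\bm Q\bm s)$ have the same zero sets on $\mathbb{Z}^N$, hence, after extending to $\mathbb{Q}^N$, the same kernels. By Lemma~\ref{lem:2.5} they are proportional, so $\bm Q^{\top}\bm J\bm Q=\mu\bm J$ for some $\mu\in\mathbb{Q}^{\times}$; the proportionality constant is independent of $\bm r$ by bilinearity. Taking determinants gives $\mu^N=1$, so $\mu=\pm1$ and $\bm Q\in\bm{GSp}_N(\mathbb{Z})$.

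\textbf{Step 3: determining the scalars.} Dividing the bracket identity by the bracket coefficient gives $c_{\bm r}c_{\bm s}\mu=c_{\bm r+\bm s}$ whenever $(\overline{\bm r},\bm s)\neq0$. Put $d_{\bm r}=\mu c_{\bm r}$; then $d_{\bm r+\bm s}=d_{\bm r}d_{\bm s}$ on nondegenerate pairs. By the same detour through an auxiliary $\bm t$ as in Step 1, with $d_{-\bm t}d_{\bm t}$ computed as $d_{\bm 0}$-like via $d_{\bm s}=d_{\bm s-\bm t}d_{\bm t}$, the map $d$ extends to a character of $\mathbb{Z}^N$. Thus $d_{\bm r}=\lambda^{\bm r}$ with $\lambda_i=d_{e_i}$.

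Finally I will match the normalization. Note that $D(\bm Q^{-\top}\overline{\bm r},\bm Q\bm r)$ equals $\mu^{-1}h_{\bm Q\bm r}$, since $\overline{\bm Q\bm r}=\bm J\bm Q\bm r=\mu\bm Q^{-\top}\bm J\bm r$. Therefore
\[
\sigma(h_{\bm r})=\mu^{-1}\lambda^{\bm r}h_{\bm Q\bm r}=\lambda^{\bm r}D(\bm Q^{-\top}\overline{\bm r},\bm Q\bm r)\cdot(\text{sign}).
\]
In the case $\mu=-1$, the remaining sign can be absorbed by replacing $\lambda$ with $-\lambda$, which produces the stated factor $(-1)^{|\bm r|}$. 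I will carefully track which convention yields the paper's form.

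The main obstacle is Step 1, and its analogue in Step 3: extending the relations from nondegenerate pairs to all pairs. This requires a careful choice of the auxiliary $\bm t$ so that every bracket used is nonzero.
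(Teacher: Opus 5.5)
Your proposal is correct and follows the paper's proof essentially step for step. The steps match: the degree map is additive on pairs with $(\overline{\bm r},\bm s)\neq 0$; this extends to all pairs through a generic auxiliary vector; Lemma~\ref{lem:2.5} gives $\bm Q^{\top}\bm J\bm Q=\mu\bm J$ with $\mu=\pm1$; and the scalars form a character. Your renormalization $d_{\bm r}=\mu c_{\bm r}$ merges the paper's two sign cases, and it shows there is no leftover sign: the factor $(-1)^{|\bm r|}$ is just the reparametrization $\lambda\mapsto-\lambda$.

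One slip needs fixing. In Step 1 the bracket you actually use is $[h_{\bm r+\bm t},h_{\bm s-\bm t}]$, so the third condition on $\bm t$ must be $(\overline{\bm r+\bm t},\bm s-\bm t)=(\overline{\bm r},\bm s)+(\overline{\bm t},\bm r+\bm s)\neq 0$, not $(\overline{\bm r+\bm t},\bm s)\neq 0$. For example, with $N=4$, $\bm r=e_1$, $\bm s=e_2$, the vector $\bm t=e_3-e_4$ meets your three conditions but not this one. The correct condition is still a hyperplane-avoidance condition when $\bm r+\bm s\neq\bm 0$, exactly as in the paper's choice of $\bm s'$, so the argument goes through; the same fix applies to the detour in Step 3.
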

\begin{proof}
    For every $\bm{r}\in \mathbb{Z}^{N}\setminus\{\bm{0}\}$, let $\bm{s}\in \mathbb{Z}^{N}\setminus\{\bm{0}\}$ satisfy the above hypothesis and define $\bm{s}=f(\bm{r})$. Since $\sigma$ is an automorphism of $\mathcal{H}_{N}'$ and each non-zero graded component of $\mathcal{H}_{N}'$ is one dimensional, the element $\bm{s}=f(\bm{r})$ is uniquely determined for every $\bm{r}\neq\bm{0}$. Hence $f:\mathbb{Z}^{N}\to\mathbb{Z}^{N}$ is bijective.

    \medskip 
    
    Now we will show that $f$ is a $\mathbb{Z}$-module homomorphism. \\
    For $\bm{r},\bm{s}\in \mathbb{Z}^{N}\setminus\{\bm{0}\}$ with $(\overline{\bm{r}},\bm{s})\neq 0$ we have
\[
\left[D(\overline{\bm{r}},\bm{r}), D(\overline{\bm{s}},\bm{s})\right]
= (\overline{\bm{r}},\bm{s})D\big(\overline{\bm{r}}+\overline{\bm{s}},\bm{r}+\bm{s}\big).
\]
Since we know each non-zero graded component is one-dimensional, it follows that
\[
[\mathcal{H}_{\bm{r}}',\mathcal{H}_{\bm{s}}'] = \mathcal{H}_{\bm{r}+\bm{s}}',
\qquad\text{whenever }(\overline{\bm{r}},\bm{s})\neq 0.
\]
Applying $\sigma$ to this equality yields
\[
[\mathcal{H}_{f(\bm{r})}',\mathcal{H}_{f(\bm{s})}']
= \mathcal{H}_{f(\bm{r}+\bm{s})}'.
\]
But the left-hand side equals $\mathcal{H}_{f(\bm{r})+f(\bm{s})}'$, hence
\[
f(\bm{r}+\bm{s}) = f(\bm{r}) + f(\bm{s}),
\qquad\text{whenever }(\overline{\bm{r}},\bm{s})\neq 0. \tag{1}\label{eq:1}
\]

\medskip

Now we extend~\eqref{eq:1} to the remaining case $(\overline{\bm{r}},\bm{s})=0$. First we prove negation compatibility.

Fix $\bm{r}\in\mathbb{Z}^{N}\setminus\{\bm{0}\}$ and choose $\bm{s}\in\mathbb{Z}^{N}\setminus\{\bm{0}\}$ with $(\overline{\bm{r}},\bm{s})\neq 0$. Then
\[
\bm{r} = (\bm{r}+\bm{s}) + (-\bm{s}),
\]
and applying~\eqref{eq:1} to the pair $(\bm{r}+\bm{s}, -\bm{s})$ (valid since $(\overline{\bm{r}+\bm{s}}, -\bm{s})=-(\overline{\bm{r}},\bm{s})\neq 0$) gives
\[
f(\bm{r}) = f(\bm{r}+\bm{s}) + f(-\bm{s}).
\]
Since $f(\bm{r}+\bm{s}) = f(\bm{r}) + f(\bm{s})$ by~\eqref{eq:1}, we obtain
\[
f(-\bm{s}) = -f(\bm{s}) \quad\text{for all }\bm{s}\neq 0. \tag{2}\label{eq:2}
\]
Now let $\bm{r},\bm{s}\in \mathbb{Z}^{N}\setminus\{\bm{0}\}$ be such that $\bm{r}+\bm{s}\neq 0$ and $(\overline{\bm{r}},\bm{s})=0$. Choose $\bm{s}'\in\mathbb{Z}^{N}$ with
\[
(\overline{\bm{r}},\bm{s}')\neq 0,\quad
(\overline{\bm{s}},\bm{s}')\neq 0,\quad
(\overline{\bm{r}}+\overline{\bm{s}},\bm{s}')\neq 0.
\]
Then
\begin{align}
f(\bm{r}+\bm{s})
&= f\big((\bm{r}+\bm{s}')+(-\bm{s}'+\bm{s})\big)\notag \\
&= f(\bm{r}+\bm{s}') + f(-\bm{s}'+\bm{s}) \notag\\
&= \big(f(\bm{r}) + f(\bm{s}')\big) + \big(-f(\bm{s}') + f(\bm{s})\big) \notag\\
&= f(\bm{r}) + f(\bm{s}).\tag{3}
\end{align}
Thus $f(\bm{r}+\bm{s}) = f(\bm{r}) + f(\bm{s})$ holds for all $\bm{r},\bm{s}\in\mathbb{Z}^{N}$. By induction,
\[
f(n\bm{r}) = nf(\bm{r}) \qquad \forall\, n\in\mathbb{Z},\ \bm{r}\in\mathbb{Z}^{N}. \tag{4}\label{eq:4}
\]
Hence  $f: \mathbb{Z}^{N}\to \mathbb{Z}^{N}$ is an isomorphism of the $\mathbb{Z}$-module $\mathbb{Z}^{N}$. Then the coefficients of $e_{j}$'s in the expression of $f(e_{i})$ as a linear combination of $e_{j}$'s determine an invertible matrix $\bm{Q}$ with entries in $\mathbb{Z}$ having determinant $\pm{1}$, i.e. $\bm{Q}\in \mathbf{GL}_{N}(\mathbb{Z})$ such that 
\[
f(\bm{r}) = \bm{Q}\bm{r}\quad\forall\,\bm{r}\in\mathbb{Z}^{N}. \tag{5}\label{eq:5}
\]
Note that each homogeneous component of $\mathcal{H}_{N}'$ of degree $\bm{r}\in \mathbb{Z}^{N}\setminus\{0\}$ is one dimensional. It follows from the definition of $f:\mathbb{Z}^{N}\to \mathbb{Z}^{N}$ and \eqref{eq:5} there exists $c_{\bm{r}}\in \mathbb{K}^{\times}$  such that \begin{equation*}
    \sigma(h_{\bm{r}})=\sigma(D(\overline{\bm{r}},\bm{r}))=c_{\bm{r}}D(\overline{\bm{Q}\bm{r}},\bm{Q}\bm{r})=c_{\bm{r}}h_{\bm{Q}\bm{r}}.
\end{equation*} 
We now show that $\bm{Q}\in\bm{GSp}_N(\mathbb{Z})$. For any $\bm{r},\bm{s}\in\mathbb{Z}^{N}\setminus\{\bm{0}\}$ we have
\[
\left[D(\overline{\bm{r}},\bm{r}),D(\overline{\bm{s}},\bm{s})\right]
= (\overline{\bm{r}},\bm{s})D(\overline{\bm{r}}+\overline{\bm{s}},\bm{r}+\bm{s}).
\]
Applying $\sigma$ and using~\eqref{eq:5}, we obtain scalars $c_{\bm{r}},c_{\bm{s}},c_{\bm{r}+\bm{s}}\in\mathbb{K}^\times$ such that
\[
c_{\bm{r}}c_{\bm{s}}\left(\overline{f(\bm{r})},f(\bm{s})\right)D\left(\overline{f(\bm{r})}+\overline{f(\bm{s})},f(\bm{r})+f(\bm{s})\right)
= (\overline{\bm{r}},\bm{s})c_{\bm{r}+\bm{s}}D\left(\overline{f(\bm{r})}+\overline{f(\bm{s})},f(\bm{r})+f(\bm{s})\right),
\]
hence
\[
c_{\bm{r}}c_{\bm{s}}\left(\overline{f(\bm{r})},f(\bm{s})\right)
= (\overline{\bm{r}},\bm{s})c_{\bm{r}+\bm{s}}. \tag{6}\label{eq:6}
\]
In particular,
\[
(\overline{\bm{r}},\bm{s}) = 0
\quad\Longleftrightarrow\quad
\left(\overline{f(\bm{r})},f(\bm{s})\right) = 0.
\]
Notice that $(,)_{f}$ defined by $(\bm{r},\bm{s})_{f}=\left(\overline{f(\bm{r})},f(\bm{s})\right)$ is again a bilinear form which by~\eqref{eq:6} equals $\displaystyle\frac{c_{\bm{r}+\bm{s}} }{c_{\bm{r}}c_{\bm{s}}}\left(\overline{\bm{r}},\bm{s}\right)$. \\
We show that $c_{\bm{r}+\bm{s}}/c_{\bm{r}}c_{\bm{s}}=c\in \mathbb{K}^{\times}$ for every $\bm{r},\bm{s}\in \mathbb{Z}^{N}\setminus\{\bm{0}\}$. For simplicity, we denote by $\omega(\bm{r},\bm{s})$ the expression $\left(\overline{\bm{r}},\bm{s}\right)$ for any $\bm{r},\bm{s}\in \mathbb{Z}^{N}\setminus\{\bm{0}\}$. \\
Note that for $\bm{r}\in \mathbb{Z}^{N}\setminus\{\bm{0}\}$ the linear functionals $\omega(\bm{r},-)$ and $(\bm{r},-)_{f}$ have the same kernel. By Lemma~\ref{lem:2.5}, the exists $b_{\bm{r}}\in \mathbb{K}^{\times}$ such that $(\overline{\bm{r}},\bm{s})=b_{\bm{r}}(\bm{r},\bm{s})_{f}=b_{\bm{r}}(\overline{f(\bm{r})},f(\bm{s}))$ for every $\bm{s}\in \mathbb{Z}^{N}$. \\
Again, for any $\bm{s}\in \mathbb{Z}^{N}\setminus\{\bm{0}\}$, the linear functionals $\omega(-,\bm{s})$ and $(-,\bm{s})_{f}$ have the same kernel, Consequently, by Lemma~\ref{lem:2.5}, there is $d_{\bm{s}}\in \mathbb{K}^{\times}$ such that  $(\overline{\bm{r}},\bm{s})=d_{\bm{s}}(\overline{\bm{r}},\bm{s})_{f}=d_{\bm{s}}(\overline{f(\bm{r})},f(\bm{s}))$ for every $\bm{r}\in \mathbb{Z}^{N}$. Since $\bm{r},\bm{s}\in \mathbb{Z}^{N}\setminus\{\bm{0}\}$ are arbitrary, it follows that $ b_{\bm{r}}=d_{s}=c$ for every $\bm{r},\bm{s}\in \mathbb{Z}^{N}\setminus\{\bm{0}\}$.

We therefore have,
\[
\left(\overline{f(\bm{r})},f(\bm{s})\right)
= c(\overline{\bm{r}},\bm{s}) \qquad\forall\,\bm{r},\bm{s} \in \mathbb{Z}^{N}\setminus\{\bm{0}\}. \tag{7}\label{eq:7}
\]
Writing~\eqref{eq:7} in matrix form gives
\[
\bm{Q}^{\top}\bm{J\bm Q} = c\bm{J}.
\]
 Taking the determinant on both sides of the above equation, it follows that $c^{2m}=1$ and hence $c=\pm1$. Consequently we get $\bm{Q}^{\top}\bm{JQ}=\pm \bm{J}$, so $\bm{Q} \in \bm{GSp}_{N}(\mathbb{Z})$.
 
 From the definition of $c_{\bm{r}}$ we have
 \[
 \sigma(D(\overline{\bm{r}}, \bm{r}))=c_{\bm{r}}D\left(\overline{\bm{Q}\bm {r}},\bm{Q}\bm{r}\right)=c_{\bm{r}}D(\bm{JQ}\bm{r},\bm{Q}\bm{r}).
  \]
 Using $\bm{Q}^{T}\bm{JQ}=\pm \bm{J}$ we may rewrite the image as
 \[\sigma(D(\overline{\bm{r}}, \bm{r}))=\pm c_{\bm{r}}D\left(\bm{Q}^{-\top}\bm{J}\bm{r},\bm{Q}\bm{r}\right)=\pm c_{\bm{r}}D\left(\bm{Q}^{-\top}\overline{\bm{r}},\bm{Q}\bm{r}\right),  \tag{8}\label{eq:8}\] with the sign $\pm$ determined by whether $\bm{Q}^{\top}\bm{JQ}=\bm{J}$ or $\bm{Q}^{\top}\bm{J}\bm{Q}=-\bm{J}$.

 \medskip
 
 \textbf{Case 1:} For $\bm{Q}\in \mathbf{Sp}_{N}(\mathbb{Z}) $, we show that the scalars $c_{\bm{r}}$, for any $\bm{r}\in \mathbb{Z}^{N}\setminus\{\bm{0}\},$ will be of the form $c_{\bm{r}}= \lambda^{\bm{r}}$ for some  $\lambda\in (\mathbb{K^{\times}})^{N} $. More precisely, we have \begin{equation*}
     \sigma(D(\overline{\bm{r}},\bm{r}))=\lambda^{\bm{r}}D(\bm{Q}^{-\top}\overline{\bm{r}}, \bm{Q}\bm{r}).
 \end{equation*}
 
 \textbf{Claim 1:}     $c_{\bm{r}+\bm{s}}=c_{\bm{r}}c_{\bm{s}}$, if $(\overline{\bm{r}},\bm{s})\neq 0$.

 For $\bm{r},\bm{s}\in \mathbb{Z}^{N}\setminus\{\bm{0}\}$, we have
 \begin{equation*}
    \left[D(\overline{\bm{r}},\bm{r}), D(\overline{\bm{s}},\bm{s})\right]=(\overline{\bm{r}},\bm{s})D(\overline{\bm{r}}+\overline{\bm{s}}, \bm{r}+\bm{s} ).
 \end{equation*}
 Applying $\sigma$ on both sides of the above equation, we obtain
 \begin{equation*}
    c_{\bm{r}}c_{\bm{s}}\left(Q^{-\top} \overline{\bm{r}}, Q\bm{s}\right)D\left(Q^{-\top}(\overline{\bm{r}}+\overline{\bm{s}}),Q(\bm{r}+\bm{s})\right)=c_{\bm{r}+\bm{s}}(\overline{\bm{r}},\bm{s})D\left(Q^{-\top}(\overline{\bm{r}}+\overline{\bm{s}}),Q(\bm{r}+\bm{s})\right),
 \end{equation*}
 which gives $c_{\bm{r}+\bm{s}}=c_{\bm{r}}c_{\bm{s}}$. \\
\textbf{Claim 2:}
    $c_{-\bm{s}}=c_{\bm{s}}^{-1}$, for all $\bm{s}\in \mathbb{Z}^{N}\setminus\{\bm{0}\}$.

    For any $\bm{0}\neq \bm{s}\in \mathbb{Z}^{N}$, we choose $\bm{r}\in \mathbb{Z}^{N}\setminus \{\bm{0}\}$ with $(\overline{\bm{r}},\bm{s})\neq 0$, then $c_{\bm{r}}=c_{\bm{r}+\bm{s}-\bm{s}}=c_{\bm{r}+\bm{s}}c_{-\bm{s}}= c_{\bm{r}}c_{\bm{s}}c_{-\bm{s}}$ and hence $c_{-\bm{s}}=c_{\bm{s}}^{-1}$. \\
\textbf{Claim 3:}
    $c_{\bm{r}+\bm{s}}=c_{\bm{r}}c_{\bm{s}}$ for all $\bm{r},\bm{s}\in \mathbb{Z}^{N}\setminus\{\bm{0}\}$ with $\bm{r}+\bm{s}\neq \bm{0}$.
If $(\overline{\bm{r}},\bm{s})\neq 0$, the result follows from Claim 1. therefore it is sufficient to prove for the case $\bm{r},\bm{s}\in \mathbb{Z}^{N}\setminus\{\bm{0}\}$ with $(\overline{\bm{r}},\bm{s})=0$. For such $\bm{r}$ and $\bm{s}$, we choose $\bm{t}$ such that $(\overline{\bm{r}},\bm{t})\neq 0,\; (\overline{\bm{s}},\bm{t})\neq 0$ and $\left(\overline{\bm{r}+\bm{s}},\bm{t}\right)\neq 0$. Then we see that    $c_{\bm{r}+\bm{s}}=c_{\bm{r}+\bm{t}-\bm{t}+\bm{s}}= c_{\bm{r}+\bm{t}}c_{-\bm{t}+\bm{s}}=c_{\bm{r}}c_{\bm{t}}c_{-\bm{t}}c_{\bm{s}}=c_{\bm{r}}c_{\bm{s}}$.

\medskip

We set $c_{e_{i}}:=\lambda_{i}\in \mathbb{K}^{\times}$ for every $i\in \{1,\ldots,N\}$. For an arbitrary $\bm{r}=(r_1,\ldots ,r_N)^{\top}\in \mathbb{Z}^N\setminus\{\bm{0}\}$. We have $\bm{r}=\sum_{i=1}^{N}r_{i}e_{i}$, then using Claim 1 and Claim 2, we have $c_{r_{i}e_{i}}=c_{e_{i}}^{r_{i}}=\lambda_{i}^{r_{i}}$ for $r_{i}\neq 0$. Repeatedly using Claim~3 yields $c_{\bm{r}}=\lambda_{1}^{r_{1}}\ldots\lambda_{N}^{r_{N}}$.

\medskip

Finally, for all $\bm{r}\in \mathbb{Z}^{N}\setminus\{\bm{0}\}$, substituting the value of $c_{\bm{r}}$ in  (\ref{eq:8}), we obtain 
    \begin{align*}
        \sigma\left(D(\overline{\bm{r}},\bm{r})\right)= \lambda^{\bm{r}}D(\bm{Q}^{-\top}\overline{\bm{r}},\bm{Q}\bm{r})
    \end{align*}
    when $\bm{JQ} = \bm{Q}^{-\top}\bm{J}$. In terms of $h_{\bm{r}}$, this means \begin{equation}
        \sigma(h_{\bm{r}})=\lambda^{\bm{r}}h_{\bm{Q}\bm{r}}, \; \text{when}\; \bm{J}\bm{Q}=\bm{Q}^{-\top}\bm{J}.
    \end{equation}
    
    \medskip
    
    \textbf{Case 2:} When $\bm{Q}\in \mathbf{GSp}_{N}$ is such that $\bm{Q}^{\top}\bm{J}=-\bm{J}\bm{Q}$, then $c_{\bm{r}}$ equals $(-1)^{|\bm{r}|-1}\lambda^{\bm{r}}$, where $|\bm{r}|=\sum_{i=1}^{N}r_{i}$, that is
    \begin{equation*}
        \sigma(D(\overline{\bm{r}},\bm{r}))=(-1)^{|\bm{r}|}\lambda^{\bm{r}}D(\bm{Q}^{-\top}\overline{\bm{r}},\bm{Q}\bm{r}).
    \end{equation*}
    In other words, \[ \sigma(h_{\bm{r}})=(-1)^{|\bm{r}|-1}\lambda^{\bm{r}}h_{Q\bm{r}}, \; \text{for}\; \bm{J}\bm{Q}=-\bm{Q}^{-\top}\bm{J}.\]
     \textbf{Claim 4:}
         $c_{\bm{r}+\bm{s}}=-c_{\bm{r}}c_{\bm{s}}$ for $\bm{r},\bm{s}\in \mathbb{Z}^{N}\setminus\{\bm{0}\}$ with $(\overline{\bm{r}},\bm{s})\neq 0$. \\
     
     Following the same steps as in Claim 1 and noting that $(\bm{Q}^{-\top}\overline{\bm{r}},\bm{Q}\bm{s})=-(\overline{\bm{r}},\bm{s})$, it follows that $c_{\bm{r}+\bm{s}}=-c_{\bm{r}}c_{\bm{s}}$. \\
     \textbf{Claim 5:}
         $c_{-\bm{s}}=c_{\bm{s}}^{-1}$.\\
     
     As done in Claim 2, we find $\bm{r}$ such that $(\overline{\bm{r}},\bm{s})\neq 0$. Now using Claim 4, it follows that
     \begin{equation*}
        c_{\bm{r}}=c_{\bm{r}+\bm{s}-\bm{s}}=-c_{\bm{r}+\bm{s}}c_{-\bm{s}}=c_{\bm{r}}c_{\bm{s}}c_{-\bm{s}}.
    \end{equation*}
    The claim therefore follows.
    \textbf{Claim 6:}
        $c_{\bm{r}+\bm{s}}=c_{\bm{r}}c_{\bm{s}}$ for all $\bm{r},\bm{s}\in \mathbb{Z}^{N}\setminus\{\bm{0}\}$ with $\bm{r}+\bm{s}\neq \bm{0}$.\\
    
    We choose $\bm{t}\in \mathbb{Z}^{N}\setminus\{\bm{0}\}$ such that $(\overline{\bm{r}},\bm{t})\neq 0,\; (\overline{\bm{s}},\bm{t})\neq 0$ and $(\overline{\bm{r}+\bm{s}},\bm{t})\neq 0$. Using Claim 4 and Claim 5, we see that
    \begin{equation*}
        c_{\bm{r}+\bm{s}}=c_{\bm{r}+\bm{t}+\bm{s}-\bm{t}}=-c_{\bm{r}+\bm{t}}c_{\bm{s}-\bm{t}}=-c_{\bm{r}}c_{\bm{t}}c_{\bm{s}}c_{-\bm{t}}=-c_{\bm{r}}c_{\bm{s}}.
    \end{equation*}
    Now let $\lambda_{i}=c_{e_{i}} $ for all $1\leq i \leq N$. We note that $c_{ne_{i}}=(-1)^{n-1}\lambda_{i}^{n}$. As for $n\in \mathbb{Z}_{+}$, we see that 
    $c_{ne_{i}}=-c_{(n-1)e_{i}}\lambda_{i} = \cdots =(-1)^{n-1}\lambda_{i}^{n}$. \\
    Finally, if $\bm{r}=(r_{1},\ldots,r_{n})^{\top}\in \mathbb{Z}^{N}\setminus\{\bm{0}\}$, then we see that $c_{\bm{r}}=(-1)^{|\bm{r}|-1}\lambda^{\bm{r}}$, where $\lambda =(\lambda_{1},\ldots,\lambda_{N})^{\top}\in (\mathbb{K}^{\times})^{N}$ and $\lambda_{i}=c_{e_{i}}$. \\
    We write $\bm{r}=\sum_{i=1}^{N}r_{i}e_{i}$, then we have \begin{align*}
        c_{\bm{r}}=(-1)^{n-1}c_{r_{1}e_{1}}\cdots c_{r_{N}e_{N}} &=(-1)^{N-1}(-1)^{\sum_{i=1}^{N}(r_{i}-1)}\lambda_{1}^{r_{1}}\cdots \lambda_{N}^{r_{N}} \\
        &= (-1)^{|\bm{r}|-1}\lambda^{\bm{r}}
    \end{align*}
    Therefore, putting this expression for $c_{\bm{r}}$ in $\sigma(D(\overline{\bm{r}},\bm{r}))=-c_{\bm{r}}D(\bm{Q}^{-\top}\overline{\bm{r}},\bm{Q}\bm{r})$, we obtain
    \begin{equation*}
        \sigma(D(\overline{\bm{r}},\bm{r}))=(-1)^{|\bm{r}|}\lambda^{\bm{r}}D(Q^{-\top}\overline{\bm{r}}, \bm{Q}\bm{r}).
    \end{equation*}
\end{proof} 

We consider the lexicographic ordering on the set $\mathbb{Z}^{N}$ denoted by  $\prec$.
Note that any element of $\mathcal{H}_{N}'$ can be written as $\sum_{i=1}^{n}a_{i}h_{\bm{r}^{(i)}}$ with $\bm{r}^{(1)} \prec \bm{r}^{(2)} \prec \cdots \prec \bm{r}^{(n)} $. In the following context, we shall always assume such an expression without specifying it.

\begin{lemma}\label{lem:3.2}
    Let $\sigma \in \mathrm{Aut}(\mathcal{H}_{N}')$, then for every $\bm{r}\in \mathbb{Z}^{N}\setminus\{\bm{0}\}$ there exists $\bm{s}\in \mathbb{Z}^{N}\setminus\{\bm{0}\}$ such that $\sigma((\mathcal{H}_{N}')_{\bm{r}})=(\mathcal{H}_{N}')_{\bm{s}}$ and $\sigma(({\mathcal{H}_{N}'})_{-\bm{r}})=(\mathcal{H}_{N}')_{-\bm{s}}$.
\end{lemma}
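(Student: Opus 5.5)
The approach is to characterize the homogeneous elements $h_{\bm r}$ of $\mathcal{H}_N'$ intrinsically, by centralizers and brackets, so that any automorphism must preserve them. The basic computation is this. For $x=\sum_i a_i h_{\bm r^{(i)}}$ and $y=\sum_j b_j h_{\bm s^{(j)}}$,
\[
[x,y]=\sum_{i,j}a_ib_j(\overline{\bm r^{(i)}},\bm s^{(j)})h_{\bm r^{(i)}+\bm s^{(j)}}.
\]
Hence the centralizer of a single element is
\[
C(h_{\bm r})=\operatorname{span}\{h_{\bm s}\mid (\overline{\bm r},\bm s)=0,\ \bm s\neq\bm 0\}.
\]
Since the form $(\overline{\cdot},\cdot)$ is nondegenerate over $\mathbb{Q}$, the double centralizer is
\[
C(C(h_{\bm r}))=\operatorname{span}\{h_{k\bm r_0}\mid k\in\mathbb{Z}^*\}=:L_{\bm r_0},
\]
where $\bm r_0$ is the primitive vector with $\bm r\in\mathbb{Z}\bm r_0$. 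I will show that the $L_{\bm r_0}$ are intrinsically characterized, for instance as the maximal members of the family $\{C(C(x))\mid 0\neq x\in\mathcal{H}_N'\}$. This requires computing $C(x)$ for a general $x$. Using the lexicographic order $\prec$, compare the largest and smallest terms of $x$ and of any $y\in C(x)$. These extreme products cannot cancel, which forces linear conditions on the degrees of $y$. From this I expect $C(C(x))\subseteq L_{\bm r_0}$ whenever the support of $x$ lies on the line $\mathbb{Q}\bm r_0$, and $C(C(x))$ to be no larger otherwise. Consequently $\sigma$ permutes the lines: $\sigma(L_{\bm r_0})=L_{\bm s_0}$.

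Next, I single out the homogeneous elements inside a line. For $0\neq x\in L_{\bm r_0}$ I use the criterion: $x$ is a scalar multiple of some $h_{k\bm r_0}$ if and only if there is a primitive $\bm t_0$ and some $0\neq y\in L_{\bm t_0}$ such that $[x,y]$ is a nonzero element lying in a single line $L_{\bm u_0}$. One direction is clear: take $y=h_{\bm t}$ with $(\overline{\bm r_0},\bm t)\neq0$.

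For the converse, suppose $x$ has at least two terms $h_{k\bm r_0}$, and let $y=\sum_l b_l h_{l\bm t_0}$. Look at the extreme terms of $[x,y]$, namely the coefficients at $k_{\max}\bm r_0+l_{\max}\bm t_0$ and $k_{\min}\bm r_0+l_{\min}\bm t_0$ (after choosing signs so that the relevant pairings are nonzero). These coefficients do not cancel, and the two degrees are not rational multiples of each other. The last point needs a small argument: if $\bm r_0$ and $\bm t_0$ are independent, collinearity of these two degrees would force $k_{\max}l_{\min}=k_{\min}l_{\max}$. I expect this exclusion of the non-homogeneous case to be the main obstacle. What I would try first is the leading/trailing-term argument above. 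If that fails, I would fall back to choosing $\bm t_0$ with $(\overline{\bm r_0},\bm t_0)\neq0$ and using that $(\overline{\bm r_0},k\bm r_0+l\bm t_0)=l(\overline{\bm r_0},\bm t_0)$ separates terms.

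This criterion is invariant under $\sigma$, so $\sigma$ maps each $\mathbb{K}h_{\bm r}$ onto some $\mathbb{K}h_{\bm s}$. This gives a bijection $k\mapsto k'$ of $\mathbb{Z}^*$ with $\sigma(\mathbb{K}h_{k\bm r_0})=\mathbb{K}h_{k'\bm s_0}$. To get $\sigma(\mathbb{K}h_{-\bm r})=\mathbb{K}h_{-\bm s}$, I bracket with a fixed $h_{\bm t}$ satisfying $(\overline{\bm r_0},\bm t)\neq0$, whose image is $\mathbb{K}h_{\bm t'}$. The bracket $[h_{k\bm r_0},h_{\bm t}]$ is a nonzero multiple of $h_{k\bm r_0+\bm t}$, so $\sigma$ sends $\mathbb{K}h_{k\bm r_0+\bm t}$ to $\mathbb{K}h_{k'\bm s_0+\bm t'}$. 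Applying the same reasoning to the pair $h_{k\bm r_0+\bm t}$ and $h_{-k\bm r_0}$, whose bracket lands in $\mathbb{K}h_{\bm t}$, gives $k'\bm s_0+\bm t'+(-k)'\bm s_0=\bm t'$. Therefore $(-k)'=-k'$, which is exactly the statement of the lemma.
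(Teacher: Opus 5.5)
Your first step has a genuine gap, and everything after it depends on that step. The lines $L_{\bm r_0}$ are not the maximal members of $\{C(C(x))\}$. Your expectation that $C(C(x))$ is no larger than a line when the support of $x$ is not collinear fails as soon as $N\ge 4$.

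Take $x=h_{e_1}+h_{e_2}$. Since $(\overline{e_i},\bm t)=-t_{m+i}$ for $i=1,2$, we have $(\overline{e_1},e_2)=0$, and both functionals are constant on each coset $\bm t_0+\mathbb{Z}(e_1-e_2)$. For $y=\sum c_{\bm t}h_{\bm t}$, put $c_k=c_{\bm t_0+k(e_1-e_2)}$, $\alpha=(\overline{e_1},\bm t_0)$ and $\beta=(\overline{e_2},\bm t_0)$. The coefficient of $[x,y]$ in degree $\bm t_0+e_1+k(e_1-e_2)$ is $\alpha c_k+\beta c_{k+1}$, and finite support forces all $c_k=0$ unless $\alpha=\beta=0$. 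Hence $C(x)=\operatorname{span}\{h_{\bm t}\mid t_{m+1}=t_{m+2}=0\}$ and $C(C(x))=\operatorname{span}\{h_{ae_1+be_2}\mid (a,b)\neq(0,0)\}$. This is an abelian subalgebra properly containing $L_{e_1}$.

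Your plan to compute $C(x)$ from extreme terms is also too weak. Extreme products can vanish separately while the inner terms cancel: $h_{2e_1}+2h_{e_1+e_{m+1}}+h_{2e_{m+1}}$, the image of $(t_1+t_{m+1})^2$, commutes with $h_{e_1}+h_{e_{m+1}}$. So you have no intrinsic description of the lines. Without one, neither ``$\sigma$ permutes the lines'' nor the $\sigma$-invariance of your homogeneity criterion is established.

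The remaining steps are sound, and easier than you feared. If $(\overline{\bm r_0},\bm t_0)\neq 0$, the degrees $k\bm r_0+l\bm t_0$ are pairwise distinct, so $[x,y]$ has no cancellation at all. Moreover, $k_1\bm r_0+l\bm t_0$ and $k_2\bm r_0+l\bm t_0$ are collinear only if $l(k_1-k_2)=0$. Your final sign argument is also correct; it is the same trick as (2) in the proof of Lemma~\ref{lem:3.1}.

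So the whole content of the lemma is that $\sigma$ maps homogeneous components to homogeneous components, and that needs a different tool. The paper uses no intrinsic characterization. It expands $\sigma(h_{\bm r})$ and $\sigma(h_{-\bm r})$ in lexicographic order and uses $[h_{\bm r},h_{-\bm r}]=0$. It also uses that $\operatorname{ad}h_{-\bm r}\operatorname{ad}h_{\bm r}$ acts on $h_{\bm t}$ by the scalar $-(\overline{\bm t},\bm r)^2$. A minimal-length argument then shows that the lowest terms of the two images have opposite degrees, and likewise the highest terms. This forces each image to be a single term.
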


\begin{proof}
We will prove by contradiction, proceeding in the following two steps.

\medskip

    \textbf{Step 1.} Suppose
    \begin{equation}\label{eq:9}
        \sigma(h_{\bm{r}}) = \sum_{i=1}^{m} c_{i}h_{\bm{r}^{(i)}}, \tag{9}
    \end{equation}
    \begin{equation}\label{eq:10}
        \sigma(h_{-\bm{r}}) = \sum_{i=1}^{n}d_{i}h_{\bm{s}^{(i)}}.\tag{10}
    \end{equation}
    where $c_{i}, d_{i}\in \mathbb{K}^{\times}$, $\bm{r}^{(i)},\bm{s}^{(i)}\in \mathbb{Z}^{N}\setminus\{\bm{0}\}$.  We first show that  $\bm{r}^{(1)}+ \bm{s}^{(1)} = \bm{0} $. Assume, for contradiction, that
     \[
     \bm{r}^{(1)}+\bm{s}^{(1)}\prec \bm{0}.
     \]
     Since $[h_{\bm{r}},h_{-\bm{r}}]=0$, applying $\sigma$ yields, 
     \begin{align*}
        [\sigma(h_{\bm{r}}),\sigma(h_{-\bm{r}})]&=0, \\
         \sum_{i=1}^{m}\sum_{j=1}^{n}c_{i}d_{j}[h_{\bm{r}^{(i)}},h_{\bm{s}^{(j)}}] &=0.
    \end{align*}
    Then the lowest degree term must vanish, i.e.
    \[
    c_{1}d_{1}[h_{\bm{r}^{(1)}},h_{\bm{s}^{(1)}}]=0.
    \]
    Thus
    \[
    \overline{\bm{r}^{(1)}},\overline{\bm{s}^{(1)}} \in \left(\bm{r}^{(1)}\right)^{\perp}\cap \left(\bm{s}^{(1)}\right)^{\perp}.
    \]
    We choose $\bm{t}\in \mathbb{Z}^{N}\setminus\{\bm{0}\}$ such that $\overline{\bm{r}^{(1)}},\overline{\bm{s}^{(1)}}\notin (\bm{t})^{\perp}$, then
    \begin{equation*}
      \left[\left[h_{\bm{t}},h_{\bm{r}^{(1)}}\right], h_{\bm{s}^{(1)}}\right]\neq 0.
    \end{equation*}
    For any $\sum_{k=1}^{l'}b_{k}h_{q^{(k)}}\in \mathcal{H}_{N}'$ with $q^{(1)}=\bm{t}$, there exists
    $\displaystyle\sum_{k=1}^{l}a_{k}h_{\rho^{(k)}}\in \mathcal{H}_{N}'$
    such that
    \[
    \sigma\left(\sum_{k=1}^{l}a_{k}h_{\rho^{(k)}}\right)= \sum_{k=1}^{l'}b_{k}h_{q^{(k)}}.
    \] \\
    Thus, for any $\bm{t}\in \mathbb{Z}^{N}\setminus\{\bm{0}\}$ with $\overline{\bm{r}^{(1)}},\overline{\bm{s}^{(1)}}\notin \bm{t}^{\perp}$ and $ \sum_{k=1}^{l'}b_{k}h_{q^{(k)}} \in \mathcal{H}_{N}'$ with $b_{1}\neq 0$ and $q^{(1)}=\bm{t}$, there exists $\sum_{k=1}^{l}a_{k}h_{\rho^{(k)}}\in \mathcal{H}_{N}'$ with $a_{k}\in \mathbb{K}^{\times}$ for every $k\in \{1,\ldots, l\} $ such that $\sigma(\sum_{k=1}^{l}a_{k}h_{\rho^{(k)}})=\sum_{k=1}^{l'}b_{k}h_{q^{(k)}}$. \\
    Consider the set 
   
   \[
\bigcup_{\substack{
\bm{t}\in \mathbb{Z}^{N}\setminus\{\bm{0}\} \\
\overline{\bm{r}^{(1)}},\,\overline{\bm{s}^{(1)}} \notin \bm{t}^{\perp}
}}
\left\{
l\in \mathbb{N}\;\middle|\;
\sigma\!\left(\sum_{k=1}^{l} a_{k}h_{\rho^{(k)}}\right)
=
\sum_{k=1}^{l'} b_{k}h_{q^{(k)}},
\text{ for some } a_k\neq 0,\; b_1\neq 0,\; q^{(1)}=\bm{t}
\right\}.
\]
   
    The above set is non-empty subset of $\mathbb{N}$, therefore this set has the least element. We will use the property of this least element in the Step 2 to arrive at a contradiction.

    \medskip 
    
    \textbf{Step 2.} Let $\displaystyle\sum_{k=1}^{p}a_{k}h_{\rho^{(k)}}, \; a_{k}\neq 0$  and $\displaystyle\sum_{k=1}^{q}b_{k}h_{q^{(k)}}, b_{1}\neq 0$ are such that \begin{equation}\label{eq:11}
        \sigma\left(\sum_{k=1}^{p}a_{k}h_{\rho^{(k)}}\right)=\sum_{k=1}^{q}b_{k}h_{q^{(k)}}\tag{11}
    \end{equation} 
    with $\overline{\bm{r}^{(1)}},\; \overline{\bm{s}^{(1)}} \notin (\bm{q}^{(1)})^{\perp}$ and 
    \begin{equation}\label{eq:12}
        \left[\left[h_{q^{(1)}},h_{\bm{r}^{(1)}}\right],h_{\bm{s}^{(1)}}\right]\neq 0.\tag{12} 
    \end{equation}
    The existence $\displaystyle\sum_{k=1}^{p}a_{k}h_{\rho^{(k)}}$ and $\displaystyle\sum_{k=1}^{q}b_{k}h_{q^{(k)}}$ is guaranteed by Step 1. Let $p$ be the least positive integer that satisfies the above condition. We have the following two cases:

    \medskip
    \noindent
    \textbf{Case 1:} If $p=1$, then we have
    \begin{equation*}
        \left[\left[h_{\rho^{(1)}},h_{\bm{r}}\right],h_{-\bm{r}}\right]=-\left(\rho^{(1)},\bm{r}\right)^{2}h_{\rho^{(1)}}.
    \end{equation*}
    Applying $\sigma $ on both sides of the above equation, we have
    \begin{align}
        \notag  \left[\left[\sigma\left(h_{\rho^{(1)}}\right), \sigma(h_{\bm{r}})\right],\sigma(h_{-\bm{r}})\right]&= -\left(\rho^{(1)},\bm{r}\right)^{2}\sigma\left(h_{\rho^{(1)}}\right), \\
        \left[\left[\sum_{k=1}^qb_{k}h_{q^{(k)}}, \sum_{i=1}^{m}c_{i}h_{\bm{r}^{(i)}}\right],\sum_{j=1}^{n}d_{j}h_{\bm{s}^{(j)}}\right]&=-\left(\rho^{(1)},\bm{r}\right)^{2}\sum_{k=1}^{q}b_{k}h_{q^{(k)}}.\tag{13}
    \end{align}
    Notice that the lowest degree term on the LHS has degree $\bm{r}^{(1)}+\bm{s}^{(1)}+q^{(1)} \prec q^{(1)}$, while the minimal term on RHS has degree $ q^{(1)}$. These degrees cannot match since the graded components intersect trivially
    \[
    (\mathcal{H}_{N}')_{q^{(1)}}\cap (\mathcal{H}_{N}')_{q^{(1)}+\bm{r}^{(1)}+\bm{s}^{(1)}}=\{0\},
    \]
    which contradicts~\eqref{eq:12}.

    \medskip
    \noindent
    \textbf{Case 2:} If $p>1$, we note that
    \begin{equation*}
        \left[\left[\sum_{k=1}^{p}a_{k}h_{\rho^{(k)}},h_{\bm{r}}\right],h_{-\bm{r}}\right]=-\sum_{k=1}^{p}a_{k}\left(\overline{\rho^{(k)}},\bm{r}\right)^{2}h_{\rho^{(k)}}.
    \end{equation*}
    Applying $\sigma$ to both sides of the above equation, we have
    \begin{equation*}
        \left[\left[\sum_{k=1}^{q}b_{k}h_{q^{(k)}},\sigma(h_{\bm{r}})\right], \sigma(h_{-\bm{r}})\right]=\sigma\left(-\sum_{k=1}^{p}a_{k}\left(\overline{\rho^{(k)}},\bm{r}\right)^{2}h_{\rho^{(k)}}\right).
    \end{equation*}
    Using~\eqref{eq:9} and~\eqref{eq:10}, we have
    \begin{equation}\label{eq:14}
        \sum_{k=1}^{q}\sum_{i=1}^{m}\sum_{j=1}^{n}b_{k}c_{i}d_{j}\left[\left[h_{q^{(k)}},h_{\bm{r}^{(i)}}\right],h_{\bm{s}^{(j)}}\right]=\sigma\left(-\sum_{k=1}^{p}a_{k}\left(\overline{\rho^{(k)}},\bm{r}\right)^{2}h_{\rho^{(k)}}\right).\tag{14}
    \end{equation}
    If $\left(\overline{\rho^{(k)}},\bm{r}\right)$ are same for every $k$,  then we get minimal degree terms of unequal degree in~\eqref{eq:14}, a contradiction to~\eqref{eq:12}.

    Otherwise, we have $k_{0}$ such that $\left(\overline{\rho^{(k_{0})}},\bm{r}\right)^{2}\neq 0$ and 
    \begin{align*}
        \sum_{k=1}^{p}-a_{k}\left(\overline{\rho^{(k)}},\bm{r}\right)^{2}h_{\rho^{(k)}} &=-a_{k_{0}}\left(\overline{\rho^{(k_{0})}},\bm{r}\right)^{2}h_{\rho^{(k_{0})}} + \sum_{k\neq k_{0}}-a_{k}\left(\overline{\rho^{(k)}},\bm{r}\right)^{2}h_{\rho^{(k)}}, \\
        &= \left(\overline{\rho^{(k_{0})}},\bm{r}\right)^{2}\sum_{k=1}^{p}a_{k}h_{\rho^{(k)}} + \sum_{k\neq k_{0}}\left(a_{k}\left(\overline{\rho^{(k_{0})}},\bm{r}\right)^{2}-a_{k}\left(\overline{\rho^{(k)}},\bm{r}\right)^{2}\right)h_{\rho^{(k)}}.
    \end{align*}
    Applying $\sigma$ on both sides of the above equation, we obtain the expression for RHS of~\eqref{eq:14} as:
    \begin{multline}\label{eq:15}
           \sigma\left(\sum_{k=1}^{p}-a_{k}\left(\overline{\rho^{(k)}},\bm{r}\right)^{2}h_{\rho^{(k)}}\right) \\= 
           \left(\overline{\rho^{(k_{0})}},\bm{r}\right)^{2}\sum_{k=1}^{p}b_{k}h_{q^{(k)}} + \sigma \left(\sum_{k\neq k_{0}}\left(a_{k}\left(\overline{\rho^{(k_{0})}},\bm{r}\right)^{2}-a_{k}\left(\overline{\rho^{(k)}},\bm{r}\right)^{2}\right)h_{\rho^{(k)}}\right).\tag{15}
    \end{multline}
    Since $\bm{r}^{(1)}+\bm{s}^{(1)} \prec \bm{0}$, it follows from the~\eqref{eq:14} and~\eqref{eq:15} that
    \[
    b_{1}c_{1}d_{1}\left[\left[h_{q^{(1)}},h_{\bm{r}^{(1)}}\right],h_{\bm{s}^{(1)}}\right]
    \]
    is the minimal term of $\displaystyle \sigma \left(\sum_{k\neq k_{0}}\left(a_{k}\left(\overline{\rho^{(k_{0})}},\bm{r}\right)^{2}-a_{k}\left(\overline{\rho^{(k)}},\bm{r}\right)^{2}\right)h_{\rho^{(k)}}\right)$. By the choice of $p$, it follows that
    \begin{equation}\label{eq:16}
        \left[\left[h_{\bm{r}^{(1)}+\bm{s}^{(1)}+q^{(1)}},h_{\bm{r}^{(1)}}\right],h_{\bm{s}^{(1)}}\right]=0.\tag{16}
    \end{equation}
Using~\eqref{eq:16} and the fact that $\overline{\bm{r}^{(1)}},\overline{\bm{s}^{(1)}} \in \left(\bm{r}^{(1)}\right)^{\perp}\cap \left(\bm{s}^{(1)}\right)^{\perp} $,  we deduce that \begin{equation*}
    \left( \overline{q^{(1)}},\bm{r}^{(1)} \right)\left(\overline{q^{(1)}},\bm{s}^{(1)}\right)h_{2\bm{r}^{(1)}+2\bm{s}^{(1)}+q^{(1)}}=0. \end{equation*} 
Since $\overline{\bm{r}^{(1)}},\overline{\bm{s}^{(1)}}\notin (q^{(1)})^{\perp}$, it follows that $\left( \overline{q^{(1)}},\bm{r}^{(1)} \right)\neq 0$ and  $\left(\overline{q^{(1)}},\bm{s}^{(1)}\right)\neq 0$ and therefore the last equation implies $h_{2\bm{r}^{(1)}+2\bm{s}^{(1)}+q^{(1)}}=0$, which is only possible only when $ 2\bm{r}^{(1)}+2\bm{s}^{(1)}+q^{(1)}=0$, which implies that
$\overline{\bm{r}^{(1)}} \in \left(q^{(1)}\right)^{\perp}$ and $\overline{\bm{s}^{(1)}} \in \left(q^{(1)}\right)^{\perp}$, which contradicts the choice of $q^{(1)}$.

\medskip
\noindent

Thus, the case $\bm{r}^{(1)}+\bm{s}^{(1)}\prec\bm{0}$ is not possible. Similarly, we can deduce that $\bm{r}^{(1)}+\bm{s}^{(1)}\succ \bm{0}$ is not possible. Therefore the only possibility is $\bm{r}^{(1)}+\bm{s}^{(1)}=\bm{0}$.

\medskip
\noindent
By the same argument applied to the highest-degree terms, one can deduces that $\bm{r}^{(m)}+\bm{s}^{(n)}=\bm{0}$.
If $\bm{r}^{(1)} \prec \bm{r}^{(m)}$, then $\bm{s}^{(1)}=-\bm{r}^{(1)}\succ-\bm{r}^{(m)}=\bm{s}^{(n)}$, which contradicts $\bm{s}^{(1)}\prec \bm{s}^{(n)}$. Therefore we have $\bm{r}^{(1)}=\bm{r}^{(m)}=-\bm{s}^{(n)}=-\bm{s}^{(1)}$ and hence the result follows.
\end{proof}
The method used in Lemma~\ref{lem:3.2} refers to Lemma~4.2 in \cite{TX09}. \\
Note that for each $\bm{Q}\in \mathbf{GSp}_{N}(\mathbb{Z})$, there exists a naturally defined automorphism $\sigma_{\bm{Q}}$ of $\mathcal{H}_{N}'$, as follows
\begin{equation}\label{eq:17}\tag{17}
    \sigma_{Q}(h_{\bm{r}})=
    \begin{cases}
       h_{\bm{Q}\bm{r}}= D\left(\bm{Q}^{-\top}\overline{\bm{r}},\bm{Q}\bm{r}\right),\; & \forall \;\bm{r}\in \mathbb{Z}^{N}\setminus\{\bm{0}\} \; \text{for}\;  \bm{J}\bm{Q}=\bm{Q}^{-\top}\bm{J} \\
       (-1)^{|\bm{r}|-1}h_{\bm{Q}\bm{r}}=(-1)^{|\bm{r}|}D\left(\bm{Q}^{-\top}\overline{\bm{r}},\bm{Q}\bm{r}\right),\; & \forall \; \bm{r}\in \mathbb{Z}^{N}\setminus\{\bm{0}\} \;  \text{for}\; \bm{J}\bm{Q}=-\bm{Q}^{-\top}\bm{J}
    \end{cases}
\end{equation}
    Similarly, for any $\lambda \in (\mathbb{K}^{\times})^{N}$, one can define the natural automorphism $\sigma_{\lambda}$ as \begin{equation*}
        \sigma(h_{\bm{r}}) =\lambda^{\bm{r}}h_{\bm{r}}.
    \end{equation*}
    \begin{lemma}\label{lem:3.3}
        For $\bm{Q}\in \mathbf{GSp}_{N}(\mathbb{Z})$ and $\lambda \in (\mathbb{K}^{\times})^{N}$, there is $\mu =\mu(\lambda, \bm{Q})\in (\mathbb{K}^{\times})^{N}$ such that $\mu^{\bm{r}}=\lambda^{\bm{Q}\bm{r}}$.
    \end{lemma}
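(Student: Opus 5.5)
The map $\bm{r}\mapsto \lambda^{\bm{Q}\bm{r}}$ is a group homomorphism $\mathbb{Z}^{N}\to \mathbb{K}^{\times}$. Every such homomorphism is determined by its values on the standard basis $e_{1},\ldots,e_{N}$. So the plan is to define $\mu$ componentwise by evaluating on the basis vectors, and then check multiplicativity. Throughout, $\lambda^{\bm{r}}$ denotes $\lambda_{1}^{r_{1}}\cdots\lambda_{N}^{r_{N}}$, and $\bm{Q}=(q_{ij})$ is an integer matrix, so $\bm{Q}e_{j}=\sum_{i=1}^{N}q_{ij}e_{i}\in\mathbb{Z}^{N}$.

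\textbf{Definition of $\mu$.} For each $j$ set
\[
\mu_{j}:=\lambda^{\bm{Q}e_{j}}=\prod_{i=1}^{N}\lambda_{i}^{q_{ij}}.
\]
The exponents $q_{ij}$ are integers, possibly negative. Each $\lambda_{i}\in\mathbb{K}^{\times}$ is invertible, so $\mu_{j}$ is a well-defined element of $\mathbb{K}^{\times}$. Hence $\mu=(\mu_{1},\ldots,\mu_{N})^{\top}\in(\mathbb{K}^{\times})^{N}$.

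\textbf{Verification.} Take $\bm{r}=\sum_{j}r_{j}e_{j}$. Linearity of $\bm{Q}$ gives
\[
(\bm{Q}\bm{r})_{i}=\sum_{j}q_{ij}r_{j}.
\]
Because $\mathbb{K}^{\times}$ is abelian, the laws of exponents hold for integer exponents. Therefore
\[
\lambda^{\bm{Q}\bm{r}}=\prod_{i}\lambda_{i}^{\sum_{j}q_{ij}r_{j}}=\prod_{j}\Bigl(\prod_{i}\lambda_{i}^{q_{ij}}\Bigr)^{r_{j}}=\prod_{j}\mu_{j}^{r_{j}}=\mu^{\bm{r}}.
\]
Equivalently, $\bm{r}\mapsto\lambda^{\bm{r}}$ is a homomorphism $\mathbb{Z}^{N}\to\mathbb{K}^{\times}$, and precomposing it with the $\mathbb{Z}$-linear map $\bm{Q}$ yields another homomorphism. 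Its values on the basis are the $\mu_{j}$ above.

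There is no real obstacle here. The only care needed is that negative exponents are allowed, which is fine because the $\lambda_{i}$ are units. The hypothesis $\bm{Q}\in\mathbf{GSp}_{N}(\mathbb{Z})$ is not used beyond $\bm{Q}$ having integer entries. Invertibility of $\bm{Q}$ shows further that $\lambda\mapsto\mu(\lambda,\bm{Q})$ is a bijection of $(\mathbb{K}^{\times})^{N}$, with inverse given by $\bm{Q}^{-1}$. This is presumably what is needed later to form the semidirect product $\mathbf{GSp}_{N}(\mathbb{Z})\ltimes(\mathbb{K}^{\times})^{N}$, via the relation $\sigma_{\bm{Q}}\sigma_{\lambda}=\sigma_{\mu}\sigma_{\bm{Q}}$.
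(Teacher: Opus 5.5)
Your proof is correct and is exactly the paper's: set $\mu_j=\lambda^{\bm{Q}e_j}$ and use that $\bm r\mapsto\lambda^{\bm{Q}\bm r}$ is a homomorphism $\mathbb{Z}^N\to\mathbb{K}^\times$, which you verify more carefully than the paper does. One small slip is in your closing side remark: with $\mu=\mu(\lambda,\bm Q)$ the relation that holds is $\sigma_{\bm Q}^{-1}\sigma_\lambda\sigma_{\bm Q}=\sigma_\mu$ (as in the proof of Theorem~\ref{thm:3.4}), i.e.\ $\sigma_\lambda\sigma_{\bm Q}=\sigma_{\bm Q}\sigma_\mu$, whereas $\sigma_{\bm Q}\sigma_\lambda=\sigma_\mu\sigma_{\bm Q}$ would require $\mu=\mu(\lambda,\bm Q^{-1})$.
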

    \begin{proof}
        Take $\mu_{i}=\lambda^{\bm{Q}e_{i}}\in (\mathbb{K}^{\times})^{N}$ for every $i\in \{1,\ldots,N\}$, then for $\mu=(\mu_{1},\ldots,\mu_{N})^{\top}\in (\mathbb{K}^{\times})^{N}$ we have $\mu^{\bm{r}}=\lambda^{\bm{Q}\bm{r}}$.
    \end{proof}
\begin{theorem}\label{thm:3.4}
    $\operatorname{Aut}(\mathcal{H}_{N}')\cong \bm{GSp}_{N}(\mathbb{Z})\ltimes (\mathbb{K}^{\times})^{N}$.
\end{theorem}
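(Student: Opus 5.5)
The plan is to combine Lemma~\ref{lem:3.2}, Lemma~\ref{lem:3.1} and Lemma~\ref{lem:3.3} into an explicit description of every automorphism, and then read off the group structure.

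\textbf{Surjectivity of the parametrisation.} Let $\sigma\in\operatorname{Aut}(\mathcal{H}_N')$. By Lemma~\ref{lem:3.2}, $\sigma$ maps each graded component $(\mathcal{H}_N')_{\bm r}$ onto some $(\mathcal{H}_N')_{\bm s}$, and it maps $(\mathcal{H}_N')_{-\bm r}$ onto $(\mathcal{H}_N')_{-\bm s}$. This is exactly the hypothesis of Lemma~\ref{lem:3.1}, with $\sigma(\mathcal{H}_N')=\mathcal{H}_N'$ automatic. So there exist $\bm Q\in\mathbf{GSp}_N(\mathbb{Z})$ and $\lambda\in(\mathbb{K}^\times)^N$ with
\[
\sigma(h_{\bm r})=\epsilon_{\bm Q}(\bm r)\,\lambda^{\bm r}\,h_{\bm Q\bm r},
\]
where $\epsilon_{\bm Q}(\bm r)=1$ if $\bm Q\in\mathbf{Sp}_N(\mathbb{Z})$ and $\epsilon_{\bm Q}(\bm r)=(-1)^{|\bm r|-1}$ otherwise. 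Comparing with \eqref{eq:17}, this says $\sigma=\sigma_{\bm Q}\circ\sigma_\lambda$. Hence every automorphism has this form.

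\textbf{Well-definedness and uniqueness.} Next I would check that $\sigma_{\bm Q}$ and $\sigma_\lambda$ really are automorphisms.
\begin{itemize}
\item For $\sigma_\lambda$ this is immediate, since $\lambda^{\bm r+\bm s}=\lambda^{\bm r}\lambda^{\bm s}$.
\item For $\sigma_{\bm Q}$ with $\bm Q\in\mathbf{Sp}_N(\mathbb{Z})$, one uses $(\overline{\bm Q\bm r},\bm Q\bm s)=(\overline{\bm r},\bm s)$.
\item When $\bm Q^\top\bm J\bm Q=-\bm J$, the bracket coefficient changes sign. This is compensated by $(-1)^{|\bm r|-1}(-1)^{|\bm s|-1}=-(-1)^{|\bm r+\bm s|-1}$.
\end{itemize}
Bijectivity of $\sigma_{\bm Q}$ follows because $\bm Q$ is invertible.

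For uniqueness, suppose $\sigma_{\bm Q}\sigma_\lambda=\sigma_{\bm Q'}\sigma_{\lambda'}$. Looking at degrees gives $\bm Q\bm r=\bm Q'\bm r$ for all $\bm r$, so $\bm Q=\bm Q'$. Evaluating at $h_{e_i}$ then gives $\lambda_i=\lambda_i'$. Thus $\Phi:(\bm Q,\lambda)\mapsto\sigma_{\bm Q}\sigma_\lambda$ is a bijection $\mathbf{GSp}_N(\mathbb{Z})\times(\mathbb{K}^\times)^N\to\operatorname{Aut}(\mathcal{H}_N')$.

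\textbf{Group structure.} The set $T=\{\sigma_\lambda\}\cong(\mathbb{K}^\times)^N$ is a subgroup. It is normal, since conjugation by $\sigma_{\bm Q}$ preserves it: the signs cancel and
\[
\sigma_{\bm Q}^{-1}\sigma_\lambda\sigma_{\bm Q}(h_{\bm r})=\lambda^{\bm Q\bm r}h_{\bm r}=\mu^{\bm r}h_{\bm r},
\]
where $\mu=\mu(\lambda,\bm Q)$ is supplied by Lemma~\ref{lem:3.3}. This gives the commutation rule $\sigma_\lambda\sigma_{\bm Q}=\sigma_{\bm Q}\sigma_{\mu}$.

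The map $\bm Q\mapsto\sigma_{\bm Q}$ should be a group homomorphism $\mathbf{GSp}_N(\mathbb{Z})\to\operatorname{Aut}(\mathcal{H}_N')$, and this is \emph{the main obstacle}. The sign factor $(-1)^{|\bm r|-1}$ may not compose exactly: $\sigma_{\bm Q}\sigma_{\bm Q'}$ could differ from $\sigma_{\bm Q\bm Q'}$ by a character $h_{\bm r}\mapsto\pm h_{\bm r}$ of the form $(-1)^{\langle v,\bm r\rangle}$, for instance involving $|\bm Q'\bm r|$ versus $|\bm r|$.

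If that happens, I would note that any such discrepancy is itself $\sigma_\nu$ for some $\nu\in\{\pm1\}^N\subset(\mathbb{K}^\times)^N$. The composition law therefore stays inside the image of $\Phi$, and $\operatorname{Aut}(\mathcal{H}_N')$ is an extension of $\mathbf{GSp}_N(\mathbb{Z})$ by $T$. To split this extension I would first compute the cocycle on generators. If it is not identically trivial, I would redefine the sign in \eqref{eq:17} using a fixed sign character adapted to $\bm Q$, namely $\bm r\mapsto(-1)^{\text{a linear function of }\bm r}$ chosen so that $\bm Q\mapsto\sigma_{\bm Q}$ becomes multiplicative. For example, I would fix a single anti-symplectic involution $\bm P$, take the sign only for $\bm P$, and set $\sigma_{\bm S\bm P}=\sigma_{\bm S}\sigma_{\bm P}$.

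With such a splitting in hand, $\operatorname{Aut}(\mathcal{H}_N')=\sigma(\mathbf{GSp}_N(\mathbb{Z}))\ltimes T$ follows formally.
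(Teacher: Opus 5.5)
\textbf{There is a genuine gap at the splitting step.} Everything before it is fine:
\begin{itemize}
\item every automorphism is $\sigma_{\bm Q}\sigma_\lambda$ with $(\bm Q,\lambda)$ unique;
\item $T=\{\sigma_\lambda\}$ is normal;
\item $\operatorname{Aut}(\mathcal{H}_N')$ is an extension of $\mathbf{GSp}_N(\mathbb{Z})$ by $T$.
\end{itemize}
A semidirect product, however, needs a homomorphic section $\mathbf{GSp}_N(\mathbb{Z})\to\operatorname{Aut}(\mathcal{H}_N')$, and your proposal does not produce one.

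Your suspicion is correct: with the signs of \eqref{eq:17}, $\bm Q\mapsto\sigma_{\bm Q}$ is not multiplicative. For $N=2$, take $\bm P=\mathrm{diag}(1,-1)$ (anti-symplectic) and $\bm S$ with $\bm S e_1=e_1$, $\bm S e_2=e_1+e_2$ (symplectic). Then
\[
\sigma_{\bm P}\sigma_{\bm S}(h_{e_2})=-h_{e_1-e_2},\qquad \sigma_{\bm P\bm S}(h_{e_2})=h_{e_1-e_2}.
\]
The paper's proof simply asserts that $\{\sigma_{\bm Q}\}$ is a subgroup isomorphic to $\mathbf{GSp}_N(\mathbb{Z})$. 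This example shows that assertion is false for \eqref{eq:17}, so the paper has the same hole. Your proposal flags it but does not fill it.

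\textbf{Your proposed repair fails.} Setting $\sigma_{\bm S\bm P}:=\sigma_{\bm S}\sigma_{\bm P}$, with signs only on $\bm P$, is multiplicative only if $\sigma_{\bm P}\sigma_{\bm S}\sigma_{\bm P}^{-1}=\sigma_{\bm P\bm S\bm P^{-1}}$ for all $\bm S\in\mathbf{Sp}_N(\mathbb{Z})$. Write $\sigma_{\bm P}(h_{\bm r})=\epsilon(\bm r)h_{\bm P\bm r}$; the condition becomes $\epsilon(\bm S\bm v)=\epsilon(\bm v)$ for all $\bm v\neq\bm 0$.
\begin{itemize}
\item Since $\bm P$ is anti-symplectic, the bracket forces $\epsilon(\bm r+\bm s)=-\epsilon(\bm r)\epsilon(\bm s)$ whenever $(\overline{\bm r},\bm s)\neq 0$. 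Hence $\epsilon=-\chi$ for a character $\chi$, and a purely linear exponent $(-1)^{\ell(\bm r)}$ is impossible.
\item By Lemma~\ref{lem:2.1}, $\mathbf{Sp}_N(\mathbb{Z})$-invariance makes $\chi$ constant on primitive vectors. Comparing $e_1$, $e_2$, $e_1+e_2$ forces $\chi\equiv 1$.
\item The paper's sign $(-1)^{|\bm r|-1}=-(-1)^{|\bm r|}$ has $\chi\not\equiv 1$, so it does not work.
\end{itemize}

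\textbf{The missing idea is to use the constant sign from the start:}
\[
\sigma_{\bm Q}(h_{\bm r})=\lambda(\bm Q)\,h_{\bm Q\bm r},\qquad\text{where }\bm Q^{\top}\bm J\bm Q=\lambda(\bm Q)\bm J.
\]
Equivalently, $\sigma_{\bm Q}(D(\overline{\bm r},\bm r))=D(\bm Q^{-\top}\overline{\bm r},\bm Q\bm r)$. This is conjugation by the torus automorphism $t^{\bm r}\mapsto t^{\bm Q\bm r}$ of $A_N$, restricted to $\mathcal{H}_N'$.
\begin{itemize}
\item It is an automorphism, because $(\overline{\bm Q\bm r},\bm Q\bm s)=\lambda(\bm Q)(\overline{\bm r},\bm s)$ and $\lambda(\bm Q)^2=1$.
\item It is multiplicative, because the multiplier $\lambda$ is a character of $\mathbf{GSp}_N(\mathbb{Z})$.
\item On anti-symplectic $\bm Q$ it differs from \eqref{eq:17} only by $\sigma_\nu$ with $\nu=(-1,\ldots,-1)$, so your surjectivity step still applies.
\end{itemize}
With this section, the rest of your argument gives $\operatorname{Aut}(\mathcal{H}_N')=\{\sigma_{\bm Q}\}\ltimes T$.
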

\begin{proof}
    For $\sigma\in \operatorname{Aut}(\mathcal{H}_{N}')$, Lemma~\ref{lem:3.1} and Lemma~\ref{lem:3.2} implies the existence of $\bm{Q}\in \mathbf{GSp}_{N}(\mathbb{Z})$ and $\lambda\in (\mathbb{K}^{\times})^{N}$ such that \begin{equation*}
        \sigma(h_{\bm{r}})= \lambda^{\bm{r}}h_{\bm{Q}\bm{r}}, \; \text{for} \; \bm{J}\bm{Q}=\bm{Q}^{-\top}J
    \end{equation*}
    or
    \begin{equation*}
        \sigma(h_{\bm{r}})=(-1)^{|\bm{r}|-1}\lambda^{\bm{r}} h_{\bm{Q}\bm{r}}, \; \text{for} \; \bm{J}\bm{Q}=-\bm{Q}^{-\top}J.
    \end{equation*}
    
    If $\sigma_{\bm{Q}}$ and $\sigma_{\lambda}$ are the automorphism as defined above, then $\sigma = \sigma_{\bm{Q}}\sigma_{\lambda}$. Note that $\{\sigma_{\bm{Q}}\mid \bm{Q}\in \mathbf{GSp}_{N}(\mathbb{Z})\}$ and $\{\sigma_{\lambda} \mid \lambda \in (\mathbb{K}^{\times})^{N}\}$ are the subgroups of $\operatorname{Aut}(\mathcal{H}_{N}') $, which are isomorphic to $\mathbf{GSp}_{N}(\mathbb{Z})$ and $ (\mathbb{K}^{\times})^{N}$, respectively. Therefore, we have $\operatorname{Aut}(\mathcal{H}_{N}')\cong \mathbf{GSp}_{N}(\mathbb{Z}).(\mathbb{K}^{\times})^{N}$.    
    \\
    Therefore, it remains to prove that $\{\sigma_{\lambda} \mid \lambda \in (\mathbb{K}^{\times})^{N} \}\trianglelefteq \operatorname{Aut}(\mathcal{H}_{N}')$. 
    For any $\theta \in \operatorname{Aut}(\mathcal{H}_{N}')$, given by $\theta(h_{\bm{r}})= \lambda^{\bm{r}}h_{\bm{Q}\bm{r}}$ arising from $\bm{Q}\in \mathbf{GSp}_{N}(\mathbb{Z})$ such that $\bm{J}\bm{Q}=\bm{Q}^{-\top}\bm{J} $ or by $\theta(h_{\bm{r}})= (-1)^{|\bm{r}|-1}\lambda^{\bm{r}}h_{\bm{Q}\bm{r}}$ arising from $\bm{Q}\in \mathbf{GSp}_{N}(\mathbb{Z})$ such that $ \bm{J}\bm{Q}=-\bm{Q}^{-\top}\bm{J}$. It is easy to verify that \begin{equation*}
        \theta^{-1}\sigma_{\lambda}\theta(h_{\bm{r}})=\lambda^{\bm{Q}\bm{r}}h_{\bm{r}}.
    \end{equation*}
    If $\mu_{i}=\lambda^{\bm{Q}e_{i}}\in (\mathbb{K}^{\times})^{N}$ and $\mu=\mu_{1}\cdots\mu_{N}$, then $\theta^{-1}\sigma_{\lambda}\theta=\sigma_{\mu}$.
\end{proof}
The following lemma shows that any automorphism of Hamiltonian Lie algebra $\mathcal{H}_{N}$ maps $\mathcal{H}_{N}'$ to $\mathcal{H}_{N}'$ and  $\mathfrak{h}$ to $\mathfrak{h}$. 
\begin{lemma}\label{lem:3.5}
    If $\sigma \in \operatorname{Aut}(\mathcal{H}_{N})$, then $\sigma(\mathcal{H}_{N}')= \mathcal{H}_{N}'$ and $ \sigma(\mathfrak{h})=\mathfrak{h}$.
\end{lemma}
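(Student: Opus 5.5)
The first claim is formal. Since $\mathcal{H}_{N}'=[\mathcal{H}_{N},\mathcal{H}_{N}]$ and any automorphism preserves the derived subalgebra, we get $\sigma(\mathcal{H}_{N}')=[\sigma(\mathcal{H}_{N}),\sigma(\mathcal{H}_{N})]=\mathcal{H}_{N}'$. (One can check directly that $[\mathcal{H}_N,\mathcal{H}_N]=\bigoplus_{\bm{r}\neq\bm{0}}\mathbb{K}h_{\bm{r}}$: indeed $[d,h_{\bm r}]=(u,\bm r)h_{\bm r}$ for $d=D(u,\bm 0)$, and $[h_{\bm r},h_{-\bm r}]=0$, so no degree-zero part arises.) Consequently $\sigma|_{\mathcal{H}_{N}'}\in\operatorname{Aut}(\mathcal{H}_{N}')$. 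By Theorem~\ref{thm:3.4}, more precisely by Lemmas~\ref{lem:3.1} and \ref{lem:3.2}, there are $\bm{Q}\in\mathbf{GSp}_N(\mathbb{Z})$ and scalars $c_{\bm r}\in\mathbb{K}^\times$ with $\sigma(h_{\bm r})=c_{\bm r}h_{\bm{Q}\bm r}$ for all $\bm r\neq\bm 0$.

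For the second claim, take $d=D(u,\bm 0)\in\mathfrak h$ and write $\sigma(d)=d'+x$, where $d'\in\mathfrak h$ and $x=\sum_{i=1}^{n} a_i h_{\bm{s}^{(i)}}\in\mathcal{H}_N'$ is a finite sum with distinct nonzero $\bm{s}^{(i)}$ and $a_i\neq 0$. Apply $\sigma$ to $[d,h_{\bm r}]=(u,\bm r)h_{\bm r}$, which gives
\[
[d'+x,\,c_{\bm r}h_{\bm Q\bm r}]=(u,\bm r)c_{\bm r}h_{\bm Q\bm r},
\]
that is,
\[
(d',\bm Q\bm r)\,h_{\bm Q\bm r}+\sum_i a_i(\overline{\bm s^{(i)}},\bm Q\bm r)\,h_{\bm s^{(i)}+\bm Q\bm r}=(u,\bm r)\,h_{\bm Q\bm r},
\]
where we identify $d'$ with its vector in $\mathbb{K}^N$. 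Since $\bm s^{(i)}\neq\bm 0$, the degrees $\bm s^{(i)}+\bm Q\bm r$ are distinct from $\bm Q\bm r$ and from each other. Comparing graded components therefore yields $a_i(\overline{\bm s^{(i)}},\bm Q\bm r)=0$ for every $i$ and every $\bm r\neq\bm 0$. Because $\bm Q$ is bijective on $\mathbb{Z}^N$, we can choose $\bm r$ with $(\overline{\bm s^{(i)}},\bm Q\bm r)\neq0$; here we use $\overline{\bm s^{(i)}}=\bm J\bm s^{(i)}\neq \bm 0$ and the nondegeneracy of the form. This forces $a_i=0$, so $x=0$, and hence $\sigma(\mathfrak h)\subseteq\mathfrak h$.

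Applying the same argument to $\sigma^{-1}$ gives $\sigma^{-1}(\mathfrak h)\subseteq\mathfrak h$, and therefore $\sigma(\mathfrak h)=\mathfrak h$. The only step requiring any care is the graded-component comparison, and in particular ensuring that the term $h_{\bm s^{(i)}+\bm Q\bm r}$ cannot cancel against $h_{\bm Q\bm r}$. This holds because $\bm s^{(i)}\neq \bm 0$, and the distinct shifts $\bm s^{(i)}$ give distinct degrees, so no cancellation among the sum terms is possible either.
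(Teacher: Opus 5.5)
Your proof is correct. The first claim is handled exactly as in the paper, but for $\sigma(\mathfrak h)=\mathfrak h$ you take a different route.

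\textbf{The paper's route.} It never uses the structure of $\sigma|_{\mathcal H_N'}$. It assumes $\sigma(D(u,\bm 0))$ has a nonzero $\mathcal H_N'$-part and looks at a lexicographically extreme degree of that part (e.g.\ $\bm r^{(1)}\prec\bm 0$). It then reruns a minimal-length, lowest-degree-term argument modelled on Steps 1 and 2 of Lemma~\ref{lem:3.2}, using a preimage $\sum_{k=1}^{p}a_kh_{\rho^{(k)}}$ of minimal length $p$ of an element whose lowest degree $q^{(1)}$ satisfies $(\overline{\bm r^{(1)}},q^{(1)})\neq 0$, and reaches a contradiction.

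\textbf{Your route.} You restrict $\sigma$ to $\mathcal H_N'$ and apply Lemmas~\ref{lem:3.1} and~\ref{lem:3.2}. These concern $\operatorname{Aut}(\mathcal H_N')$ only, so there is no circularity. This gives $\sigma(h_{\bm r})=c_{\bm r}h_{\bm Q\bm r}$, and a single comparison of graded components then forces every $a_i=0$.

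\textbf{What each buys.} Your version is shorter and avoids redoing the delicate leading-term bookkeeping a second time. It only needs the conclusion of Lemma~\ref{lem:3.2} that $\sigma$ permutes the components $\mathbb K h_{\bm r}$, $\bm r\neq\bm 0$, surjectively; the symplectic property of $\bm Q$ plays no role. As a bonus, the degree-$\bm Q\bm r$ part of your identity gives $(d',\bm Q\bm r)=(u,\bm r)$, i.e.\ $d'=\bm Q^{-\top}u$. That is the computation the paper performs separately in Theorem~\ref{thm:3.6}. In exchange, the paper's argument is independent: it proves $\sigma(\mathfrak h)=\mathfrak h$ without presupposing the description of $\operatorname{Aut}(\mathcal H_N')$.
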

\begin{proof}
   Applying $\sigma$ to both the sides of the identity  $[\mathcal{H}_{N},\mathcal{H}_{N}]=\mathcal{H}_{N}'$, we obtain $\sigma([\mathcal{H}_{N},\mathcal{H}_{N}])=\sigma(H_{N}')$. As $\sigma$ is automorphism and $\sigma(\mathcal{H}_{N})=\mathcal{H}_{N}$, it follows that $\sigma(\mathcal{H}_{N}')=[\sigma(\mathcal{H}_{N}),\sigma(\mathcal{H}_{N})]=[\mathcal{H}_{N},\mathcal{H}_{N}]=\mathcal{H}_{N}'$. \\
   Now we show that $\sigma(\mathfrak{h})=\mathfrak{h}$. If not, then for some $0\neq u \in \mathbb{K}^{N}$ there exists  $ 0\neq v\in \mathbb{K}^{N}$ and  integral vectors $\bm{r}^{(1)}\prec \bm{r}^{(2)} \prec \cdots \prec \bm{r}^{(m)}$ such that \begin{equation}\label{eq:18}\tag{18}
      \sigma(D(u,\bm{0}))= D(v,\bm{0}) + \sum_{i=1}^{m}c_{i}h_{\bm{r}^{(i)}},
   \end{equation}
   where $0\neq \sum_{i=1}^{m}c_{i}h_{\bm{r}^{(i)}}\in \mathcal{H}_{N}'$. Then either $\bm{r}^{(1)}\prec \bm{0}$ or $ \bm{r}^{(m)}\succ \bm{0} $. \\
   If $\bm{r}^{(1)}\prec \bm{0}$, choose $\bm{s}\in \mathbb{Z}^{N}\setminus\{\bm{0}\}$ such that $ \overline{\bm{r}^{(1)}}\notin \bm{s}^{\perp}$, also \begin{equation*}
       \left[h_{\bm{s}}, h_{\bm{r}^{(1)}}\right]=\left(\overline{\bm{s}},\bm{r}^{(1)}\right)h_{\bm{s}+\bm{r}^{(1)}}\neq 0.
   \end{equation*}
   Since $\sigma(\mathcal{H}_{N}')=\mathcal{H}_{N}'$, for any $\sum_{k=1}^{l'}b_{k}h_{q^{(k)}}\in \mathcal{H}_{N}'$ with $b_{1}\neq0$, $q^{(1)}=\bm{s}$, there is $\sum_{k=1}^{l}a_{k}h_{\rho^{(k)}}\in \mathcal{H}_{N}'$ with $a_{k}\neq 0$ for every $k\in \{1,\ldots,l\}$ such that $\sigma \left(\sum_{k=1}^{l}a_{k}h_{\rho^{(k)}}\right)= \sum_{k=1}^{l'}b_{k}h_{q^{(k)}}$.\\
   Suppose $p>0$ be the least integer for which
   \begin{equation}\label{eq:19}\tag{19}
      \sigma\left(\sum_{k=1}^{p}a_{k}h_{\rho^{(k)}}\right)=\sum_{k=1}^{q}b_{k}h_{q^{(k)}},
   \end{equation} 
   with $\overline{\bm{r}^{(1)}}\notin \left(q^{(1)}\right)^{\perp}$. Note that such a $p$ exists by a similar argument as used in Step~1 of Lemma~~\ref{lem:3.2}. \\
   If $p=1$,  we apply $\sigma$ to both sides of the equation $ \left[D(u,\bm{0}),a_{1}h_{\rho^{(1)}}\right]= \left(u,\rho^{(1)}\right)a_{1}h_{\rho^{(1)}}$; using~\eqref{eq:18} and~\eqref{eq:19}, yields
   \begin{equation*}
       \left[D(v,\bm{0})+\sum_{i=1}^{m}c_{i}h_{\bm{r}^{(i)}},\sum_{k=1}^{q}b_{k}h_{q^{(k)}}\right]= -\left(u,\rho^{(1)}\right)\sum_{k=1}^{q}b_{k}h_{q^{(k)}},
   \end{equation*}
   which is an absurd, as the least degree terms are of different degree, namely, the least degree term in left hand side of above equation is $\bm{r}^{(1)}+q^{(1)}$ since $\left(\overline{\bm{r}^{(1)}},q^{(1)}\right)\neq 0$, whereas that of the right hand side is $q^{(1)}$.\\
   If $p>1$,  we apply $ \sigma$ to the equation
   \begin{equation}\label{eq:20}\tag{20}
       \left[D(u,\bm{0}),\sum_{k=1}^{p}a_{k}h_{{\rho}^{(k)}}\right]=\sum_{k=1}^{p}\left(u,\rho^{(k)}\right)a_{k}h_{\rho^{(k)}},
   \end{equation}
   and using~\eqref{eq:18} and~\eqref{eq:19}, we obtain
   \begin{equation}\label{eq:21}\tag{21}
      \left[D(v,\bm{0})+\sum_{i=1}^{m}c_{i}h_{\bm{r}^{(i)}}, \sum_{k=1}^{q}b_{k}h_{q^{(k)}}\right]=\sigma\left(\sum_{k=1}^{p}\left(u,\rho^{(k)}\right)a_{k}h_{\rho^{(k)}}\right)
   \end{equation}
   As done in the second case of Lemma 3.2, not all of the $\left(u,\rho^{(k)}\right)$ can be the same; otherwise we would obtain an equation having least degree terms of different degrees in both sides. We can therefore find $k\in \{1,\ldots, p\}$ for which $\left(u,\rho^{(k_{0})}\right)\neq 0$ and we have
   \begin{align}\label{eq:22}\tag{22}
      \sigma\left(\sum_{k=1}^{p}\left(u,\rho^{(k)}\right)a_{k}h_{\rho^{(k)}}\right)= \sigma\left(\left(u,\rho^{(k_{0})}\right)\sum_{k=1}^{p}a_{k}h_{\rho^{(k)}} - \sum_{k\neq k_{0}}\left(u, \rho^{(k)}-\rho^{(k_{0})}\right)a_{k}h_{\rho^{(k)}}\right).
   \end{align}
   Using~\eqref{eq:21} and~\eqref{eq:22}, we see that $q^{(1)}+\bm{r}^{(1)}$ is the least degree term of $ \sigma\left(\displaystyle\sum_{k\neq k_{0}}\left(u,\rho^{(k)}-\rho^{(k_{0})}\right)a_{k}h_{\rho^{(k)}}\right)$. Minimality of $p$ implies that $\overline{\bm{r}^{(1)}} \in \left(\bm{r}^{(1)}+q^{(1)}\right)^{\perp}$ and hence $\overline{\bm{r}^{(1)}} \in \left(q^{(1)}\right)^{\perp}$, a contradiction.
\end{proof}
\begin{theorem}\label{thm:3.6}
    $\operatorname{Aut}(\mathcal{H}_{N})\cong \operatorname{Aut}(\mathcal{H}_{N}')\cong \mathbf{GSp}_{N}(\mathbb{Z})\ltimes (\mathbb{K}^{\times})^{N}$.
\end{theorem}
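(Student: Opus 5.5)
The plan is to build a group homomorphism $\Phi:\operatorname{Aut}(\mathcal{H}_{N})\to\operatorname{Aut}(\mathcal{H}_{N}')$ by restriction, and to show that it is an isomorphism. Theorem~\ref{thm:3.4} then gives the second isomorphism. By Lemma~\ref{lem:3.5}, every $\sigma\in\operatorname{Aut}(\mathcal{H}_{N})$ satisfies $\sigma(\mathcal{H}_{N}')=\mathcal{H}_{N}'$ and $\sigma(\mathfrak{h})=\mathfrak{h}$. Hence $\Phi(\sigma)=\sigma|_{\mathcal{H}_{N}'}$ is an automorphism of $\mathcal{H}_{N}'$, and $\Phi$ is clearly a homomorphism.

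\textbf{Injectivity.} Suppose $\sigma|_{\mathcal{H}_{N}'}=\mathrm{id}$, and let $x\in\mathfrak{h}$. Then $\sigma(x)\in\mathfrak{h}$, and for every $\bm{r}\neq\bm{0}$ we have
\[
[\sigma(x),h_{\bm{r}}]=\sigma([x,h_{\bm{r}}])=[x,h_{\bm{r}}].
\]
So $y=\sigma(x)-x\in\mathfrak{h}$ acts trivially on $\mathcal{H}_{N}'$. Writing $y=D(u,\bm{0})$, this means $(u,\bm{r})=0$ for all $\bm{r}\in\mathbb{Z}^{N}$, hence $u=0$ and $\sigma=\mathrm{id}$. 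The same computation shows more generally that the action of an automorphism on $\mathfrak{h}$ is determined by its action on $\mathcal{H}_{N}'$, because $\mathfrak{h}$ acts faithfully on $\mathcal{H}_{N}'$.

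\textbf{Surjectivity.} Since $\operatorname{Aut}(\mathcal{H}_{N}')$ is generated by the $\sigma_{\bm{Q}}$ and $\sigma_{\lambda}$ of Theorem~\ref{thm:3.4}, it suffices to extend each of these to $\mathcal{H}_{N}$.
\begin{itemize}
\item For $\sigma_{\lambda}$, set $\sigma_{\lambda}|_{\mathfrak{h}}=\mathrm{id}$. This is compatible with the brackets because $[D(u,\bm{0}),h_{\bm{r}}]=(u,\bm{r})h_{\bm{r}}$ is preserved under scaling $h_{\bm{r}}\mapsto\lambda^{\bm{r}}h_{\bm{r}}$.
\item For $\sigma_{\bm{Q}}$, set $\sigma_{\bm{Q}}(D(u,\bm{0}))=D(\bm{Q}^{-\top}u,\bm{0})$. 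Then
\[
[\sigma_{\bm{Q}}D(u,\bm{0}),\sigma_{\bm{Q}}h_{\bm{r}}]=\pm(\bm{Q}^{-\top}u,\bm{Q}\bm{r})h_{\bm{Q}\bm{r}}=\pm(u,\bm{r})h_{\bm{Q}\bm{r}}=\sigma_{\bm{Q}}([D(u,\bm{0}),h_{\bm{r}}]),
\]
with the same sign $(-1)^{|\bm{r}|-1}$ on both sides in the anti-symplectic case.
\end{itemize}
Brackets inside $\mathfrak{h}$ are zero on both sides, and brackets inside $\mathcal{H}_{N}'$ are already preserved. Bijectivity of each extension is clear. Hence $\Phi$ is surjective.

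\textbf{Expected obstacle.} The step I expect to need the most care is the consistency of the extension, in particular the case $\bm{Q}^{\top}\bm{J}\bm{Q}=-\bm{J}$, where the sign conventions of \eqref{eq:17} must be checked against the bracket with $\mathfrak{h}$. The calculation above shows the sign is harmless, since both sides carry the same scalar factor. I would also double-check that the semidirect product structure transfers through $\Phi$; this holds automatically, because $\Phi$ is a group isomorphism.
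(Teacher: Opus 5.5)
Your proposal is correct and follows essentially the same route as the paper. You restrict via Lemma~\ref{lem:3.5}, invoke Theorem~\ref{thm:3.4} for $\mathcal{H}_{N}'$, and observe that the action on $\mathfrak{h}$ is forced to be $D(u,\bm{0})\mapsto D(\bm{Q}^{-\top}u,\bm{0})$. Your faithfulness argument replaces the paper's basis computation of $v=\bm{Q}^{-\top}u$. Your bracket check also supplies the existence of the extension, which the paper asserts (``extends uniquely'') but only proves in the uniqueness direction.
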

\begin{proof}
    Let $\sigma \in \operatorname{Aut}(\mathcal{H}_{N})$, then Lemma~\ref{lem:3.5} implies that $\sigma$ induces an automorphism $ \sigma\big|_{\mathcal{H}_{N}'}$ of $\mathcal{H}_{N}'$. Consequently, by Lemma~\ref{lem:3.1} and Lemma~\ref{lem:3.2}, there exists $\bm{Q}\in \mathbf{GSp}_{N}(\mathbb{Z}) $ such that \begin{equation*}
        \sigma(h_{\bm{r}})=
        \begin{cases}
           \lambda^{\bm{r}}h_{\bm{Q}\bm{r}}, & \text{if} \; \bm{J}\bm{Q}=\bm{Q}^{-\top}\bm{J} \\
           (-1)^{|\bm{r}|-1}\lambda^{\bm{r}} h_{\bm{Q}\bm{r}}, & \text{if} \; \bm{J}\bm{Q}=-\bm{Q}^{-\top}\bm{J} 
        \end{cases}.
    \end{equation*}
    We show that any such $\sigma$ extends uniquely to an automorphism of $\mathcal{H}_{N}$. By Lemma~\ref{lem:3.5}, $\sigma(\mathfrak{h})=\mathfrak{h}$. For $u\in \mathbb{K}^{N}\setminus\{\bm{0}\}$, let $\sigma(D(u,\bm{0}))=D(v,0)$, we show that $v\in \mathbb{K}^{N}\setminus\{\bm{0}\}$ is uniquely determined by $\sigma$. Note that
    \begin{equation*}
        [D(u,0),h_{\bm{r}}]=(u,\bm{r})h_{\bm{r}}.
    \end{equation*}
    Apply $\sigma$ to the above equation, we get
    \begin{align*}
        [D(v,0), h_{\bm{Q}\bm{r}}]&=(u,\bm{r})h_{\bm{Q}\bm{r}}   \\
        (v,\bm{Q}\bm{r})h_{\bm{Q}\bm{r}}&=(u,\bm{r})h_{\bm{Q}\bm{r}}.
        \end{align*}
     We choose a basis of $\mathbb{K}^{N}$ consisting of $\bm{r}_{1},\ldots,\bm{r}_{N}\in \mathbb{Z}^{N}\setminus\{\bm{0}\}$ with $\bm{r}_{i}\notin u^{\perp}=\{v\in \mathbb{K}^{N} \mid \left( v, u\right)=0\}$ for every $i\in \{1,\ldots, N\}$. From the last equation, it follows that $(v,\bm{Q}\bm{r}_{i})=(u,\bm{r}_{i})$, which is $ \left(\bm{Q}^{\top}v,\bm{r}_{i}\right)=(u,\bm{r}_{i})$, which gives $ \left(\bm{Q}^{\top}v-u,\bm{r}_{i}\right)=0$ for all $1\leq i \leq N$. Non-degeneracy of the bilinear form $\left(,\right)$ implies $ \bm{Q}^{\top}v=u$ and hence $ v= \bm{Q}^{-\top}u $. Hence
     \begin{equation*}
         \sigma(D(u,\bm{0}))=D\left(\bm{Q}^{-\top}u,\bm{0}\right), \; \forall \, u \in \mathbb{K}^{N},
     \end{equation*}
    which completes the proof. 
\end{proof}
\begin{remark}
    Note that for $N=2$, we have $\mathbf{GSp}_{2}(\mathbb{Z})=\mathbf{GL}_{2}(\mathbb{Z})$, as for any $A\in \mathbf{GL}_{2}(\mathbb{Z})$, it is easy to see that $\bm{Q}^{\top}\bm{J}\bm{Q}=\operatorname{det}(A)J$. Also we know that $\mathcal{S}_{2}=\mathcal{H}_{2}$. Therefore the group obtained for $\mathcal{H}_{2}'$ as well for  $\mathcal{H}_{2}$ are the same as that obtained in \cite{TX09}.
\end{remark}
\section{Derivations of Hamiltonian Lie algebra}

We first give a generating set for the Hamiltonian Lie algebra $\mathcal{H}_{N}'$ with the following Lemma.
\begin{lemma}\label{lem:4.1}
    $\mathcal{H}_{N}'=\langle h_{\pm e_{i}},\ h_{\pm e_{j}\pm e_{k}} \mid \forall 1\leq i \leq N, 1\leq i < j \leq N\rangle$. 
\end{lemma}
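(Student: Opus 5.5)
This is essentially a restatement of Proposition~\ref{prop:2.3} in Lie-algebraic language, so the plan is to reduce to it directly. Let $S=\{h_{\pm e_{i}},\, h_{\pm e_{j}\pm e_{k}} \mid 1\leq i\leq N,\ 1\leq j<k\leq N\}$ and write $\langle S\rangle$ for the Lie subalgebra it generates. The inclusion $\langle S\rangle\subseteq \mathcal{H}_{N}'$ is immediate: every element of $S$ is some $h_{\bm{r}}$ with $\bm{r}\neq\bm{0}$, these lie in $\mathcal{H}_{N}'=\bigoplus_{\bm{r}\neq \bm{0}}\mathbb{K}h_{\bm{r}}$, and $\mathcal{H}_{N}'$ is a subalgebra.

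For the reverse inclusion, I use the set $L_{S}=\{\bm{r}\in\mathbb{Z}^{N}\mid h_{\bm{r}}\in\langle S\rangle\}$ introduced after Lemma~\ref{lem:2.2}. Identifying each $h_{\bm{r}}\in S$ with its degree $\bm{r}$, the generating set here is exactly the index set $\{\pm e_{i},\pm e_{j}\pm e_{k}\}$ of Proposition~\ref{prop:2.3}. The closure rule that proposition relies on is Lemma~\ref{lem:2.2}(iii): if $\bm{r},\bm{s}\in L_{S}$ and $(\overline{\bm{r}},\bm{s})\neq 0$, then $[h_{\bm{r}},h_{\bm{s}}]=(\overline{\bm{r}},\bm{s})h_{\bm{r}+\bm{s}}$. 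Since $\operatorname{char}\mathbb{K}=0$, the coefficient $(\overline{\bm{r}},\bm{s})$ is a nonzero integer and hence invertible in $\mathbb{K}$, so $h_{\bm{r}+\bm{s}}\in\langle S\rangle$. Proposition~\ref{prop:2.3} therefore gives $L_{S}=\mathbb{Z}^{N}$.

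In particular $h_{\bm{r}}\in\langle S\rangle$ for every $\bm{r}\neq\bm{0}$. These span $\mathcal{H}_{N}'$, because each nonzero graded component is $\mathbb{K}h_{\bm{r}}$, so $\mathcal{H}_{N}'\subseteq\langle S\rangle$ and equality follows.

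The only point needing care is that Proposition~\ref{prop:2.3}'s chains of sums always go through vectors in $L_{S}$ with nonzero pairing, so each step really is a bracket realizable inside $\langle S\rangle$; the genuine combinatorial work is already done there. One small bookkeeping remark: the index condition in the statement should read $1\leq j<k\leq N$, matching Proposition~\ref{prop:2.3}.
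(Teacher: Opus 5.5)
Your proposal is correct and takes the same route as the paper, which proves Lemma~\ref{lem:4.1} simply by citing Proposition~\ref{prop:2.3}. You add the routine details the paper omits (the easy inclusion, the closure rule from Lemma~\ref{lem:2.2}, and spanning by the one-dimensional graded components), and your correction of the index range to $1\leq j<k\leq N$ is right.
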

\begin{proof}
Follows from Proposition~\ref{prop:2.3}.
\end{proof}

Our next aim is to show that every derivation of $\mathcal{H}_{N}'$ is inner. For this purpose, we recall the following lemma.
\begin{lemma}\label{lem:4.2}{\textbf{[Proposition 1.1 of [F]]}}
    Suppose that $G$ is an additive group. If $\mathfrak{g}=\oplus_{\alpha \in G} \mathfrak{g}_{\alpha}$ is a finitely generated $G$-graded Lie algebra, then
    \begin{equation*}
        \text{Der}(\mathfrak{g}) =\oplus_{\alpha\in G}(\text{Der}(\mathfrak{g}))_{\alpha}
    \end{equation*}
    is also $G$-graded, and it satisfies $(\text{Der}(\mathfrak{g}))_{\alpha}(\mathfrak{g}_{\beta})\subset \mathfrak{g}_{\alpha+\beta}$, for all $\alpha, \beta \in G$.
\end{lemma}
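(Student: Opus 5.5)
The plan is to build the homogeneous components of a derivation directly from the grading of $\mathfrak{g}$, and then use finite generation to show that only finitely many of them are nonzero.

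\textbf{Components of a derivation.} Let $D\in \operatorname{Der}(\mathfrak{g})$. For $\beta\in G$ let $\pi_{\beta}\colon\mathfrak{g}\to\mathfrak{g}_{\beta}$ be the projection. For each $\alpha\in G$ define a linear map $D_{\alpha}$ by
\begin{equation*}
D_{\alpha}(x)=\pi_{\alpha+\beta}(D(x)) \quad\text{for } x\in\mathfrak{g}_{\beta},
\end{equation*}
extended linearly to all of $\mathfrak{g}$. By construction $D_{\alpha}(\mathfrak{g}_{\beta})\subset\mathfrak{g}_{\alpha+\beta}$. Since $D(x)$ has only finitely many nonzero homogeneous components, $D(x)=\sum_{\alpha}D_{\alpha}(x)$ is a finite sum for each homogeneous $x$.

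\textbf{Each $D_\alpha$ is a derivation.} Take $x\in\mathfrak{g}_{\beta}$ and $y\in\mathfrak{g}_{\gamma}$. Apply $\pi_{\alpha+\beta+\gamma}$ to $D[x,y]=[Dx,y]+[x,Dy]$. Because $y$ is homogeneous of degree $\gamma$, the degree-$(\alpha+\beta+\gamma)$ part of $[Dx,y]$ is $[\pi_{\alpha+\beta}Dx,y]=[D_{\alpha}x,y]$. Similarly, the degree-$(\alpha+\beta+\gamma)$ part of $[x,Dy]$ is $[x,D_{\alpha}y]$. Hence
\begin{equation*}
D_{\alpha}[x,y]=[D_{\alpha}x,y]+[x,D_{\alpha}y]
\end{equation*}
on homogeneous elements, and so on all of $\mathfrak{g}$ by bilinearity. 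Thus $D_{\alpha}\in(\operatorname{Der}\mathfrak{g})_{\alpha}$, the space of derivations of degree $\alpha$.

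\textbf{Finiteness from finite generation.} Let $y_1,\dots,y_n$ generate $\mathfrak{g}$. Replace them by all their nonzero homogeneous components; this is still a finite generating set $x_1,\dots,x_k$, now consisting of homogeneous elements. Let $F=\{\alpha\in G : D_{\alpha}(x_i)\neq 0 \text{ for some } i\}$. This set is finite, because each $D(x_i)$ has finitely many homogeneous components. For $\alpha\notin F$, the derivation $D_{\alpha}$ vanishes on all generators. The kernel of a derivation is a subalgebra, so $D_{\alpha}$ vanishes on the subalgebra they generate, namely $\mathfrak{g}$; that is, $D_{\alpha}=0$. Now $D$ and $\sum_{\alpha\in F}D_{\alpha}$ are derivations that agree on every generator $x_i$, since $D(x_i)=\sum_{\alpha}D_{\alpha}(x_i)=\sum_{\alpha\in F}D_{\alpha}(x_i)$. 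Two derivations agreeing on generators agree everywhere, so
\begin{equation*}
D=\sum_{\alpha\in F}D_{\alpha}.
\end{equation*}
This shows $\operatorname{Der}(\mathfrak{g})=\sum_{\alpha}(\operatorname{Der}\mathfrak{g})_{\alpha}$.

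\textbf{Directness of the sum.} Suppose $\sum_{\alpha}E_{\alpha}=0$, a finite sum with $E_{\alpha}$ of degree $\alpha$. For each homogeneous $x\in\mathfrak{g}_{\beta}$, the terms $E_{\alpha}(x)$ lie in the distinct components $\mathfrak{g}_{\alpha+\beta}$, so each $E_{\alpha}(x)=0$. Hence every $E_{\alpha}=0$, and the sum is direct.

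The only delicate point is the passage to homogeneous generators in the finiteness step. Without it, the set $F$ could not be read off from the generators. Since each generator has only finitely many homogeneous components, this replacement preserves both finiteness and the generating property, and the rest of the argument is routine.
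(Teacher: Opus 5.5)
Your proof is correct. The paper gives no argument for this lemma of its own; it simply quotes Proposition~1.1 of Farnsteiner. What you have written is the standard proof of that result:
\begin{itemize}
\item split $D$ into the components $D_\alpha$ using the projections $\pi_{\alpha+\beta}$;
\item check that each $D_\alpha$ is a derivation of degree $\alpha$;
\item use a finite set of homogeneous generators to see that only finitely many $D_\alpha$ are nonzero.
\end{itemize}
You leave one point implicit: the decomposition is compatible with the bracket of derivations, i.e.\ $[(\operatorname{Der}\mathfrak{g})_{\alpha},(\operatorname{Der}\mathfrak{g})_{\alpha'}]\subseteq(\operatorname{Der}\mathfrak{g})_{\alpha+\alpha'}$. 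This is immediate, since $DD'(\mathfrak{g}_{\beta})\subseteq\mathfrak{g}_{\alpha+\alpha'+\beta}$ whenever $D$ has degree $\alpha$ and $D'$ has degree $\alpha'$.
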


By Lemma-\ref{lem:4.1}, we know that $ \mathcal{H}_{N}'$ is a finitely generated $\mathbb{Z}^{N}$-graded Lie algebra. Therefore Lemma~\ref{lem:4.1} and Lemma~\ref{lem:4.2} implies that 
\begin{equation*}
    \text{Der}\left(\mathcal{H}_{N}')=\oplus_{\bm{r}\in \mathbb{Z}^{N}}(\text{Der}(\mathcal{H}_{N}')\right)_{\bm{r}}
\end{equation*}
is a $\mathbb{Z}^{N}$-graded Lie algebra. We now prove the following important lemma in this direction.
\begin{lemma}\label{lem:4.3}
    For any $\bm{r}\in \mathbb{Z}^{N}\setminus\{\bm{0}\}$, we have $(\text{Der}(\mathcal{H}'))_{\bm{r}}=\operatorname{ad}\;\mathcal{H}'_{\bm{r}}$.
\end{lemma}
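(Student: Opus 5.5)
Fix $\bm{r}\neq\bm{0}$ and a derivation $D\in(\operatorname{Der}(\mathcal{H}_N'))_{\bm{r}}$. Since every nonzero graded component of $\mathcal{H}_N'$ is one dimensional and $(\mathcal{H}_N')_{\bm{0}}=0$, Lemma~\ref{lem:4.2} gives scalars $a_{\bm{s}}\in\mathbb{K}$ with
\[
D(h_{\bm{s}})=a_{\bm{s}}h_{\bm{r}+\bm{s}}\quad(\bm{s}\neq\bm{0},-\bm{r}),\qquad D(h_{-\bm{r}})=0 .
\]
The strategy is to subtract a suitable multiple of $\operatorname{ad}h_{\bm{r}}$ so that the new derivation kills a generating set of $\mathcal{H}_N'$. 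Lemma~\ref{lem:4.1} then forces it to vanish identically.

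\textbf{Step 1: the derivation identity.} Applying $D$ to $[h_{\bm{s}},h_{\bm{t}}]=(\overline{\bm{s}},\bm{t})h_{\bm{s}+\bm{t}}$ gives
\[
(\overline{\bm{s}},\bm{t})\,a_{\bm{s}+\bm{t}}=(\overline{\bm{r}+\bm{s}},\bm{t})\,a_{\bm{s}}+(\overline{\bm{s}},\bm{r}+\bm{t})\,a_{\bm{t}}, \tag{$\ast$}
\]
with the convention $a_{\bm{0}}$ irrelevant and $a_{-\bm{r}}=0$. The inner derivation $\operatorname{ad}h_{\bm{r}}$ satisfies this with $a_{\bm{s}}=(\overline{\bm{r}},\bm{s})$.

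\textbf{Step 2: normalize.} Choose $\bm{s}_0\neq\bm{0}$ with $(\overline{\bm{r}},\bm{s}_0)\neq0$; a standard basis vector works. Replace $D$ by
\[
D'=D-\frac{a_{\bm{s}_0}}{(\overline{\bm{r}},\bm{s}_0)}\operatorname{ad}h_{\bm{r}},
\]
so that $a'_{\bm{s}_0}=0$.

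\textbf{Step 3: propagate the vanishing.} The goal is to show $a'_{\bm{s}}=0$ for all $\bm{s}$, or at least for all generators $\pm e_i$ and $\pm e_j\pm e_k$. Put $\bm{t}=\bm{s}_0$ in $(\ast)$:
\[
(\overline{\bm{s}},\bm{s}_0)\,a'_{\bm{s}+\bm{s}_0}=(\overline{\bm{r}+\bm{s}},\bm{s}_0)\,a'_{\bm{s}} .
\]
This links $a'_{\bm{s}}$ to $a'_{\bm{s}+\bm{s}_0}$ whenever both coefficients are nonzero. A second useful specialization is $\bm{t}=-\bm{s}$. Because $[h_{\bm{s}},h_{-\bm{s}}]=0$, this gives
\[
0=(\overline{\bm{r}+\bm{s}},-\bm{s})\,a'_{\bm{s}}+(\overline{\bm{s}},\bm{r}-\bm{s})\,a'_{-\bm{s}}=-(\overline{\bm{r}},\bm{s})\bigl(a'_{\bm{s}}+a'_{-\bm{s}}\bigr),
\]
so $a'_{-\bm{s}}=-a'_{\bm{s}}$ whenever $(\overline{\bm{r}},\bm{s})\neq0$.

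Using these relations with several auxiliary vectors $\bm{s}_0$ (all with $(\overline{\bm{r}},\bm{s}_0)\neq0$), I will show that $a'$ is additive along the lines $\bm{s}+\mathbb{Z}\bm{s}_0$ and is then constant zero. Concretely, I expect to prove that $a'_{\bm{s}}=\phi(\bm{s})$ for a $\mathbb{Z}$-linear functional $\phi$ that vanishes on $\bm{r}^{\perp\omega}=\{\bm{s}:(\overline{\bm{r}},\bm{s})=0\}$. Such a $\phi$ must be a multiple of $(\overline{\bm{r}},\cdot)$, and that multiple is $0$ by the normalization $a'_{\bm{s}_0}=0$. Once $D'$ vanishes on the generators in Lemma~\ref{lem:4.1}, it vanishes on $\mathcal{H}_N'$, and hence $D=c\,\operatorname{ad}h_{\bm{r}}$. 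The reverse inclusion $\operatorname{ad}\mathcal{H}'_{\bm{r}}\subseteq(\operatorname{Der}(\mathcal{H}_N'))_{\bm{r}}$ is immediate.

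\textbf{Main obstacle.} The hard part is Step 3, specifically the degenerate vectors $\bm{s}$ with $(\overline{\bm{r}},\bm{s})=0$ or $\bm{s}+\bm{r}$ small, where the coefficients in $(\ast)$ vanish and give no information. I plan to handle these through the Lemma~\ref{lem:2.2} type argument used in Proposition~\ref{prop:2.3}. For such an $\bm{s}$, pick an auxiliary $\bm{t}$ with $(\overline{\bm{s}},\bm{t})$, $(\overline{\bm{r}},\bm{t})$ and $(\overline{\bm{r}},\bm{s}+\bm{t})$ all nonzero. Writing $h_{\bm{s}}$ as a bracket of $h_{\bm{s}+\bm{t}}$ and $h_{-\bm{t}}$, whose coefficients $a'$ are already known to vanish, then yields $a'_{\bm{s}}=0$.

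If the direct approach to linearity is messy, I would first use Theorem~\ref{thm:3.4} and Lemma~\ref{lem:2.1} to move $\bm{r}$ by $\mathbf{Sp}_N(\mathbb{Z})$ to a multiple of $e_1$. Both the grading and inner derivations are equivariant under $\sigma_{\bm{Q}}$, so this reduction is harmless, and it makes $(\overline{\bm{r}},\bm{s})=-r\,s_{m+1}$ explicit. The computation then splits according to whether $s_{m+1}=0$.
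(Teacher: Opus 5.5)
There is a genuine gap. Steps 1 and 2 are fine: the identity $(\ast)$ and the normalization $a'_{\bm{s}_0}=0$ are correct, and so is $a'_{-\bm{s}}=-a'_{\bm{s}}$ when $(\overline{\bm{r}},\bm{s})\neq0$. But Step 3, which carries the whole content of the lemma, is only announced, and the mechanism you describe does not work as stated.

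With $a'_{\bm{s}_0}=0$, the specialization $\bm{t}=\bm{s}_0$ reads $(\overline{\bm{s}},\bm{s}_0)a'_{\bm{s}+\bm{s}_0}=(\overline{\bm{r}+\bm{s}},\bm{s}_0)a'_{\bm{s}}$. Both coefficients are constant on each line $\bm{s}+\Z\bm{s}_0$, so this only says that $a'$ is a geometric progression along that line. It propagates nonzero values just as well as zero ones. Using ``several auxiliary vectors'' does not help, because for another $\bm{s}_1$ the relation at $\bm{t}=\bm{s}_1$ contains the unknown $a'_{\bm{s}_1}$.

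Moreover, $(\ast)$ rearranges to
\[
(\overline{\bm{s}},\bm{t})\bigl(a'_{\bm{s}+\bm{t}}-a'_{\bm{s}}-a'_{\bm{t}}\bigr)=(\overline{\bm{r}},\bm{t})\,a'_{\bm{s}}-(\overline{\bm{r}},\bm{s})\,a'_{\bm{t}} .
\]
So the additivity you ``expect to prove'' is equivalent to $a'$ being a multiple of $(\overline{\bm{r}},\cdot)$, i.e. to the lemma itself. For the pair $(\bm{s},\bm{s}_0)$ it is literally the statement $a'_{\bm{s}}=0$. Your treatment of the degenerate $\bm{s}$ then presupposes vanishing at generic points, which was never established.

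The missing idea is to use $\bm{s}_0$ and $-\bm{s}_0$ together. Your own relation gives $a'_{-\bm{s}_0}=-a'_{\bm{s}_0}=0$, so $D'$ kills $h_{\pm\bm{s}_0}$. Hence $D'$ commutes with $\operatorname{ad}h_{-\bm{s}_0}\circ\operatorname{ad}h_{\bm{s}_0}$, which acts on $h_{\bm{s}}$ by the scalar $-(\overline{\bm{s}},\bm{s}_0)^2$. Comparing this scalar on $h_{\bm{s}}$ and on $h_{\bm{s}+\bm{r}}$ gives, for $\bm{s}\neq-\bm{r}$,
\[
(\overline{\bm{r}},\bm{s}_0)\bigl(2(\overline{\bm{s}},\bm{s}_0)+(\overline{\bm{r}},\bm{s}_0)\bigr)\,a'_{\bm{s}}=0 .
\]
So $a'_{\bm{s}}=0$ off the affine hyperplane $2(\overline{\bm{s}},\bm{s}_0)=-(\overline{\bm{r}},\bm{s}_0)$. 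That hyperplane is the real exceptional set, not $(\overline{\bm{r}},\bm{s})=0$ (or $s_{m+1}=0$ after your $\mathbf{Sp}_N(\Z)$ reduction).

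For $\bm{s}$ on the hyperplane, pick $\bm{u}$ with $(\overline{\bm{u}},\bm{s}_0)\notin\{0,-\tfrac12(\overline{\bm{r}},\bm{s}_0)\}$ and $(\overline{\bm{u}},\bm{s})\neq0$. Then $\bm{u}$ and $\bm{v}=\bm{s}-\bm{u}$ are nonzero and lie off the hyperplane. Applying $D'$ to $[h_{\bm{u}},h_{\bm{v}}]=(\overline{\bm{u}},\bm{s})h_{\bm{s}}$ gives $a'_{\bm{s}}=0$.

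This is exactly the paper's argument. There $\bm{r}$ is moved to $\tau\varepsilon_1$ and $\bm{s}_0=\varepsilon_{m+1}$; the double bracket yields $c\,\tau(2s_1+\tau)=0$, and the exceptional case $\tau=-2s_1$ is handled by a bracket decomposition. Once this step is supplied, you no longer need Lemma~\ref{lem:4.1}, since $D'$ then vanishes on every $h_{\bm{s}}$ directly.
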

\begin{proof}
    Let $\bm{r}=(r_{1}, \ldots ,r_{N})^{\top}\in \mathbb{Z}^{N}\setminus\{\bm{0}\}$ and $\tau=\gcd(r_{1}, \ldots ,r_{N})$, then $ \varepsilon_{1}=\tau^{-1}\bm{r}$ is a primitive vector of $\mathbb{Z}^{N}$. By Lemma~\ref{lem:2.1}, the group $\bm{Sp}_{N}(\mathbb{Z})$ acts transitively on the set of primitive vectors, therefore there exists $\bm{Q}\in \bm{Sp}_{N}(\mathbb{Z}) $ such that  $\bm{Q}e_{1}=\varepsilon_{1}$. For $2\leq i \leq N$, we set $\bm{Q}e_{i} = \varepsilon_{i}$. By Theorem~\ref{thm:3.4}, the matrix $\bm{Q}$ determines an automorphism $\sigma$ of $\mathcal{H}_{N}'$ given by $\sigma(h_{\bm{r}})=\lambda^{\bm{r}}h_{\bm{Q}\bm{r}}$. As automorphism takes a generating set of a Lie algebra to another generating set, it follows that 
    \begin{equation*}
        \langle h_{\pm \varepsilon_{i}},\, h_{\pm \varepsilon_{j} \pm \epsilon_{k}} \mid  \forall\;  i\in \{1,\ldots , N\} \; \text{and} \; \forall  j < k \in \{1,\ldots, N\} \rangle =\mathcal{H}_{N}'.
    \end{equation*}
    Let $\partial \in \operatorname{Der}(\mathcal{H}_{N}')_{\bm{r}}$, since each nonzero graded space of  $\mathcal{H}_{N}'$ is one dimensional, there exists $c \in \mathbb{K}$ such that
    \begin{equation*}
        \partial(h_{\varepsilon_{m+1}})=c h_{\varepsilon_{m+1}+\tau\varepsilon_{1}}.
    \end{equation*}
    We observe that
    \begin{equation*}
        \left(\partial+\frac{c}{\tau}\operatorname{ad}(h_{\tau\varepsilon_{1}})\right)h_{\varepsilon_{m+1}}=0.
    \end{equation*}
    Accordingly, we define $\displaystyle\partial' =\partial + \frac{c}{\tau}\operatorname{ad}(h_{\tau \varepsilon_{1}}) $, with this choice $\partial' $ vanishes on $h_{\pm \varepsilon_{i}}$ for all $i\neq 1$. Now suppose that $c_{1i}^{\pm}\in \mathbb{K}$ are such that  $\partial'(h_{\pm\varepsilon_{i}})=c_{1i}^{\pm} h_{\pm \varepsilon_{i}+\tau\varepsilon_{1}}$, since \begin{equation*}
        [h_{\pm \varepsilon_{i}}, h_{\epsilon_{m+1}}]=0.
    \end{equation*}
    Applying $\partial'$ to both sides yields \begin{equation*}
        \left[\partial'(h_{\pm \varepsilon_{i}}), h_{\varepsilon_{m+1}}\right]+\left[h_{ \pm \varepsilon_{i}}, \partial'(h_{\varepsilon_{m+1}})\right]=0.
    \end{equation*}
    As $ \partial'(h_{\varepsilon_{m+1}})=0$, we obtain $-\tau c_{1i}^{\pm} h_{\tau\varepsilon_{1}\pm \varepsilon_{k}+\varepsilon_{m+1}}=0$ \\
    and consequently, $c_{1i}^{\pm}=0$ for all $ 2\leq i \leq N$. \\
\textbf{Claim 1:}
    For every $\bm{s}=(s_1,\ldots ,s_N)^{\top}\in (\mathbb{Z}^{*})^{N}$ with $\tau\neq -2s_{1}$, we have $\partial'(h_{\bm{Q}\bm{r}})=0$. \\

Following the argument we used in the proof of Claim~3 of Lemma~\ref{lem:2.2}, we obtain
\begin{equation*}
    \left \langle h_{\bm{Qs}} \mid \bm{s}\in (\mathbb{Z}^{*})^N \right \rangle = \mathcal{H}_{N}'.
\end{equation*}
    If $\bm{s}'=\bm{Q}\bm{s}$ for some $ \bm{s} \in (\mathbb{Z}^{*})^{N}$ , we have
\begin{align*}
   \left[\left[h_{\bm{s}'},h_{\varepsilon_{m+1}}\right],h_{-\varepsilon_{m+1}}\right] &=[[h_{\bm{Q}\bm{s}}, h_{\bm{Q}e_{m+1}}], h_{-\bm{Q}e_{m+1}},], \\
   &=(\bm{J}\bm{Q}\bm{s}, \bm{Q}e_{m+1})[h_{\bm{Q}(\bm{s}+e_{m+1})}, h_{-\bm{Q}e_{m+1}}], 
   \\ & = (\bm{J}\bm{Q}\bm{s},\bm{Q}e_{m+1})(\bm{J}\bm{Q}(\bm{s}+e_{m+1}),-\bm{Q}e_{m+1} )h_{\bm{Q}\bm{s}}, \\
   &=(\bm{J}\bm{s},e_{m+1})(\bm{J}(\bm{s}+e_{m+1}), -e_{m+1})h_{\bm{Q}\bm{s}}, \; \text{since} \; \bm{Q}\in \mathbf{Sp}_{N}(\mathbb{Z}) \\
   &= -s_{1}^{2}h_{\bm{Q}\bm{s}}
   \\
   &=-s_{1}^{2}h_{\bm{s}'}.
\end{align*}
    Applying $\partial'$ to both sides of the above equality and using Leibnitz identity yields
\begin{equation*}
    \left[[\partial(h_{\bm{s}'}),h_{\varepsilon_{m+1}}]h_{-\varepsilon_{m+1}}\right]=-s_{1}^{2}h_{\bm{s}'},
\end{equation*}
which implies
\begin{equation*}
    -(s_{1}+\tau)^{2}ch_{\bm{s}'+\tau \varepsilon_{1}}= -cs_{1}^{2}h_{\bm{s}'+\tau \varepsilon_{1}},
\end{equation*}
and hence $c\left((s_{1}+\tau)^{2}-s_{1}^{2}\right)=0$ or equivalently, $c\tau(2r_{1}+\tau)=0$. Since $\tau\neq 0$, we get $c=0$ if  $\tau\neq -2s_{1}$. \\
\textbf{Claim 2:}
    For every $\bm{s}=(s_{1},\ldots,s_{N})^{\top}\in (\mathbb{Z}^{*})^{N}$ satisfying $\tau=-2s_{1}$, we have $\partial'(h_{\bm{Q}\bm{s}})=0$.
\\
Let $\bm{r}',\bm{r}'' \in (\mathbb{Z}^{*})^{N}$ be such that $\bm{r}' +\bm{r}''=\bm{s}$ and $\left(\overline{\bm{r}'},\bm{r}''\right)\neq 0$. By Claim 1, $\partial'(h_{\bm{Q}\bm{r}'})=0$, $\partial'(h_{\bm{Q}\bm{r}''})=0$ and $[h_{\bm{Q}\bm{r}'},h_{\bm{Q}\bm{r}''}]=\left( \overline{\bm{r}'},\bm{r}''\right)h_{\bm{Q}\bm{s}}$. On applying $\partial'$ on both sides of the previous equation, we get \begin{align*}
    (\overline{\bm{r}'},\bm{r}'')\partial'(h_{\bm{Q}\bm{s}})&= [\partial'(h_{\bm{Q}\bm{r}'}),h_{\bm{Q}\bm{r}''}] +[h_{\bm{Q}\bm{r}'}, \partial'(h_{\bm{Q}\bm{r}''})], \\
    &= 0, \; \; \;  \text{by Claim 1} 
\end{align*}
Therefore, we get $\partial'(h_{\bm{Q}\bm{s}})=0$ for every $\bm{s}=(s_{1},\ldots,s_{N})^{\top}\in (\mathbb{Z}^{*})^{N}$ with $\tau=-2s_{1}$. \\
\textbf{Claim 3:}
    For every $\bm{s}\in \mathbb{Z}^{N}\setminus\{\bm{0}\}$, we have $\partial'(h_{\bm{Q}\bm{s}})=0$.
\\
By Claim~1 and Claim~2, we already know that $\partial'(h_{\bm{Q}\bm{s}})=0$, for every $\bm{r}\in (\mathbb{Z}^{*})^{N}$. Now let $\bm{s}\in \mathbb{Z}^{N}\setminus\{ \bm{0}\}$, then by Claim~3 of Proposition~\ref{prop:2.3}, we can find $\bm{s}',\bm{s}'' \in (\mathbb{Z}^{*})^{N} $ satisfying $\bm{s}=\bm{s}'+\bm{s}''$ and $\left(\overline{\bm{s}'},\bm{s}''\right)\neq 0$. Therefore
    \begin{align*}
        \left[h_{\bm{Q}\bm{s}'}, h_{\bm{Q}\bm{s}''}\right] &= \left(\overline{\bm{Q}\bm{s}'},\bm{Q}\bm{s}''\right)h_{\bm{Q}\bm{s}} \\
        &=\left(\overline{\bm{s}'},\bm{s}''\right)h_{\bm{Q}\bm{s}}.
     \end{align*} 
   Applying $\partial'$ to sides of above equation and using Claim 2, we conclude that   $ \partial'(h_{\bm{Q}\bm{s}})=0$, which completes the  proof.
\end{proof}
The following lemma characterizes the outer derivations of $\mathcal{H}_{N}'$.
\begin{lemma}\label{lem:4.4}
    If $\partial \in \text{Der}(\mathcal{H}_{N}')_{\bm{0}}$, then there exists $h=\sum_{k=1}^{N}c_{k}D(e_{k},0)\in \mathfrak{h}$ such that $\partial = \operatorname{ad}(h) $ on $\mathcal{H}_{N}'$.
\end{lemma}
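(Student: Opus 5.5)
Since $\partial$ has degree $\bm{0}$ and every nonzero graded component of $\mathcal{H}_{N}'$ is one dimensional, there are scalars $c_{\bm{r}}\in\mathbb{K}$ with $\partial(h_{\bm{r}})=c_{\bm{r}}h_{\bm{r}}$ for all $\bm{r}\in\mathbb{Z}^{N}\setminus\{\bm{0}\}$. The plan is to show that $\bm{r}\mapsto c_{\bm{r}}$ extends to an additive map on $\mathbb{Z}^{N}$. Then it is the restriction of a linear functional, and $c_{\bm{r}}=(u,\bm{r})$ with $u=\sum_k c_{e_k}e_k$. Since $[D(u,\bm{0}),h_{\bm{r}}]=(u,\bm{r})h_{\bm{r}}$, this gives $\partial=\operatorname{ad}(h)$ with $h=\sum_{k=1}^{N}c_{e_k}D(e_k,\bm{0})$.

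\textbf{Step 1: additivity for non-orthogonal pairs.} Apply $\partial$ to $[h_{\bm{r}},h_{\bm{s}}]=(\overline{\bm{r}},\bm{s})h_{\bm{r}+\bm{s}}$. This yields
\begin{equation*}
(\overline{\bm{r}},\bm{s})\bigl(c_{\bm{r}}+c_{\bm{s}}-c_{\bm{r}+\bm{s}}\bigr)=0 .
\end{equation*}
Hence $c_{\bm{r}+\bm{s}}=c_{\bm{r}}+c_{\bm{s}}$ whenever $(\overline{\bm{r}},\bm{s})\neq0$, and the cases $\bm{r}+\bm{s}=\bm{0}$ are allowed once we set $c_{\bm{0}}:=0$. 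Indeed, when $\bm{s}=-\bm{r}$ the bracket vanishes, so we get no information directly. Instead we argue as in Claim~2 of Lemma~\ref{lem:3.1}. Pick $\bm{t}$ with $(\overline{\bm{t}},\bm{s})\neq0$. Then $c_{\bm{t}}=c_{\bm{t}+\bm{s}}+c_{-\bm{s}}=c_{\bm{t}}+c_{\bm{s}}+c_{-\bm{s}}$, which gives $c_{-\bm{s}}=-c_{\bm{s}}$.

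\textbf{Step 2: additivity for orthogonal pairs.} Let $(\overline{\bm{r}},\bm{s})=0$ with $\bm{r},\bm{s},\bm{r}+\bm{s}\neq\bm{0}$. Choose $\bm{t}$ with $(\overline{\bm{r}},\bm{t})$, $(\overline{\bm{s}},\bm{t})$, $(\overline{\bm{r}+\bm{t}},\bm{s}-\bm{t})$ all nonzero. Such $\bm{t}$ exists because we are avoiding finitely many proper hyperplanes and quadrics in $\mathbb{Z}^N$. The last expression equals $(\overline{\bm{r}},\bm{s})-(\overline{\bm{r}},\bm{t})+(\overline{\bm{t}},\bm{s})$, which is a nonzero linear form in $\bm{t}$ as soon as $\bm{r}\neq-\bm{s}$, so such $\bm{t}$ is available. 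Then
\begin{equation*}
c_{\bm{r}+\bm{s}}=c_{\bm{r}+\bm{t}}+c_{\bm{s}-\bm{t}}=c_{\bm{r}}+c_{\bm{t}}+c_{\bm{s}}+c_{-\bm{t}}=c_{\bm{r}}+c_{\bm{s}}.
\end{equation*}
This is the same device as Claim~3 in the proof of Lemma~\ref{lem:3.1}.

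\textbf{Step 3: conclusion.} Now $c$ is additive on $\mathbb{Z}^{N}$, so $c_{\bm{r}}=\sum_i r_i c_{e_i}=(u,\bm{r})$, and the identification with $\operatorname{ad}(h)$ follows as described above. The only delicate point is the existence of the auxiliary $\bm{t}$ in Step 2. For this I would argue that $\mathbb{Z}^{N}$ is not a finite union of zero sets of nonzero polynomials, which holds since $\mathbb{Z}$ is infinite. Alternatively, one can proceed as in Claim~3 of Proposition~\ref{prop:2.3}, using Lemma~\ref{lem:4.1} to write each $h_{\bm{r}}$ via brackets of generators and propagating additivity along them.
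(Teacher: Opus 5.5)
Your proof is correct and follows essentially the same route as the paper: additivity for non-orthogonal pairs by applying $\partial$ to the bracket, then $c_{-\bm{s}}=-c_{\bm{s}}$, then the orthogonal case via an auxiliary vector (your split $(\bm{r}+\bm{t})+(\bm{s}-\bm{t})$ is the paper's $(\bm{r}-\bm{s}')+(\bm{s}'+\bm{s})$ with $\bm{t}=-\bm{s}'$). One small correction: all three conditions on $\bm{t}$ are linear, since $(\overline{\bm{r}+\bm{t}},\bm{s}-\bm{t})=-(\overline{\bm{r}+\bm{s}},\bm{t})$, so you only need to avoid three proper hyperplanes and no quadrics are involved.
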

\begin{proof}
    Let $\bm{r}\in \mathbb{Z}^{N}\setminus\{\bm{0}\}$, since $\partial\in \text{Der}(\mathcal{H}_{N}')_{0}$ it follows that $ \partial(h_{\bm{r}})=c_{\bm{r}}h_{\bm{r}}$ for some $c_{\bm{r}}\in \mathbb{K}$.  Our aim is to determine the relation of $c_{\bm{r}+\bm{s}}$ with $c_{\bm{r}}$ and $c_{\bm{s}}$. We first observe that for $\bm{r},\bm{s}\in \mathbb{Z}^{N}\setminus\{\bm{0}\}$, satisfying $(\overline{\bm{r}},\bm{s})\neq 0$, we have
    \begin{equation}\label{eq:23}
        c_{\bm{r}+\bm{s}}=c_{\bm{r}}+c_{\bm{s}}.\tag{23}
    \end{equation}
    To show this, we apply $\partial$ to the identity $ [h_{\bm{r}},h_{\bm{s}}]=(\overline{\bm{r}},\bm{s})h_{\bm{r}+\bm{s}}$ to obtain $\left(c_{\bm{r}}+c_{\bm{s}}\right)[h_{\bm{r}},h_{\bm{s}}]=(\overline{\bm{r}},\bm{s})c_{\bm{r}+\bm{s}}h_{\bm{r}+\bm{s}}$, which immediately yields $c_{\bm{r}+\bm{s}}=c_{\bm{r}}+c_{\bm{s}}$, whenever $(\overline{\bm{r}},\bm{s})\neq 0$.

    \medskip
    \noindent
    Next, we show that $c_{-\bm{r}}=-c_{\bm{r}}$ for every $\bm{r}\in \mathbb{Z}^{N}\setminus\{\bm{0}\}$. To this end, we choose $\bm{s}\in \mathbb{Z}^{N}\setminus\{\bm{0}\}$ such that $(\overline{\bm{r}},\bm{s})\neq 0$. Since $\left(\overline{\bm{r}+\bm{s}},-\bm{s}\right)\neq 0$,~\eqref{eq:23} impies that $c_{\bm{s}}=c_{\bm{r}+\bm{s}}+c_{-\bm{s}}$. On the other hand as $(\overline{\bm{r}},\bm{s})\neq 0$, another application of~\eqref{eq:23} gives
    \begin{align*}
        c_{\bm{s}}&=c_{\bm{r}+\bm{s}} +c_{-\bm{r}} \\
        &= c_{\bm{r}}+c_{\bm{s}} +c_{-\bm{r}},
    \end{align*}
    and hence $c_{-\bm{r}}=-c_{\bm{r}}$ for all $\bm{r}\in \mathbb{Z}^{N}\setminus\{\bm{0}\}$ as claimed.

     \medskip 
     
    We now establish that $c_{\bm{r}+\bm{s}}=c_{\bm{r}}+c_{\bm{s}}$ for all $\bm{r},\bm{s}\in \mathbb{Z}^{N}\setminus\{\bm{0}\}$. 
    Note that it is enough to consider the case where $\bm{r}+\bm{s}\neq 0$ and $(\overline{\bm{r}},\bm{s})= 0$. Choose $\bm{s}'\in \mathbb{Z}^{N}\setminus\{\bm{0}\}$ such that $(\overline{\bm{r}},\bm{s}')\neq 0,\; (\overline{\bm{s}},\bm{s}')\neq 0,\; \left(\overline{\bm{r}+\bm{s}},\bm{s}'\right)\neq 0$. Since $\bm{r}+\bm{s}=(\bm{r}-\bm{s}')+(\bm{s}'+\bm{s})$ and $\left(\overline{\bm{r}-\bm{s}'}, \bm{s}' +\bm{s}\right)=\left(\overline{\bm{r}},\bm{s}'\right)-\left(\overline{\bm{s}'},\bm{s}\right)=\left(\overline{\bm{r}+\bm{s}},\bm{s}'\right)\neq 0$, it follows from~\eqref{eq:23} that
    \begin{equation*}
        c_{\bm{r}+\bm{s}}=c_{\bm{r}-\bm{s}'}+c_{\bm{s}'+\bm{s}}.
    \end{equation*}
    Using again~\eqref{eq:23}, together with the identity  $c_{-\bm{r}}=-c_{\bm{r}}$, we compute
    \begin{align}
       \notag c_{\bm{r}+\bm{s}}&=c_{\bm{r}}+c_{-\bm{s}'}+c_{\bm{s}'}+c_{\bm{s}} \\
        &= c_{\bm{r}}+c_{\bm{s}}.\tag{24}\label{eq:24}
    \end{align}
    Finally, let $ c_{e_{i}}=c_{i}$ for every  $1\leq i \leq N$.  For any $\bm{r}=(r_{1},\ldots,r_{N})^{\top}\in \mathbb{Z}^{N}$, a repeated application of~\eqref{eq:24} yields $c_{\bm{r}}=\sum_{i=1}^{N}r_{i}c_{i}$. Consequently, $\displaystyle\partial(h_{\bm{r}})=\left(\sum_{i=1}^{N}r_{i}c_{i}\right)h_{r}$ for every $\bm{r}\in \mathbb{Z}^{N}\setminus\{\bm{0}\}$. If we set $h=\sum_{i=1}^{N}c_{i}D(e_{i},0)$, then it can easily be verified that $\partial = \operatorname{ad}(h)$ on $\mathcal{H}_{N}'$. This completes the proof.
\end{proof}
We have the following theorem that characterizes $\text{Der}(\mathcal{H}_{N}')$.
\begin{theorem}\label{thm:4.5}
    $\text{Der}(\mathcal{H}_{N}')\cong \mathcal{H}_{N}'\rtimes \mathfrak{h}\cong \mathcal{H}_{N}$.
\end{theorem}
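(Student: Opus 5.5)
The plan is to assemble the graded pieces supplied by Lemma~\ref{lem:4.2}, Lemma~\ref{lem:4.3} and Lemma~\ref{lem:4.4}, and then build an explicit isomorphism with $\mathcal{H}_{N}$. By Lemma~\ref{lem:4.1}, $\mathcal{H}_{N}'$ is finitely generated and $\mathbb{Z}^{N}$-graded. Lemma~\ref{lem:4.2} then gives
\[
\operatorname{Der}(\mathcal{H}_{N}')=\bigoplus_{\bm{r}\in\mathbb{Z}^{N}}\operatorname{Der}(\mathcal{H}_{N}')_{\bm{r}}.
\]
This decomposition should be read carefully: an arbitrary derivation $\partial$ has only finitely many nonzero components $\partial_{\bm{r}}$, because $\partial$ is determined by its values on the finite generating set. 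Each generator has a finite image, so only finitely many degrees can occur.

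\medskip

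For $\bm{r}\neq\bm{0}$, Lemma~\ref{lem:4.3} gives $\partial_{\bm{r}}=\operatorname{ad}(a_{\bm{r}}h_{\bm{r}})$ for some $a_{\bm{r}}\in\mathbb{K}$. For $\bm{r}=\bm{0}$, Lemma~\ref{lem:4.4} gives $\partial_{\bm{0}}=\operatorname{ad}(h)|_{\mathcal{H}_{N}'}$ for some $h\in\mathfrak{h}$. Hence every derivation has the form $\operatorname{ad}(X)|_{\mathcal{H}_{N}'}$ with $X\in\mathcal{H}_{N}=\mathcal{H}_{N}'\rtimes\mathfrak{h}$. This uses the fact that $\mathcal{H}_{N}'$ is an ideal of $\mathcal{H}_{N}$, so that $\operatorname{ad}(X)$ preserves $\mathcal{H}_{N}'$.

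\medskip

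Next define $\Phi:\mathcal{H}_{N}\to\operatorname{Der}(\mathcal{H}_{N}')$ by $\Phi(X)=\operatorname{ad}(X)|_{\mathcal{H}_{N}'}$. By the Jacobi identity, $\Phi$ is a Lie algebra homomorphism, and by the preceding paragraph it is surjective.

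\medskip

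Injectivity is the step that needs genuine work, though it is short. Suppose $X=D(u,\bm{0})+\sum_{\bm{r}} a_{\bm{r}}h_{\bm{r}}$ centralizes $\mathcal{H}_{N}'$. Since the centralizer is a graded subspace, each homogeneous component of $X$ must also centralize $\mathcal{H}_{N}'$.
\begin{itemize}
\item[(i)] For the degree-zero component, $[D(u,\bm{0}),h_{\bm{s}}]=(u,\bm{s})h_{\bm{s}}=0$ for all $\bm{s}$ forces $u=0$, because $\mathbb{Z}^{N}$ spans $\mathbb{K}^{N}$.
\item[(ii)] For a component $a_{\bm{r}}h_{\bm{r}}$ with $\bm{r}\neq\bm{0}$, we have $[h_{\bm{r}},h_{\bm{s}}]=(\overline{\bm{r}},\bm{s})h_{\bm{r}+\bm{s}}$. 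Choose $\bm{s}$ with $(\overline{\bm{r}},\bm{s})\neq0$ and $\bm{r}+\bm{s}\neq\bm{0}$; such $\bm{s}$ exists by nondegeneracy of $\bm{J}$. Then $a_{\bm{r}}=0$.
\end{itemize}
Alternatively, injectivity follows from simplicity (Proposition~\ref{prop:2.4}) together with the fact that $\mathcal{H}_{N}'$ has trivial center, but the direct computation above is cleaner.

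\medskip

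Therefore $\operatorname{Der}(\mathcal{H}_{N}')\cong\mathcal{H}_{N}=\mathcal{H}_{N}'\rtimes\mathfrak{h}$. Under this isomorphism, the image of $\mathcal{H}_{N}'$ is the ideal of inner derivations $\operatorname{ad}\mathcal{H}_{N}'$, and $\mathfrak{h}$ maps to the outer part described in Lemma~\ref{lem:4.4}.

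\medskip

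The main obstacle is justifying the finiteness of the graded decomposition. I would handle it by citing Lemma~\ref{lem:4.2} as stated, which already asserts a direct sum and relies on finite generation from Lemma~\ref{lem:4.1}.
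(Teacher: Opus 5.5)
Your proposal is correct and follows the paper's route. In both, Lemmas~\ref{lem:4.2}--\ref{lem:4.4} give $\operatorname{Der}(\mathcal{H}_N')=\operatorname{ad}(\mathcal{H}_N')\oplus\operatorname{ad}(\mathfrak{h})$, and the remaining work is to check that $\operatorname{ad}$ is injective. The only difference is in that last check. You prove injectivity of the single map $X\mapsto \operatorname{ad}(X)|_{\mathcal{H}_N'}$ on all of $\mathcal{H}_N$ by computing the centralizer degree by degree. The paper instead cites simplicity (Proposition~\ref{prop:2.4}) for the $\mathcal{H}_N'$ part and handles $\mathfrak{h}$ separately. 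Your version has the advantage of making the isomorphism with the semidirect product, bracket compatibility included, explicit rather than leaving it implicit.
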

\begin{proof}
    Using  Lemma ~\ref{lem:4.2}, Lemma~\ref{lem:4.3} and Lemma~\ref{lem:4.4}, it is easy to see that $ \mathrm{Der}(\mathcal{H}_{N}')=\operatorname{ad}(\mathcal{H}_{N}')\oplus \operatorname{ad}(\mathfrak{h})$. By Proposition~\ref{prop:2.4}, it is known that the Lie algebra $\mathcal{H}_{N}'$ is a simple Lie algebra, therefore, its center $Z(\mathcal{H}_{N}')=0$. Hence $ \operatorname{ad} : \mathcal{H}_{N}' \to \operatorname{ad}(\mathcal{H}_{N}')$ is a Lie algebra isomorphism. Also the map $\operatorname{ad}: \mathfrak{h} \to \operatorname{ad}(\mathfrak{h})  $ can be easily verified to be a Lie algebra isomorphism. Therefore, the result follows.
\end{proof}
\begin{theorem}\label{thm:4.6}
   $\text{Der}(\mathcal{H}_{N})\cong \mathcal{H}_{N}$.
\end{theorem}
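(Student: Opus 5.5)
The plan is to restrict a derivation of $\mathcal{H}_{N}$ to the ideal $\mathcal{H}_{N}'$ and then use Theorem~\ref{thm:4.5}. Let $D\in \operatorname{Der}(\mathcal{H}_{N})$.

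\textbf{Step 1: $D$ preserves $\mathcal{H}_{N}'$.} Since $\mathcal{H}_{N}'=[\mathcal{H}_{N},\mathcal{H}_{N}]$, the Leibniz rule gives
\[
D([x,y])=[Dx,y]+[x,Dy]\in[\mathcal{H}_{N},\mathcal{H}_{N}]=\mathcal{H}_{N}'.
\]
Hence $D|_{\mathcal{H}_{N}'}\in\operatorname{Der}(\mathcal{H}_{N}')$.

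\textbf{Step 2: subtract an inner derivation.} By Theorem~\ref{thm:4.5} (that is, Lemmas~\ref{lem:4.3} and~\ref{lem:4.4} assembled via Lemma~\ref{lem:4.2}), there is $x\in\mathcal{H}_{N}=\mathcal{H}_{N}'\oplus\mathfrak{h}$ with $D|_{\mathcal{H}_{N}'}=\operatorname{ad}(x)|_{\mathcal{H}_{N}'}$. Put $D'=D-\operatorname{ad}(x)$. This is a derivation of $\mathcal{H}_{N}$ that vanishes on $\mathcal{H}_{N}'$.

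\textbf{Step 3: $D'=0$ on $\mathfrak{h}$.} This is the step that needs a real argument. For $h\in\mathfrak{h}$ and $\bm{r}\neq\bm{0}$, applying $D'$ to $[h,h_{\bm{r}}]=(u,\bm{r})h_{\bm{r}}$ and using $D'(\mathcal{H}_{N}')=0$ gives
\[
[D'(h),h_{\bm{r}}]=0 \qquad \text{for all } \bm{r}\neq\bm{0}.
\]
So $y:=D'(h)$ lies in the centralizer of $\mathcal{H}_{N}'$ in $\mathcal{H}_{N}$. Write $y=D(v,\bm{0})+\sum_i a_i h_{\bm{s}_i}$. We then show this centralizer is zero. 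Pick $\bm{r}$ avoiding the finitely many degrees $-\bm{s}_i$, with $(v,\bm{r})\neq0$ if $v\neq0$, and with $(\overline{\bm{s}_i},\bm{r})\neq 0$ for all $i$. Such an $\bm{r}$ exists because a finite union of proper hyperplanes and points cannot cover $\mathbb{Z}^N$. Then
\[
[y,h_{\bm{r}}]=(v,\bm{r})h_{\bm{r}}+\sum_i a_i(\overline{\bm{s}_i},\bm{r})h_{\bm{s}_i+\bm{r}}.
\]
The degrees $\bm{r}$ and $\bm{s}_i+\bm{r}$ are pairwise distinct and nonzero, so the components are linearly independent. Vanishing of $[y,h_{\bm{r}}]$ forces $v=0$ and all $a_i=0$, so $y=0$. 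Alternatively one can argue gradedly: the centralizer is a graded subspace, and each graded piece is killed in the same way. Therefore $D'=0$, and $D=\operatorname{ad}(x)$ is inner.

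\textbf{Step 4: identify $\operatorname{Der}(\mathcal{H}_{N})$ with $\mathcal{H}_{N}$.} The center of $\mathcal{H}_{N}$ is contained in the centralizer of $\mathcal{H}_{N}'$, which is zero by Step~3. So $Z(\mathcal{H}_{N})=0$, $\operatorname{ad}:\mathcal{H}_{N}\to\operatorname{Der}(\mathcal{H}_{N})$ is injective, and by Step~2 it is surjective. This gives $\operatorname{Der}(\mathcal{H}_{N})\cong\mathcal{H}_{N}$.

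For the equality $\operatorname{Der}(\mathcal{H}_{N})=\operatorname{Der}(\mathcal{H}_{N}')$ in Theorem B, use the restriction map $D\mapsto D|_{\mathcal{H}_{N}'}$. It is injective by Step~3 and surjective by Theorem~\ref{thm:4.5}, since every derivation of $\mathcal{H}_{N}'$ is $\operatorname{ad}(x)|_{\mathcal{H}_{N}'}$ for some $x\in\mathcal{H}_{N}$.
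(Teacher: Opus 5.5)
Your proof is correct, but it follows a different route from the paper's. The paper's argument is two lines. $\mathcal{H}_N'$ is simple (Proposition~\ref{prop:2.4}), hence perfect and centerless, so by Theorem~1.1 of \cite{SZ05} its derivation algebra is complete. Theorem~\ref{thm:4.5} identifies $\operatorname{Der}(\mathcal{H}_N')$ with $\mathcal{H}_N$, so $\mathcal{H}_N$ is complete and $\operatorname{Der}(\mathcal{H}_N)\cong\mathcal{H}_N$.

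You instead prove the needed case of that completeness result by hand. You restrict $D$ to the derived ideal and subtract the inner derivation given by Lemmas~\ref{lem:4.2}--\ref{lem:4.4}. You then kill the remainder by showing, via the $\mathbb{Z}^N$-grading and a generic choice of $\bm{r}$, that the centralizer of $\mathcal{H}_N'$ in $\mathcal{H}_N$ is zero. In the abstract Su--Zhu argument, this step is the one supplied by $L$ being centerless.

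What each route buys:
\begin{itemize}
\item The paper's version is shorter and uses Theorem~\ref{thm:4.5} only as an abstract isomorphism, at the cost of an external theorem and of Proposition~\ref{prop:2.4}.
\item Yours is self-contained and never uses the simplicity of $\mathcal{H}_N'$. It proves $Z(\mathcal{H}_N)=0$ explicitly. It also shows that restriction $D\mapsto D|_{\mathcal{H}_N'}$ is an actual isomorphism $\operatorname{Der}(\mathcal{H}_N)\to\operatorname{Der}(\mathcal{H}_N')$. That is the literal equality stated in Theorem~B, whereas the paper only obtains that both algebras are isomorphic to $\mathcal{H}_N$.
\end{itemize}
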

\begin{proof}
By Proposition~\ref{prop:2.4}, $\mathcal{H}_{N}'$ is simple and therefore it is perfect and its center is trivial. By Theorem~\ref{thm:4.5}, we know that $\text{Der}(\mathcal{H}_{N}')=\mathcal{H}_{N}$. It follows from Theorem 1.1. of \cite{SZ05} that $ \mathcal{H}_{N}$ is complete i.e. $\text{Der}(\mathcal{H}_{N})\cong \mathcal{H}_{N}$.
\end{proof}
\section{Second Cohomology group of Hamiltonian algebra}
For any Lie algebra  $L$ over a field $\mathbb{K}$ of characteristic zero, we begin by recalling the definition of second cohomology group of a Lie algebra $L$ with trivial coefficients. For additional details, please refer to  Chapter IV of \cite{Knapp88}.
For any $n\in \mathbb{Z}_{+}$, define the space of $n$-cochains as  $C^{n}(L;\mathbb{K})=\operatorname{Hom}_{\mathbb{K}}(\Lambda^{n}L;\mathbb{K})$, which is the space of alternating $n$-linear forms on $L$. the coboundary map $d_{n}: C^{n}(L;\mathbb{K})\to C^{n+1}(L;\mathbb{K})$ is defined as \[(d_{n}f)(x_{1},\ldots,x_{n+1})=\sum_{1\leq r<s\leq n+1}(-1)^{r+s}f([x_{r},x_{s}], x_{1},\ldots, \hat{x_{r}},\ldots, \hat{x_{s}},\ldots,x_{n+1}), \]
\\
where $f\in C^{n}(L;\mathbb{K})$ and $x_{1},\ldots,x_{n+1}\in L$. \\
The coboundary operators satisfies \begin{equation}\label{eq:25}d_{n}d_{n-1}=0. \tag{25} \end{equation}
We define the space $Z^{n}(L;\mathbb{K})$ of $n$-cocycles as $\operatorname{ker}d_{n}$ and the space $B^{n}(L;\mathbb{K})$ of $n$-coboundaries as $\operatorname{image}d_{n-1}$. \\
Using~\eqref{eq:25}, we have $\operatorname{im}d_{n-1}\subseteq \operatorname{ker}d_{n}$. The space $H^{n}(L;\mathbb{K})=\operatorname{ker}d_{n}/\operatorname{im}d_{n-1}$ is the $n$th cohomology group of $L$ with trivial coefficients. \par   We are interested in finding the second cohomology group $H^{2}(L;\mathbb{K})$ for $L=\mathcal{H}_{N}$. For $n=2$, we first write the defining conditions of cochains, cocycles and coboundaries explicitly. \\
By a 2-cochain on $L$ we mean a $\mathbb{K}$-bilinear function $\psi:L\times L \to \mathbb{K}$ which satisfies \begin{equation*}
    \psi(x_{1},x_{2})=-\psi(x_{2},x_{1}), \forall \; x_{1},x_{2}\in L.
\end{equation*} 
By a 2-cocycle on $L$ we mean a $\mathbb{K}$-bilinear function $\psi: L\times L \to \mathbb{K}$ satisfying \begin{equation}\label{eq:26}
    \psi(x_{1},x_{2})=-\psi(x_{2},x_{1}), \; \forall \; x_{1},x_{2}\in L,\tag{26}
\end{equation}
and 
\begin{equation}\label{eq:27}
    \psi([x_{1},x_{2}],x_{3})+\psi([x_{2},x_{3}],x_{1})+\psi([x_{3},x_{1}],x_{2})=0, \; \forall \; x_{1},x_{2},x_{3}\in L.\tag{27}
\end{equation}
 \\
By a 2-coboundary on $L$ we mean a 2-cocycle $\psi$ which can be expressed \begin{equation}\label{eq:28}
    \psi=d_{1}f, \tag{28}
\end{equation} 
for some $1$-cochain $f$ on $L$. \\
For any $\mathbb{Z}^{N}$-graded Lie algebra $L$ and $\psi\in C^{n}(L;\mathbb{K})$, we define a homogeneous $n$-cochain of degree $\bm{k}$ as \begin{equation}\label{eq:29}
    \psi_{k}(x_{1},\ldots,x_{n})=\psi(x_{1},\ldots,x_{n})\delta_{\bm{r}_{1}+\cdots+\bm{r}_{n},\bm{k}}, \tag{29} \end{equation}
where $x_{i}\in L_{\bm{r}_{i}},\forall \; i\in \{1,\ldots ,n\}$ and the vector space of all cochains which can be expressed like~\eqref{eq:29}  is denote by $C^{n}(L;\mathbb{K})_{\bm{k}}$, whose elements are termed as homogeneous $n$-cochains of degrre $\bm{k}$.  We have the following regarding the coboundary operator $d_{n}$
\begin{lemma}\label{lem:5.1}
    The coboundary operator preserves the degree of homogeneous cochains of a $\mathbb{Z}^{N}$-graded Lie algebra, that is, $d_{n}(C^{n}(L;\mathbb{K})_{\bm{r}})\subseteq C^{n+1}(L;\mathbb{K})_{\bm{r}}$.
\end{lemma}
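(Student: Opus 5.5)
The plan is to verify directly from the defining formula of $d_{n}$ that a homogeneous cochain of degree $\bm{r}$ is sent to a homogeneous cochain of the same degree. By multilinearity, an $(n+1)$-cochain is determined by its values on tuples of homogeneous elements. So I would first record the characterisation I intend to use: $\phi\in C^{n}(L;\mathbb{K})$ lies in $C^{n}(L;\mathbb{K})_{\bm{r}}$ if and only if $\phi(x_{1},\ldots,x_{n})=0$ whenever $x_{i}\in L_{\bm{r}_{i}}$ with $\bm{r}_{1}+\cdots+\bm{r}_{n}\neq\bm{r}$. This follows from~\eqref{eq:29}, since $\psi_{\bm{r}}$ agrees with $\psi$ on homogeneous tuples of total degree $\bm{r}$ and vanishes on the others.

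Next, let $f\in C^{n}(L;\mathbb{K})_{\bm{r}}$ and take homogeneous elements $x_{1},\ldots,x_{n+1}$ with $x_{i}\in L_{\bm{s}_{i}}$ and $\bm{s}_{1}+\cdots+\bm{s}_{n+1}\neq\bm{r}$. I will show every summand of $(d_{n}f)(x_{1},\ldots,x_{n+1})$ vanishes.

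1. Because $L$ is $\mathbb{Z}^{N}$-graded as a Lie algebra, $[x_{r},x_{s}]\in L_{\bm{s}_{r}+\bm{s}_{s}}$, so it is homogeneous.
2. The argument tuple $([x_{r},x_{s}],x_{1},\ldots,\hat{x_{r}},\ldots,\hat{x_{s}},\ldots,x_{n+1})$ therefore consists of homogeneous elements.
3. The total degree of this tuple is $\bm{s}_{1}+\cdots+\bm{s}_{n+1}\neq\bm{r}$.
4. By the characterisation, $f$ vanishes on it.

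Hence $d_{n}f$ vanishes on every homogeneous tuple of total degree different from $\bm{r}$, which means $d_{n}f\in C^{n+1}(L;\mathbb{K})_{\bm{r}}$.

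The only point needing care is the characterisation of $C^{n}(L;\mathbb{K})_{\bm{r}}$ as the cochains vanishing off total degree $\bm{r}$, because~\eqref{eq:29} defines these spaces via the projections $\psi\mapsto\psi_{\bm{r}}$. I would state this equivalence explicitly, noting that each $\psi_{\bm{r}}$ is well defined by extending multilinearly from homogeneous tuples, since $L=\bigoplus_{\bm{s}} L_{\bm{s}}$. Once this is stated, the rest is the bookkeeping of degrees above, and there is no real obstacle.
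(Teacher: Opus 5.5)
Your proposal is correct and follows essentially the same argument as the paper: since $[x_r,x_s]\in L_{\bm{s}_r+\bm{s}_s}$, every term of $(d_nf)(x_1,\ldots,x_{n+1})$ evaluates $f$ on a homogeneous tuple of the same total degree $\sum_i \bm{s}_i$, so it vanishes unless that degree is $\bm{r}$. Your explicit characterisation of $C^{n}(L;\mathbb{K})_{\bm{r}}$ as the cochains vanishing on homogeneous tuples of total degree different from $\bm{r}$ simply makes precise what the paper uses implicitly.
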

\begin{proof}
For any  $\psi\in C^{n}(L;\mathbb{K})_{\bm{k}}$
, $\psi(x_{1},\ldots,x_{n})\neq 0$ with $x\in L_{\bm{k}_{i}},\forall \; i\in \{1,\ldots,n\}$ implies $\sum_{i=1}^{n}\bm{k}_{i}=\bm{k}$, then $d_{n}\psi\in C^{n+1}(L,\mathbb{K})$ can be expressed as 
\begin{equation}\label{eq;30}
    d_{n}\psi(x_{1},\ldots,x_{n+1})=\sum_{1\leq r<s\leq n+1}(-1)^{r+s}\psi([x_{r},x_{s}], x_{1},\ldots, \hat{x_{r}},\ldots, \hat{x_{s}},\ldots,x_{n+1}),\tag{30}
\end{equation}
 where $x_{i}\in L_{\bm{k}_{i}},\forall \; i\in \{1,\ldots,n+1\}$. One right hand side, we note that for any $1\leq r<s\leq n+1$, $[x_{r},x_{s}]\in L_{\bm{k}_{r}+\bm{k}_{s}}$. Hence in any term of right hand side, the sum of degree of components of $\psi$ is $\sum_{i=1}^{n+1}\bm{k}_{i}$. Therefore by definition definition of $\psi$, \[ \psi([x_{r},x_{s}],, x_{1},\ldots, \hat{x_{r}},\ldots, \hat{x_{s}},\ldots,x_{n+1}) \neq 0\] implies $\sum_{i=1}^{n+1}r_{i}=\bm{k}$. Thus, $d_{n}\psi\in C^{n+1}(L;\mathbb{K})_{\bm{k}}$.
\end{proof}
  We first prove the following important lemma for any $\mathbb{Z}^{N}$-graded Lie algebra $L$

\begin{lemma}\label{lem:5.2}
    For any $\mathbb{Z}^{N}$-graded Lie algebra $L$, the $2$-cohomology group can be expressed as
    
\begin{equation}\label{eq:31}
    H^{2}(L;\mathbb{K})= \prod_{\bm{r}\in \mathbb{Z}^{N}}H^{2}(L; \mathbb{K})_{\bm{r}}, \tag{31}
\end{equation}
where $H^{2}(L; \mathbb{K})_{\bm{r}}= Z^{2}(L; \mathbb{K})_{\bm{r}}/B^{2}(L;\mathbb{K})_{\bm{r}}, \forall \; \bm{r}\in \mathbb{Z}^{N}$.
\end{lemma}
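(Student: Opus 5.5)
The strategy is to decompose cochains by degree and check that every piece of the cohomology computation respects this decomposition. Two facts make this work: a $2$-cochain is determined by its values on pairs of homogeneous elements, and the coboundary operator preserves degree (Lemma~\ref{lem:5.1}).

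First, for any $\psi\in C^{2}(L;\mathbb{K})$ and any $\bm{k}\in\mathbb{Z}^{N}$, define $\psi_{\bm{k}}$ by \eqref{eq:29}. Since $L=\bigoplus_{\bm{r}}L_{\bm{r}}$, a bilinear form is determined by its restrictions to the pieces $L_{\bm{r}_1}\times L_{\bm{r}_2}$. On each such pair exactly one $\psi_{\bm{k}}$ is nonzero, namely the one with $\bm{k}=\bm{r}_1+\bm{r}_2$. Hence $\psi=\sum_{\bm{k}}\psi_{\bm{k}}$, where the sum is locally finite when evaluated on elements, since only finitely many degrees occur in $x_1,x_2$. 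This gives an isomorphism
\[
C^{2}(L;\mathbb{K})\cong\prod_{\bm{k}}C^{2}(L;\mathbb{K})_{\bm{k}}.
\]
The same decomposition holds for $C^{1}$ and $C^{3}$, with the obvious meaning of a product of degree-$\bm{k}$ parts. For $C^{1}$, the degree-$\bm{k}$ part is the set of functionals supported on $L_{\bm{k}}$.

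Next, I would show that $d_{1}$ and $d_{2}$ act componentwise, i.e. $(d\psi)_{\bm{k}}=d(\psi_{\bm{k}})$. Evaluate both sides on homogeneous elements $x_i\in L_{\bm{r}_i}$. Every term on the right of \eqref{eq;30} has total degree $\sum\bm{r}_i$, so only $\psi_{\bm{k}}$ with $\bm{k}=\sum\bm{r}_i$ contributes. This is the argument of Lemma~\ref{lem:5.1}, applied to an arbitrary cochain split into its components. It follows that
\[
Z^{2}=\prod_{\bm{k}}Z^{2}_{\bm{k}},
\]
because $d_2\psi=0$ if and only if each $(d_2\psi)_{\bm{k}}=d_2\psi_{\bm{k}}=0$. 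Likewise, if $\psi=d_1 f$, then $\psi_{\bm{k}}=d_1 f_{\bm{k}}$, so $B^{2}\subseteq\prod_{\bm{k}} B^{2}_{\bm{k}}$.

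The main obstacle is the reverse inclusion $\prod_{\bm{k}} B^{2}_{\bm{k}}\subseteq B^{2}$. This inclusion is what lets the quotient commute with the infinite product. Given $\psi_{\bm{k}}=d_1 f^{(\bm{k})}$ for each $\bm{k}$, I would first replace $f^{(\bm{k})}$ by its degree-$\bm{k}$ component; by the componentwise action of $d_1$, this does not change $d_1 f^{(\bm{k})}$ restricted to degree $\bm{k}$. I would then define $f=\sum_{\bm{k}} f^{(\bm{k})}$. This sum is a well-defined functional because the $f^{(\bm{k})}$ are supported on the distinct subspaces $L_{\bm{k}}$. Then $d_1 f=\psi$, since $(d_1 f)_{\bm{k}}=d_1(f_{\bm{k}})=\psi_{\bm{k}}$ for every $\bm{k}$. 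Hence $B^{2}=\prod B^{2}_{\bm{k}}$.

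Finally, the projection $\prod Z^{2}_{\bm{k}}\to\prod (Z^{2}_{\bm{k}}/B^{2}_{\bm{k}})$ is surjective, and its kernel is $\prod B^{2}_{\bm{k}}=B^{2}$. This yields \eqref{eq:31}.
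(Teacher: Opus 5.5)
Your proposal is correct and follows the same route as the paper: split cochains into homogeneous components, use that the coboundary preserves degree (Lemma~\ref{lem:5.1}) to see that $Z^{2}$ and $B^{2}$ decompose as products, and pass to the quotient. Your version is more careful than the paper's on two points: the paper only shows $B^{2}\subseteq\prod_{\bm{k}}B^{2}_{\bm{k}}$ and appeals to ``linearity'' of $d$ for infinite formal sums, whereas you prove $(d\psi)_{\bm{k}}=d(\psi_{\bm{k}})$ directly and obtain the reverse inclusion by assembling a primitive from the components $f^{(\bm{k})}_{\bm{k}}$, which are supported on distinct pieces $L_{\bm{k}}$.
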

\begin{proof} First we see that any $2$-cochain can be uniquely written as a  sum of homogeneous $2$-cochains on $L=\mathcal{H}_{N}$. Let $\psi\in C^{2}(L;\mathbb{K})$ be a 2-cochain on $L$, we define a collection of $ \psi_{\bm{k}}$ for every $\bm{k}\in \mathbb{Z}^{N}$. For $x\in L_{\bm{r}}$ and $y\in L_{\bm{s}}$, we define \[\psi_{\bm{k}}(x,y)=\begin{cases} \psi(x,y), \; \text{if}\; \bm{r}+\bm{s}=\bm{k}\\
0, \; \; \; \; \; \; \; \; \; \;\text{if}\; \bm{r}+\bm{s}\neq \bm{k}
    
\end{cases}\]
\\
i.e. $\psi_{\bm{k}}(x,y)=\psi(x,y)\delta_{\bm{r}+\bm{s},\bm{k}}$ for every $x\in L_{\bm{r}},y \in L_{\bm{s}}$. Now for any $x,y\in L$, we write $x=\sum_{\bm{r}}x_{\bm{r}}$ and $y=\sum_{s}y_{\bm{s}}$. Bilinearlity of $\psi$ gives \begin{align*}
    \psi(x,y)=\psi(\sum_{\bm{r}}x_{\bm{r}},\sum_{\bm{s}}y_{\bm{s}})&= \sum_{\bm{r}}\sum_{\bm{s}}\psi(x_{\bm{r}},y_{\bm{s}}) \\
    &=\sum_{\bm{k}}  \sum_{\substack{\bm{r},\bm{s}\\ \bm{r}+\bm{s}=\bm{k}}}\psi(x_{\bm{r}},y_{\bm{s}}) \\ 
    &=\sum_{\bm{k}}\sum_{\substack{\bm{r},\bm{s}\\ \bm{r}+\bm{s}=\bm{k}}}\psi_{\bm{k}}(x,y)
\end{align*}

This means that each element $\psi\in C^{2}(L; \mathbb{K})$ can be written as the formal sum of homogeneous $2$-cocycles i.e. $\psi=\sum_{\bm{r}\in \mathbb{Z}^{N}}\psi_{\bm{r}}$. Next we show that $\sum_{\bm{r}}\psi_{\bm{r}}=0$ implies $\psi_{\bm{r}}=0$. For a given index $\bm{k}$, we choose $x\in L_{\bm{r}}$, $y \in L_{\bm{s}}$ such that $\bm{r}+\bm{s}=\bm{k}$, then $\sum_{\bm{r}}\psi_{\bm{r}}(x,y)=0$ implies $\psi_{\bm{k}}(x,y)=0$. Since $\bm{r},\bm{s}\in \mathbb{Z}^{N}$ are artibrary such that $\bm{r}+\bm{s}=\bm{k}$, it follows that $\psi_{\bm{k}}=0, \forall \; \bm{k}\in \mathbb{Z}^{N}$.
Thus, we have
\[C^{2}(L;\mathbb{K})=\prod_{\bm{r}\in \mathbb{Z}^{N}}C^{2}(L;\mathbb{K})_{\bm{r}},\] 
 where $C^{2}(L;\mathbb{K})_{\bm{r}}$ is the vector space of homogeneous 2-cocycles of degree $\bm{r}$. \\
Similarly, we have the following expressions for $1$-cochains and $3$-chains : \begin{align*}
    C^{1}(L;\mathbb{K})=\prod_{\bm{r}\in \mathbb{Z}^{N}}C^{1}(L;\mathbb{K})_{\bm{r}}, \\
 C^{3}(L;\mathbb{K})=  \prod_{\bm{r}\in  \mathbb{Z}^{N}}C^{3}(L;\mathbb{K})_{\bm{r}}
\end{align*} For the expression of $C^{1}(L;\mathbb{K})$, for any $1$-cochain $f$ and any  $ \bm{k}\in \mathbb{Z}^{N}$ we define homogeneous 1-cochain on $\mathcal{H}_{N}$ by \[ f_{\bm{k}}(x)=f(x_{\bm{k}}), \]  
where $x=\sum_{\bm{r}}x_{\bm{r}}$, with $x_{\bm{r}}\in L_{\bm{r}}$. Then $f$ is the formal sum of homogeneous $1$-cochains $f_{\bm{r}}$ i.e. $f=\sum_{\bm{r}\in \mathbb{Z}^{N}}f_{\bm{r}}$. \\
Similarly, any $3$-cochain $\psi$ can be written as a formal sum of homogeneous $3$-cochains $\psi_{\bm{k}}$ defined  as \[\psi_{k}(x_{1},x_{2},x_{3})=\psi(x_{1},x_{2},x_{3})\delta_{\bm{r}+\bm{s}+\bm{t},\bm{k}},\] where $x_{1}\in L_{\bm{r}}, x_{2}\in L_{\bm{s}}$ and $x_{3}\in L_{\bm{t}}$. Then $\psi=\sum_{\bm{r}\in \mathbb{Z}^{N}}\psi_{\bm{r}}$. \\
 Now we show that the expression of $C^{2}(L;\mathbb{K})$ as the direct product of the graded components $C^{2}(L;\mathbb{K})_{\bm{r}}$ induces similar expression for the spaces $Z^{2}(L;\mathbb{K})$ and $B^{2}(L;\mathbb{K})$. That is \begin{align} \label{eq:32}
        Z^{2}(L;\mathbb{K})&=\prod_{\bm{r}\in \mathbb{Z}^{N}}Z^{2}(L;\mathbb{K})_{\bm{r}},\tag{32} \\
        \label{eq:33}
         B^{2}(L;\mathbb{K})&=\prod_{\bm{r}\in \mathbb{Z}^{N}}B^{2}(L;\mathbb{K})_{\bm{r}},\tag{33}
    \end{align} \\
    \\
    For the expression of  $Z^{2}(L;\mathbb{K})$, we recall the differential $d:C^{k}(L;\mathbb{K})\to C^{k+1}(L;\mathbb{K})$ preserves the degree of homogeneous cochains. that a $2$-cochain $\psi$ is cocycle if $d\psi=0$. If $\psi=\sum_{\bm{k}\in \mathbb{Z}^{N}}\psi_{\bm{k}}$, the linearity of $d$ implies $d\psi =\sum_{\bm{r}\in\mathbb{Z}^{N}}d\psi_{\bm{r}}=0$. Lemma~\ref{lem:5.1} implies $d\psi_{\bm{r}} \in C^{3}(L;\mathbb{K})_{\bm{r}}$, and hence $\sum_{\bm{r}\in \mathbb{Z}^{N}}d\psi_{\bm{r}}=0$ implies $d\psi_{\bm{r}}=0, \forall \bm{r}$. Thus $\psi_{\bm{r}}$ is a $2$-cocycle for every $\bm{r} \in \mathbb{Z}^{N}$. Thus, we have  \begin{equation*}
        Z^{2}(L;\mathbb{K})=\prod_{\bm{r}\in \mathbb{Z}^{N}}Z^{2}(L;\mathbb{K})\cap C^{2}(L;\mathbb{K})_{\bm{r}}=\prod_{\bm{r}\in \mathbb{Z}^{N}}Z^{2}(L;\mathbb{K})_{\bm{r}}  \end{equation*} 
        For the expression of $B^{2}(L;\mathbb{K})$, we recall that a 2-cochain $\psi$ is a coboundary if there exists a $1$-cochain $f$ such that $df=\psi$. For the coboundary $\psi$, let $df=\psi$, then we write $f$ as a sum of homogeneous $1$-cochains i.e. $f=\sum_{\bm{r}\in \mathbb{Z}^{N}}f_{\bm{r}}$. Applying $d$ and using the linearity of $f$ yields $\psi =df=\sum_{\bm{r}\in \mathbb{Z}^{N}}df_{\bm{r}}$, where each $df_{\bm{r}}$ is homogeneous $2$-coboundary of degree $\bm{r}$. We just proved that \begin{equation*}
            B^{2}(L;\mathbb{K})=\prod_{\bm{r}\in \mathbb{Z}^{N}}B^{2}(L;\mathbb{K})\cap C^{2}(L;\mathbb{\mathbb{K}})_{\bm{r}}=\prod_{\bm{r}\in \mathbb{Z}^{N}}B^{2}(L;\mathbb{K})_{\bm{r}}.
        \end{equation*}
        Using the expression for $Z^{2}(L;\mathbb{K})$ and $B^{2}(L;\mathbb{K})$, we obtain~\eqref{eq:31}.
    \end{proof}
Using~\eqref{eq:31}, it is enough to consider homogeneous $2$-cocycles and 2-coboundaries of every possible degree $\bm{r}$. We now express homogeneous $2$-cocycles on $\mathcal{H}_{N}$ of degree $0$ .
\begin{proposition}\label{prop:5.3}
    Every homogeneous $2$-cocycle of degree $\bm{0}$ on $\mathcal{H}_{N}$ is of the form $\psi(h_{\bm{r}},h_{\bm{s}})=f(\bm{r})\delta_{\bm{r}+\bm{s},\bm{0}},$ $ \forall \bm{r},\bm{s}\in \mathbb{Z}^{N}$ and $\psi(d_{i},d_{j})=c_{ij}\in \mathbb{K},\; \forall i,j\in \{1,\ldots,N\}$, where $f(\bm{r})= (u,\bm{r})$ for some $u\in \mathbb{K}^{N}$ and $\bm{C}=[c_{ij}] $ is skew symmetric matrix of order $N$. 
\end{proposition}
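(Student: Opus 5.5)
The plan is to write out exactly which values a degree-$\bm{0}$ cochain can take, and then impose the cocycle identity \eqref{eq:27} on triples of homogeneous elements. By \eqref{eq:29}, a homogeneous $2$-cochain $\psi$ of degree $\bm{0}$ can be nonzero only on pairs of homogeneous elements whose degrees sum to $\bm{0}$. On $\mathcal{H}_{N}=\mathcal{H}_{N}'\rtimes\mathfrak{h}$ there are three kinds of pair to consider:
\begin{itemize}
\item pairs $(h_{\bm{r}},h_{-\bm{r}})$ with $\bm{r}\neq\bm{0}$;
\item pairs $(d_i,d_j)$;
\item pairs $(d_i,h_{\bm{r}})$, which have degree $\bm{r}$, so $\psi$ vanishes on them once $\bm{r}\neq\bm{0}$ (and $h_{\bm{0}}=D(\bm{0},\bm{0})=0$).
\end{itemize}
Hence $\psi$ is determined by $f(\bm{r}):=\psi(h_{\bm{r}},h_{-\bm{r}})$ for $\bm{r}\neq\bm{0}$, with $f(\bm{0}):=0$, together with $c_{ij}:=\psi(d_i,d_j)$. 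Skew-symmetry \eqref{eq:26} gives $c_{ij}=-c_{ji}$ and $f(-\bm{r})=\psi(h_{-\bm{r}},h_{\bm{r}})=-f(\bm{r})$.

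Next I will check that triples involving $\mathfrak{h}$ impose no further conditions.
\begin{itemize}
\item For $(d_i,d_j,x)$ with $x$ homogeneous of degree $\bm{r}\neq\bm{0}$, every term in \eqref{eq:27} has total degree $\bm{r}\neq\bm{0}$, so all terms vanish. (For $\bm{r}=\bm{0}$ all brackets vanish since $\mathfrak{h}$ is abelian.) In particular the matrix $\bm{C}=[c_{ij}]$ is an arbitrary skew-symmetric matrix.
\item For $(d_i,h_{\bm{r}},h_{-\bm{r}})$, the brackets give $[d_i,h_{\bm{r}}]=r_ih_{\bm{r}}$, $[h_{\bm{r}},h_{-\bm{r}}]=(\overline{\bm{r}},-\bm{r})h_{\bm{0}}=0$ and $[h_{-\bm{r}},d_i]=r_ih_{-\bm{r}}$. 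The identity \eqref{eq:27} then reads $r_if(\bm{r})+r_i\psi(h_{-\bm{r}},h_{\bm{r}})=0$, which holds automatically by oddness.
\end{itemize}

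The real content comes from triples $(h_{\bm{r}},h_{\bm{s}},h_{\bm{t}})$ with $\bm{t}=-\bm{r}-\bm{s}$. Using bilinearity and antisymmetry of $\omega(\bm{r},\bm{s})=(\overline{\bm{r}},\bm{s})$, all three structure constants equal $\omega(\bm{r},\bm{s})$. The cocycle identity therefore becomes
\begin{equation*}
\omega(\bm{r},\bm{s})\bigl(f(\bm{r}+\bm{s})+f(-\bm{r})+f(-\bm{s})\bigr)=0 .
\end{equation*}
By oddness of $f$ this yields $f(\bm{r}+\bm{s})=f(\bm{r})+f(\bm{s})$ whenever $(\overline{\bm{r}},\bm{s})\neq0$. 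This is exactly relation \eqref{eq:23} in the proof of Lemma~\ref{lem:4.4}, now with $f$ in place of $c$.

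I will then repeat the argument of Lemma~\ref{lem:4.4} verbatim. Oddness is already known here. To remove the restriction $(\overline{\bm{r}},\bm{s})\neq0$, I pick an auxiliary $\bm{s}'$ with $(\overline{\bm{r}},\bm{s}')$, $(\overline{\bm{s}},\bm{s}')$ and $(\overline{\bm{r}+\bm{s}},\bm{s}')$ all nonzero. Such an $\bm{s}'$ exists because one must only avoid finitely many proper hyperplanes in $\mathbb{Z}^{N}$, using non-degeneracy of $\bm{J}$. Decomposing $\bm{r}+\bm{s}=(\bm{r}-\bm{s}')+(\bm{s}'+\bm{s})$ then gives full additivity on $\mathbb{Z}^{N}$, the cases involving $\bm{0}$ being trivial. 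Setting $u=(f(e_1),\dots,f(e_N))^{\top}$, we get $f(\bm{r})=(u,\bm{r})$, which is the stated form.

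The only point I expect to need care is the sign bookkeeping that shows all three structure constants coincide, since the whole argument rests on that reduction to \eqref{eq:23}. Conversely, any $u$ and any skew-symmetric $\bm{C}$ give a cocycle by the same computations, although the proposition only requires the forward direction.
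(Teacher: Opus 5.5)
Your proposal is correct and follows essentially the same route as the paper. Both reduce to $f(\bm{r})=\psi(h_{\bm{r}},h_{-\bm{r}})$ and $c_{ij}=\psi(d_i,d_j)$, take oddness from skew-symmetry, get additivity when $(\overline{\bm{r}},\bm{s})\neq 0$ from the cocycle identity on $(h_{\bm{r}},h_{\bm{s}},h_{-\bm{r}-\bm{s}})$, and extend to all of $\mathbb{Z}^N$ by the auxiliary-vector argument of Lemma~\ref{lem:4.4}. Your explicit checks that $\psi(d_i,h_{\bm{r}})=0$, that the mixed triples impose no constraints, and that all three structure constants equal $(\overline{\bm{r}},\bm{s})$ fill in details the paper leaves implicit.
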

\begin{proof}
    By definition, we have $\psi(h_{\bm{r}},h_{\bm{s}})=g(\bm{r},\bm{s})\delta_{\bm{r}+\bm{s},\bm{0}}$ for some function $g : \mathbb{Z}^{N}\times \mathbb{Z}^{N}\to \mathbb{K}$. Therefore, it is enough to determine $f(\bm{r})=g(\bm{r},-\bm{r})$ for every $\bm{r}\in \mathbb{Z}^{N}$. By condition~\eqref{eq:26}, $f$ is an odd function i.e.  $f(\bm{r})=-f(-\bm{r}), \forall \; \bm{r}\in \mathbb{Z}^{N}$. Putting $x=h_{\bm{r}}, y=h_{\bm{s}}$ and $h_{\bm{t}}$ in~\eqref{eq:27} yields \begin{align*}
         \left(\overline{\bm{r}},\bm{s} \right)\psi(h_{\bm{r}+\bm{s}},h_{\bm{t}}) + \left(\overline{\bm{s}},\bm{t} \right)\psi(h_{\bm{s}+\bm{t}},h_{\bm{r}}) + \left(\overline{\bm{t}},\bm{r} \right)\psi(h_{\bm{t}+\bm{r}},h_{\bm{s}})&=0, \\
        \left(\overline{\bm{r}},\bm{s} \right)g(\bm{r}+\bm{s},\bm{t})\delta_{\bm{r}+\bm{s}+\bm{t},\bm{0}} + \left(\overline{\bm{s}},\bm{t} \right)g(\bm{s}+\bm{t},\bm{r})\delta_{\bm{r}+\bm{s}+\bm{t},\bm{0}} +(\overline{\bm{t}},\bm{r})g(\bm{t}+\bm{r},\bm{s})\delta_{\bm{r}+\bm{s}+\bm{t},\bm{0}}&=0.
    \end{align*}
    When $\bm{r}+\bm{s}+\bm{t}=0$, the above expression gives 
    \begin{align*}
        \left(\overline{\bm{r}},\bm{s} \right)f(\bm{r}+\bm{s})+ \left(\overline{\bm{s}},-\bm{r}-\bm{s}\right)f(-\bm{r})+ \left( \overline{-\bm{r}-\bm{s}},\bm{r} \right)f(-\bm{s})=0
    \end{align*}
    Since $f$ is an odd function, the above equation becomes \begin{equation*}
        \left(\overline{\bm{r}},\bm{s} \right)f(\bm{r}+\bm{s})= \left(\overline{\bm{r}},\bm{s} \right)f(\bm{r})+\left(\overline{\bm{r}},\bm{s}\right)f(\bm{s}).
    \end{equation*}
    For $\left(\overline{\bm{r}},\bm{s} \right)\neq 0$, we have \begin{equation}\label{eq:34}
        f(\bm{r}+\bm{s})=f(\bm{r})+f(\bm{s}),\tag{34}
    \end{equation}
    Now we have a similar situation like that in Lemma~\ref{lem:4.4}. By the same argument as used in Lemma~\ref{lem:4.4}, we deduce that \begin{equation*}
        f(\bm{r}+\bm{s})= f(\bm{r})+f(\bm{s})
    \end{equation*}
    for every $\bm{r},\bm{s}\in \mathbb{Z}^{N}$. Consequently, if $f(e_{i})=u_{i}\in \mathbb{K}$ for $i\in \{1,\ldots,N\}$, then it follows that $f(\bm{r})=(u,\bm{r})$, where $u=(u_{1},\ldots, u_{N})^{\top}\in \mathbb{K}^{N}$. \par
    Next, we focus on the values of $\psi(d_{i},d_{j})$ for $i,j\in \{1,\ldots,N\}$. Let $\psi(d_{i},d_{j})= c_{ij}\in \mathbb{K}$ for $i,j\in \{1,\ldots,N\}$, then condition~\eqref{eq:26} implies that $ c_{ij}=-c_{ji}, \; \forall \; i,j \in \{1,\ldots,N\}$. However, since $[d_{i},d_{j}]= 0$ for $i,j\in \{1,\ldots,N\}$, the condition~\eqref{eq:27} imposes no restrictions on the constants $c_{ij}$. Therefore the matrix $\bm{C}=[c_{ij}]$ is a skew symmetric matrix over $\mathbb{K}$.
\end{proof}
We define the following two homogeneous 2-cocycles of degree $\bm{0}$ on $\mathcal{H}_{N}$ as $\psi_{1}(d_{i},d_{j})=c_{ij}$ and $\psi_{1}(h_{\bm{r}},h_{\bm{s}})=0,\forall \; \bm{r},\bm{s}\in \mathbb{Z}^{N}\setminus\{\bm{0}\}$, where $c_{ij}=-c_{ji},\forall i,j\in \{1,\ldots,N\} $. Also, define $\psi_{2}(d_{i},d_{j})=0, \forall \; i,j\in \{1,\ldots, N\}$ and $\psi_{2}(h_{\bm{r}},h_{\bm{s}})=(u,\bm{r})\delta_{\bm{r}+\bm{s},\bm{0}}$ for some $u \in \mathbb{K}^{N}$. \\
It follows by Proposition~\ref{prop:5.3} that any homogeneous $2$-cocycle $\psi$ of degree $\bm{0}$  on $\mathcal{H}_{N}$ can be uniquely written as $\psi= \psi_{1}+\psi_{2}$. \par  We define the following subspaces of $Z^{2}(L;\mathbb{K})$ as 
\begin{equation*}
    V_{1}= \{ \psi_{1}\mid \psi_{1}(d_{i},d_{j})=c_{ij}, \;  \forall i,j\in \{1,\ldots,N\}, \psi(h_{\bm{r}},h_{\bm{s}})= 0, \forall \bm{r},\bm{s}\in \mathbb{Z}^{N}\setminus\{\bm{0}\}, [c_{ij}]\in \mathfrak{so}_N\}
\end{equation*}
 and 
 \begin{equation*}
     V_{2}= \{\psi_{2}\mid \psi_{2}(d_{i},d_{j})=0, \psi_{2}(h_{\bm{r}},h_{\bm{s}})=(u,\bm{r})\delta_{\bm{r}+\bm{s},\bm{0}}, u \in \mathbb{K}^{N}\}
 \end{equation*}
Denote by $\Lambda^{2}(\mathbb{K}^{N})$ the second exterior power of $\mathbb{K}^{N}$. We have the following lemma.
\begin{lemma}\label{lem:5.4}
$Z^{2}(\mathcal{H}_{N};\mathbb{K})_{\bm{0}}=V_{1}\oplus V_{2}$, where $V_{1}\cong \Lambda^{2}(\mathbb{K}^{N})$ and $V_{2}\cong \mathbb{K}^{N}$ as additive groups.
\end{lemma}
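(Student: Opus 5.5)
Proposition~\ref{prop:5.3} already does most of the work: it describes every homogeneous $2$-cocycle $\psi$ of degree $\bm{0}$ by a skew-symmetric matrix $\bm{C}=[c_{ij}]$ and a vector $u\in\mathbb{K}^{N}$. The plan is to show that the assignment $\psi\mapsto(\bm{C},u)$ is a linear bijection onto $\mathfrak{so}_N\times\mathbb{K}^{N}$, and then read off $V_1$ and $V_2$ as the two factors. The one point I will supply explicitly, which Proposition~\ref{prop:5.3} leaves implicit, is the \emph{mixed} values $\psi(d_i,h_{\bm{r}})$. By homogeneity of degree $\bm{0}$, such a value can be nonzero only when $\bm{0}+\bm{r}=\bm{0}$, i.e.\ $\bm{r}=\bm{0}$. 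But $h_{\bm{0}}=D(\overline{\bm{0}},\bm{0})=0$, so every mixed value vanishes. Since $\mathcal{H}_N=\mathcal{H}_N'\oplus\mathfrak{h}$ with basis $\{h_{\bm{r}}\}_{\bm{r}\neq\bm{0}}\cup\{d_i\}$, a degree-$\bm{0}$ cochain is therefore determined by its values on pairs $(d_i,d_j)$ and $(h_{\bm{r}},h_{-\bm{r}})$.

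\textbf{Step 1: $V_1$ and $V_2$ are subspaces of $Z^{2}(\mathcal{H}_N;\mathbb{K})_{\bm{0}}$.} Each $\psi_1$ and each $\psi_2$ is visibly skew and homogeneous of degree $\bm{0}$. I check \eqref{eq:27} on basis triples.
\begin{itemize}
\item For $\psi_1$: every bracket $[x,y]$ lies in $\mathcal{H}_N'$, and $\psi_1$ vanishes whenever one argument lies in $\mathcal{H}_N'$. So each term of \eqref{eq:27} is zero.
\item For $\psi_2$ on a triple $h_{\bm{r}},h_{\bm{s}},h_{\bm{t}}$: this is exactly the computation in the proof of Proposition~\ref{prop:5.3}, run backwards. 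It reduces to additivity of $\bm{r}\mapsto(u,\bm{r})$, which holds.
\item For $\psi_2$ on triples containing some $d_i$: $\psi_2$ kills $\mathfrak{h}$, so only terms with two $h$'s survive. Take the triple $(d_i,h_{\bm{r}},h_{\bm{s}})$. The surviving terms are $\psi_2([d_i,h_{\bm{r}}],h_{\bm{s}})+\psi_2([h_{\bm{s}},d_i],h_{\bm{r}})$, which equals $r_i\psi_2(h_{\bm{r}},h_{\bm{s}})-s_i\psi_2(h_{\bm{s}},h_{\bm{r}})$. This is supported on $\bm{s}=-\bm{r}$, where it becomes $(r_i+s_i)(u,\bm{r})=0$.
\item Triples containing two or more $d$'s give zero, since $\psi_2$ vanishes on $\mathfrak{h}$ and $[d_i,d_j]=0$.
\end{itemize}

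\textbf{Step 2: the sum is direct and exhausts $Z^{2}(\mathcal{H}_N;\mathbb{K})_{\bm{0}}$.} By Proposition~\ref{prop:5.3} together with the vanishing of the mixed terms, any $\psi\in Z^{2}(\mathcal{H}_N;\mathbb{K})_{\bm{0}}$ equals $\psi_1+\psi_2$, where $\psi_1$ carries $\bm{C}$ and $\psi_2$ carries $u$. So $Z^{2}(\mathcal{H}_N;\mathbb{K})_{\bm{0}}=V_1+V_2$. Now suppose $\psi\in V_1\cap V_2$. It vanishes on $\mathfrak{h}\times\mathfrak{h}$ because it lies in $V_2$, and it vanishes on $\mathcal{H}_N'\times\mathcal{H}_N'$ because it lies in $V_1$. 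Its mixed values vanish by the degree argument above, so $\psi=0$ and the sum is direct.

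\textbf{Step 3: the isomorphisms.} The map $\bm{C}\mapsto\psi_1$ is linear and bijective from $\mathfrak{so}_N$ onto $V_1$. I then identify $\mathfrak{so}_N$ with $\Lambda^{2}(\mathbb{K}^{N})$ by sending $\bm{C}$ to $\sum_{i<j}c_{ij}\,e_i\wedge e_j$; both spaces have dimension $\binom{N}{2}$. For $V_2$, the map $u\mapsto\psi_2$ is linear and surjective. It is injective because $\psi_2(h_{e_i},h_{-e_i})=u_i$ recovers every coordinate of $u$, giving $V_2\cong\mathbb{K}^{N}$.

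I expect none of these steps to be a real obstacle. The only place needing care is the verification of \eqref{eq:27} for $\psi_2$ on triples involving $d_i$, which comes down to the cancellation $(r_i+s_i)(u,\bm{r})=0$ when $\bm{s}=-\bm{r}$.
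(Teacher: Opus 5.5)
Your proposal is correct and follows essentially the paper's route: the paper gives no separate proof of this lemma and reads it off directly from Proposition~\ref{prop:5.3}, via the remark that every homogeneous degree-$\bm{0}$ cocycle splits uniquely as $\psi_{1}+\psi_{2}$. You also check that the elements of $V_{1}$ and $V_{2}$ really are cocycles, and that the mixed values $\psi(d_{i},h_{\bm{r}})$ vanish for degree reasons; both are points the paper leaves implicit.
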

\begin{lemma}\label{lem:5.5}
    The 2-cocycles $\psi_{1}$ and $\psi_{2}$ defined above are not coboundaries unless they are identically zero.
\end{lemma}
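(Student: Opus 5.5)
We have to show that if $\psi_{1}$ or $\psi_{2}$ (or, more usefully, any $\psi_{1}+\psi_{2}\in V_{1}\oplus V_{2}$) equals $d_{1}f$ for some $1$-cochain $f$, then it vanishes identically. The plan is to compute a general coboundary explicitly and compare. By the proof of Lemma~\ref{lem:5.2}, a coboundary of degree $\bm{0}$ can be written as $d_{1}f_{\bm{0}}$, where $f_{\bm{0}}$ is the homogeneous degree $\bm{0}$ part of $f$; such an $f_{\bm{0}}$ is nonzero only on $(\mathcal{H}_{N})_{\bm{0}}=\mathfrak{h}$. We may therefore assume $f$ is a linear functional supported on $\mathfrak{h}$, determined by $a=(f(d_{1}),\ldots,f(d_{N}))^{\top}\in\mathbb{K}^{N}$ and vanishing on every $h_{\bm{r}}$ with $\bm{r}\neq\bm{0}$.

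With the convention $(d_{1}f)(x,y)=-f([x,y])$ from the coboundary formula, I would evaluate $d_{1}f$ on pairs of basis elements. On $\mathfrak{h}$ we get $(d_{1}f)(d_{i},d_{j})=-f([d_{i},d_{j}])=0$. Next, $(d_{1}f)(d_{i},h_{\bm{r}})=-r_{i}f(h_{\bm{r}})=0$ for $\bm{r}\neq \bm{0}$. Finally,
\[
(d_{1}f)(h_{\bm{r}},h_{\bm{s}})=-(\overline{\bm{r}},\bm{s})f(h_{\bm{r}+\bm{s}}).
\]
If $\bm{r}+\bm{s}\neq\bm{0}$, the right-hand side is $0$. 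If $\bm{s}=-\bm{r}$, it is $-(\overline{\bm{r}},-\bm{r})f(h_{\bm{0}})=(\bm{J}\bm{r},\bm{r})f(h_{\bm{0}})=0$, because $\bm{J}$ is skew so $(\bm{J}\bm{r},\bm{r})=0$; indeed $h_{\bm{0}}=D(\bm{0},\bm{0})=0$ anyway. Hence every degree-$\bm{0}$ coboundary on $\mathcal{H}_{N}$ is identically zero, that is, $B^{2}(\mathcal{H}_{N};\mathbb{K})_{\bm{0}}=0$.

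Consequently, if $\psi_{1}+\psi_{2}=d_{1}f$, then $\psi_{1}+\psi_{2}=0$. Evaluating on $(d_{i},d_{j})$ gives $c_{ij}=0$, and evaluating on $(h_{\bm{r}},h_{-\bm{r}})$ gives $(u,\bm{r})=0$ for all $\bm{r}$, so $u=0$. Thus $\psi_{1}=\psi_{2}=0$, which gives the lemma and even the stronger statement that $V_{1}\oplus V_{2}$ meets $B^{2}$ trivially.

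The only point that needs care is the reduction to homogeneous $f$: a general $1$-cochain $f$ might have components $f_{\bm{k}}$ with $\bm{k}\neq\bm{0}$. Since $d_{1}$ preserves degree (Lemma~\ref{lem:5.1}) and a $2$-cochain of degree $\bm{0}$ is the degree $\bm{0}$ component of $d_{1}f$, only $d_{1}f_{\bm{0}}$ contributes. For this I would invoke the uniqueness of the decomposition $C^{2}=\prod_{\bm{k}} C^{2}_{\bm{k}}$ established in Lemma~\ref{lem:5.2}. Beyond this bookkeeping I expect no real obstacle; the key observation is simply that $\mathfrak{h}$ is abelian and does not lie in $[\mathcal{H}_{N},\mathcal{H}_{N}]$.
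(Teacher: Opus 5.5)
Your proof is correct and rests on the same two facts as the paper's proof: $[d_{i},d_{j}]=0$ and $[h_{\bm{r}},h_{-\bm{r}}]=0$, so $d_{1}f$ vanishes on exactly the pairs that support $\psi_{1}$ and $\psi_{2}$. The paper simply evaluates $-f([x,y])$ on those pairs for an arbitrary $1$-cochain $f$, so your reduction to the homogeneous component $f_{\bm{0}}$ is not needed. Your resulting statement $B^{2}(\mathcal{H}_{N};\mathbb{K})_{\bm{0}}=0$ does have the merit of covering sums $\psi_{1}+\psi_{2}$, which is what Corollary~\ref{cor:5.6} actually requires.
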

\begin{proof}
    Suppose there is a 1-cochain(a linear functional) on $\mathcal{H}_{N}$ such that $df=\psi_{1}$, that is, $df(x,y)=-f([x,y])= \psi_{1}(x,y)$ for $x,y\in \mathcal{H}_{N}$. In particular, we have $-f([d_{i},d_{j}])=\psi(d_{i},d_{j})=c_{ij},\; \forall \; i,j\in \{1,\ldots,N\}$, but $[d_{i},d_{j}]=0$ implies that $c_{ij}=0, \; i,j\in \{1,\ldots, N\}$ and hence $\psi_{1}\equiv0$.\\ 
    Similarly if $df=\psi_{2}$ for some 1-cochain on $\mathcal{H}_{N}$, then consider $0=-f([h_{\bm{r}},h_{-\bm{r}}])=\psi(h_{\bm{r}},h_{-\bm{r}})=(u,\bm{r})$, which means that $(u,\bm{r})=0, \forall \; \bm{r}\in \mathbb{Z}^{N}\setminus\{\bm{0}\}$ and hence $u=\bm{0}$ i.e. $\psi_{2}=0$.
\end{proof}
Using Lemma~\ref{lem:5.2}, Lemma~\ref{lem:5.4} and Lemma~\ref{lem:5.5}, the following corollary is immediate 
\begin{corollary}\label{cor:5.6}
    $H^{2}(\mathcal{H}_{N}; \mathbb{K})_{\bm{0}}\cong \Lambda^{2}(\mathbb{K}^{N})\oplus \mathbb{K}^{N}$.
\end{corollary}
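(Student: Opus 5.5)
The plan is to compute the degree-$\bm{0}$ component $H^{2}(\mathcal{H}_{N};\mathbb{K})_{\bm{0}}=Z^{2}(\mathcal{H}_{N};\mathbb{K})_{\bm{0}}/B^{2}(\mathcal{H}_{N};\mathbb{K})_{\bm{0}}$ given by Lemma~\ref{lem:5.2}. I would show two things: the numerator is $V_{1}\oplus V_{2}\cong\Lambda^{2}(\mathbb{K}^{N})\oplus\mathbb{K}^{N}$, and the denominator is zero.

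\textbf{Step 1: the cocycles.} I would take the numerator directly from Proposition~\ref{prop:5.3} and Lemma~\ref{lem:5.4}. A homogeneous degree-$\bm{0}$ cocycle is determined by two pieces of data: the skew matrix $[c_{ij}]=[\psi(d_{i},d_{j})]$, and the additive function $f(\bm{r})=\psi(h_{\bm{r}},h_{-\bm{r}})=(u,\bm{r})$. Both assignments are linear. The map $[c_{ij}]\mapsto\psi_{1}$ identifies $V_{1}$ with the skew-symmetric $N\times N$ matrices, that is, with $\Lambda^{2}(\mathbb{K}^{N})$. The map $u\mapsto\psi_{2}$ is injective, since $u_{i}=\psi_{2}(h_{e_{i}},h_{-e_{i}})$, so $V_{2}\cong\mathbb{K}^{N}$.

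I would also record why there is no mixed component. A degree-$\bm{0}$ cochain can pair $d_{i}\in(\mathcal{H}_{N})_{\bm{0}}$ only with elements of degree $\bm{0}$, i.e.\ only with $\mathfrak{h}$. So $\psi(d_{i},h_{\bm{r}})=0$ for all $\bm{r}\neq\bm{0}$ automatically.

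\textbf{Step 2: the coboundaries.} I would prove $B^{2}(\mathcal{H}_{N};\mathbb{K})_{\bm{0}}=0$ directly. By the decomposition in the proof of Lemma~\ref{lem:5.2}, every element of $B^{2}_{\bm{0}}$ has the form $df_{\bm{0}}$, where $f_{\bm{0}}$ is a homogeneous $1$-cochain of degree $\bm{0}$. Such an $f_{\bm{0}}$ vanishes on $\mathcal{H}_{N}'$ and is an arbitrary linear functional on $\mathfrak{h}$. Then $df_{\bm{0}}(x,y)=-f_{\bm{0}}([x,y])$. Since $[\mathcal{H}_{N},\mathcal{H}_{N}]=\mathcal{H}_{N}'$ has zero $\mathfrak{h}$-component (indeed $[h_{\bm{r}},h_{-\bm{r}}]=0$ and $[\mathfrak{h},\mathfrak{h}]=0$), we get $df_{\bm{0}}\equiv0$.

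This argument is stronger than Lemma~\ref{lem:5.5}. That lemma treats $\psi_{1}$ and $\psi_{2}$ separately, whereas here a sum $\psi_{1}+\psi_{2}$ also cannot be a nonzero coboundary. This is the one point I expect needs care: Lemma~\ref{lem:5.5} alone does not literally exclude a nonzero coboundary of the form $\psi_{1}+\psi_{2}$. The vanishing of all degree-$\bm{0}$ coboundaries settles it.

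\textbf{Conclusion.} Combining the two steps gives
\[
H^{2}(\mathcal{H}_{N};\mathbb{K})_{\bm{0}}=Z^{2}(\mathcal{H}_{N};\mathbb{K})_{\bm{0}}=V_{1}\oplus V_{2}\cong\Lambda^{2}(\mathbb{K}^{N})\oplus\mathbb{K}^{N}.
\]
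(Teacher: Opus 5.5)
Your proof is correct and follows the paper's route. The cocycle space $Z^{2}(\mathcal{H}_{N};\mathbb{K})_{\bm{0}}=V_{1}\oplus V_{2}$ comes from Proposition~\ref{prop:5.3} and Lemma~\ref{lem:5.4}. The coboundaries are handled by the same observation that underlies the paper's proof of Lemma~\ref{lem:5.5}: $[d_{i},d_{j}]=0$ and $[h_{\bm{r}},h_{-\bm{r}}]=0$, so no bracket has an $\mathfrak{h}$-component. You package this as $B^{2}(\mathcal{H}_{N};\mathbb{K})_{\bm{0}}=0$, which is a real improvement: Lemma~\ref{lem:5.5} as stated treats $\psi_{1}$ and $\psi_{2}$ only separately, so it does not by itself exclude a nonzero coboundary of the form $\psi_{1}+\psi_{2}$, exactly as you point out.
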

Next we show that any homogeneous 2-cocycle of degree $\bm{k}\neq \bm{0}$ in $\mathcal{H}_{N}$ is actually a coboundary, In the following Lemma, we denote the coboundary operator $d_{1}: C^{1}(\mathcal{H}_{N};\mathbb{K})\to C^{2}(\mathcal{H}_{N}; \mathbb{K})$  by symbol $d$ to avoid confusion with the zero degree derivation $d_{1}\equiv t_{1}\frac{\partial}{\partial t_{1}}$.
\begin{lemma}\label{lem:5.7}
    $H^{2}(\mathcal{H}_{N};\mathbb{K})_{\bm{k}}=0$ for every $\bm{k}\in \mathbb{Z}^{N}\setminus\{\bm{0}\}$
\end{lemma}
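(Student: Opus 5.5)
The approach is the standard "degree derivation" trick: for $\bm{k}\neq\bm{0}$, pick $u\in\mathbb{K}^{N}$ with $(u,\bm{k})\neq 0$ and put $h_u=D(u,\bm{0})\in\mathfrak{h}$. Since $\operatorname{ad}h_u$ acts on $(\mathcal{H}_N)_{\bm{r}}$ by the scalar $(u,\bm{r})$, it acts on every homogeneous cochain of degree $\bm{k}$ by $\pm(u,\bm{k})$. The Cartan homotopy formula then shows that this action is nullhomotopic. Hence a homogeneous cocycle of degree $\bm{k}$ is $(u,\bm{k})^{-1}$ times a coboundary.

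Concretely, let $\psi\in Z^{2}(\mathcal{H}_N;\mathbb{K})_{\bm{k}}$. Define the $1$-cochain
\[
f(x)=\frac{1}{(u,\bm{k})}\,\psi(h_u,x),\qquad x\in\mathcal{H}_N .
\]
It is homogeneous of degree $\bm{k}$, because $h_u$ has degree $\bm{0}$. Now apply the cocycle identity \eqref{eq:27} with $x_1=h_u$, $x_2=x\in(\mathcal{H}_N)_{\bm{r}}$, $x_3=y\in(\mathcal{H}_N)_{\bm{s}}$:
\[
\psi([h_u,x],y)+\psi([x,y],h_u)+\psi([y,h_u],x)=0 .
\]
Using $[h_u,x]=(u,\bm{r})x$ and $[y,h_u]=-(u,\bm{s})y$, together with antisymmetry, this becomes
\[
(u,\bm{r}+\bm{s})\,\psi(x,y)=\psi(h_u,[x,y]) .
\]
By homogeneity, $\psi(x,y)\neq 0$ forces $\bm{r}+\bm{s}=\bm{k}$. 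So for all homogeneous $x,y$, and hence by bilinearity for all $x,y$,
\[
\psi(x,y)=\frac{1}{(u,\bm{k})}\,\psi(h_u,[x,y])=f([x,y]).
\]
Note that when $\bm{r}+\bm{s}\neq\bm{k}$ both sides vanish, since $[x,y]$ has degree $\bm{r}+\bm{s}$ while $\psi(h_u,\cdot)$ is supported in degree $\bm{k}$.

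It remains to match the sign convention for $d$. The paper writes $df(x,y)=-f([x,y])$, so with this convention $\psi=d(-f)$. In either case $\psi\in B^{2}(\mathcal{H}_N;\mathbb{K})_{\bm{k}}$, and therefore $H^{2}(\mathcal{H}_N;\mathbb{K})_{\bm{k}}=0$.

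The points needing care are minor. First, a suitable $u$ exists, for instance $u=\bm{k}$ itself, since $(\bm{k},\bm{k})\neq 0$ for a nonzero integer vector in characteristic zero. Second, the identity $\psi(x,y)=f([x,y])$ must be checked on all pairs of graded pieces, including those involving $\mathfrak{h}=(\mathcal{H}_N)_{\bm{0}}$; the computation above is uniform in $\bm{r},\bm{s}$, including $\bm{r}=\bm{0}$ or $\bm{s}=\bm{0}$, so no separate case is needed. I expect the only real obstacle to be this bookkeeping of signs and of the homogeneity support in the cocycle identity.
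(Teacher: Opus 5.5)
Your proof is correct and is essentially the paper's argument. The paper takes $u=e_i$ with $k_i\neq 0$ and uses the same $1$-cochain, namely $-\frac{1}{k_i}\psi(d_i,\cdot)$ supported in degree $\bm{k}$; this is your $-f$, since $\psi(d_i,d_j)=0$ when $\bm{k}\neq\bm{0}$. The only difference is in the verification: the paper checks $\psi=df$ case by case, first on pairs $h_{\bm{r}},h_{\bm{s}}$ and then on pairs $d_j,h_{\bm{k}}$ split according to whether $k_j=0$, using an auxiliary decomposition $\bm{k}=\bm{r}+\bm{s}$ with $(\overline{\bm{r}},\bm{s})\neq 0$, whereas your single identity $(u,\bm{r}+\bm{s})\psi(x,y)=\psi(h_u,[x,y])$ for all homogeneous $x,y$ handles the pairs involving $\mathfrak{h}$ directly.
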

\begin{proof}
    Let $\psi$ be a homogeneous 2-cocycle of degree $\bm{k}\neq \bm{0}$ in $\mathcal{H}_{N}$. Let $k_{i}\neq 0$ and for any $\bm{r},\bm{s}\in \mathbb{Z}^{N}\setminus\{\bm{0}\}$ with $\bm{r}+\bm{s}=\bm{k}$, we note that \begin{align}\label{eq:35}\tag{35}
        \psi([d_{i},h_{\bm{r}}],h_{\bm{s}})+\psi([h_{\bm{r}},h_{\bm{s}}],d_{i})+\psi([h_{s},d_{i}],h_{\bm{r}})&=0,\\\notag
        r_{i}\psi(h_{\bm{r}},h_{\bm{s}})+(\overline{\bm{r}},\bm{s})\psi(h_{\bm{r}+\bm{s}},d_{i})-s_{i}\psi(h_{\bm{s}},h_{\bm{r}})&=0.
    \end{align}
    Using skew symmetry of $\psi$, the above equation  yields 
    \begin{equation*}
        \psi(h_{\bm{r}},h_{\bm{s}})=\frac{\left(\overline{\bm{r}},\bm{s}\right)}{k_{i}}\psi(d_{i},h_{\bm{r}+\bm{s}}),
    \end{equation*}
    for any $\bm{r},\bm{s}\in \mathbb{Z}^{N}\setminus\{\bm{0}\}$ with $\bm{r}+\bm{s}=\bm{k}$. \\
    Define a linear functional $f$ on $\mathcal{H}_{N}$ by $f(h_{\bm{m}})= -\frac{1}{k_{i}}\psi(d_{i},h_{m})\delta_{\bm{k},\bm{m}}$ for every $\bm{m}\in \mathbb{Z}^{N}\setminus\{\bm{0}\}$ and $f(d_{j})=0$ for every $j\in \{1,\ldots,N\}$. \\
    We note that $df(h_{\bm{r}},h_{\bm{s}})=\psi(h_{\bm{r}}, h_{\bm{s}})$ for $\bm{r},\bm{s}\in \mathbb{Z}^{N}\setminus\{\bm{0}\}$ with $\bm{r}+\bm{s}=\bm{k}$ and $df(d_{j},h_{\bm{k}})=\psi(d_{i},h_{\bm{k}})$. \\
    We have $df(h_{\bm{r}},h_{\bm{s}})=-f([h_{\bm{r}},h_{\bm{s}}])=-\left(\overline{\bm{r}},\bm{s}\right)f(h_{\bm{r}+\bm{s}})=\frac{\left( \overline{\bm{r}},\bm{s}\right)}{k_{i}}\psi(d_{i},h_{\bm{r}+\bm{s}})$ for every $\bm{r},\bm{s}\in \mathbb{Z}^{N}\setminus\{\bm{0}\}$ with $\bm{r}+\bm{s}=\bm{k}$. \;\
    If $k_{j}=0$ for some $j\in \{1,\ldots, N\}$, then we note that $\psi(d_{j},h_{\bm{k}})=0$. We choose $\bm{r},\bm{s}\in \mathbb{Z}^{N}\setminus\{\bm{0}\}$ such that $\bm{r}+\bm{s}=\bm{k}$ and $\left(\overline{\bm{r}},\bm{s}\right)\neq 0$, then we use condition~\eqref{eq:27} on 2-cocycle to obtain  
    \\
    \begin{align*}
        r_{j}\psi(h_{\bm{r}},h_{\bm{s}})+\left(\overline{\bm{r}},\bm{s} \right)\psi(h_{\bm{k}},d_{i})-s_{j}\psi(h_{\bm{s}},h_{\bm{r}})&=0, \\
        (r_{j}+s_{j})\psi(h_{\bm{r}},h_{\bm{s}})+\left(\overline{\bm{r}},\bm{s}\right)\psi(h_{\bm{k}},d_{i})&=0.
        \end{align*}
        Since $r_{j}+s_{j}=k_{j}=0$ and $\left(\overline{\bm{r}},\bm{s}\right)\neq 0$, it follows that $\psi(d_{i},h_{\bm{k}})=0$. Consequently, we have \begin{equation*}
            0=-f([d_{j},h_{\bm{k}}])=\psi(d_{j},h_{\bm{k}}).
        \end{equation*}
        Finally, it remains to show that $-f([d_{j},h_{\bm{k}}])=\psi(d_{j},h_{k})$, where $k_{j}\neq 0$. For $i=j$, we are done by the definition of $f$. For $j\neq i$, we repeat the same process as done in~\eqref{eq:35} to obtain \begin{equation*}
            \psi(h_{\bm{r}},h_{\bm{s}})=\frac{\left(\overline{\bm{r}},\bm{s} \right)}{k_{j}}\psi(d_{j},h_{\bm{r}+\bm{s}}), \; \forall \bm{r},\bm{s}\in \mathbb{Z}^{N}\setminus\{\bm{0}\}, \bm{r}+\bm{s}=\bm{k}.
        \end{equation*}
        Thus for any $\bm{r},\bm{s}\in \mathbb{Z}^{N}\setminus\{\bm{0}\}$ with $\left(\overline{\bm{r}},\bm{s}\right)\neq 0$, the above equation gives \begin{equation*}
            \frac{1}{k_{j}}\psi(d_{j},h_{\bm{k}})=\frac{1}{k_{i}}\psi(d_{i},h_{\bm{k}})
        \end{equation*}
        Therefore, $-f([d_{j},h_{\bm{k}}])=-k_{j}f(h_{k})=k_{j}\frac{1}{k_{i}}\psi(d_{i},\bm{k})$. Finally, employing the last equation gives $df(d_{j},h_{\bm{k}})= -f([d_{j},h_{\bm{k}}])=\psi(d_{j},h_{\bm{k}})$.
\end{proof}
\begin{theorem}\label{thm:5.8}
$H^{2}(\mathcal{H}_{N};\mathbb{K})\cong \Lambda^{2}(\mathbb{K}^{N})\oplus \mathbb{K}^{N}$.
\end{theorem}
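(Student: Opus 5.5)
The plan is to assemble Theorem~\ref{thm:5.8} from the graded decomposition already established. First I will invoke Lemma~\ref{lem:5.2}, which identifies $H^{2}(\mathcal{H}_{N};\mathbb{K})$ with $\prod_{\bm{r}\in\mathbb{Z}^{N}}H^{2}(\mathcal{H}_{N};\mathbb{K})_{\bm{r}}$. Lemma~\ref{lem:5.7} then kills every factor with $\bm{r}\neq\bm{0}$: each homogeneous $2$-cocycle of nonzero degree $\bm{k}$ is the coboundary $df$ of the explicit $1$-cochain $f$, which is supported on $h_{\bm{k}}$. The product therefore collapses to the single factor $H^{2}(\mathcal{H}_{N};\mathbb{K})_{\bm{0}}$, and Corollary~\ref{cor:5.6} identifies this factor with $\Lambda^{2}(\mathbb{K}^{N})\oplus\mathbb{K}^{N}$.

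Concretely, I will give the isomorphism as follows. A pair (skew matrix $C=[c_{ij}]$, vector $u\in\mathbb{K}^{N}$) is sent to the class of $\psi_{1}+\psi_{2}$. Here $\psi_{1}(d_{i},d_{j})=c_{ij}$ and $\psi_{1}$ vanishes on all other pairs of basis elements. Also $\psi_{2}(h_{\bm{r}},h_{\bm{s}})=(u,\bm{r})\delta_{\bm{r}+\bm{s},\bm{0}}$, and $\psi_{2}$ vanishes whenever an argument lies in $\mathfrak{h}$. The argument has three steps.

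\textbf{Cocycle.} I will check that $\psi_{1}+\psi_{2}$ satisfies \eqref{eq:27}. This reduces to the identity \[(\overline{\bm{r}},\bm{s})\bigl(f(\bm{r}+\bm{s})-f(\bm{r})-f(\bm{s})\bigr)=0\] for the linear function $f(\bm{r})=(u,\bm{r})$. The mixed triples with $d_{i}$ are handled by $[d_{i},h_{\bm{r}}]=r_{i}h_{\bm{r}}$ and degree considerations.

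\textbf{Surjectivity.} Every degree-$\bm{0}$ cocycle has this form, by Proposition~\ref{prop:5.3} and Lemma~\ref{lem:5.4}.

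\textbf{Injectivity.} A nonzero such cocycle is never a coboundary, by Lemma~\ref{lem:5.5}. One point needs care here: Lemma~\ref{lem:5.5} treats $\psi_{1}$ and $\psi_{2}$ separately, but the same computation handles their sum. If $\psi_{1}+\psi_{2}=df$, then evaluating on $(d_{i},d_{j})$ gives $c_{ij}=-f([d_{i},d_{j}])=0$. Evaluating on $(h_{\bm{r}},h_{-\bm{r}})$ gives $(u,\bm{r})=-f([h_{\bm{r}},h_{-\bm{r}}])=0$ for all $\bm{r}$, hence $u=0$.

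The step I expect to be the main obstacle is verifying that a $2$-cocycle with infinitely many nonzero homogeneous components really is a coboundary, so that the collapse of the product is legitimate. The worry is that the infinitely many $1$-cochains $f_{\bm{k}}$ supplied by Lemma~\ref{lem:5.7} must combine into a single $1$-cochain. I will handle this by defining $f=\sum_{\bm{k}\neq\bm{0}}f_{\bm{k}}$ directly. This is well defined, because each $f_{\bm{k}}$ is supported on the single graded piece $\mathbb{K}h_{\bm{k}}$, and a linear functional on $\mathcal{H}_{N}$ may be prescribed arbitrarily on a basis. Since $d$ respects degrees by Lemma~\ref{lem:5.1}, on any pair of homogeneous arguments only one $f_{\bm{k}}$ contributes. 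Hence $df=\sum_{\bm{k}}df_{\bm{k}}$ holds pointwise, and it equals $\psi-\psi_{\bm{0}}$.

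Finally, the isomorphism is linear in $(C,u)$, and $\mathfrak{so}_{N}\cong\Lambda^{2}(\mathbb{K}^{N})$ via $e_{i}\wedge e_{j}\mapsto E_{ij}-E_{ji}$. This gives the stated $\Lambda^{2}(\mathbb{K}^{N})\oplus\mathbb{K}^{N}$.
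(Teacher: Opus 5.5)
Your proposal is correct and follows the paper's proof, which simply combines Lemma~\ref{lem:5.2}, Corollary~\ref{cor:5.6} and Lemma~\ref{lem:5.7} exactly as you do. Your two extra checks are both valid: first, that a sum $\psi_{1}+\psi_{2}$ (not just each summand separately) is a coboundary only if it vanishes; second, that the infinitely many $f_{\bm{k}}$ glue to a single $1$-cochain because each is supported on $\mathbb{K}h_{\bm{k}}$. These make explicit what the paper leaves implicit in Lemmas~\ref{lem:5.2} and~\ref{lem:5.5}.
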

\begin{proof}
    The proof follows from Lemma~\ref{lem:5.2}, Corollary~\ref{cor:5.6} and Lemma~\ref{lem:5.7}. \end{proof}
%\bibliographystyle{amsplain} 
%\bibliography{your_bib_file_name} 

\begin{thebibliography}{100}
\bibitem[Ba16]{Ba16} V. V. Bavula, \emph{The Groups of automorphisms of the Witt $\mathcal{W}_N$ and Virasoro Lie algebras}, Czechoslovak Mathematical Journal, 66 (141) (2016), 1129–1141. 

\bibitem[Ba17]{Ba17} V. V. Bavula, \emph{The group of automorphisms of the Lie algebra of derivations of a polynomial algebra.} Algebra Appl. 16 (2017), 175–183. DOI: http://dx.doi.org/10.1142/
S0219498817500888.

\bibitem[Ben25]{}  Yves Benoist,  \emph{On the Rational Symplectic Group}. In: Pevzner, M., Sekiguchi, H. (eds) Symmetry in Geometry and Analysis, Volume 1. Progress in Mathematics, vol 357. Birkhäuser, Singapore.

\bibitem[BK25]{BK25} O. Bezushchak, I. Kashuba \emph{Derivations of Lie algebras of vector fields in infinitely many variables} arXiv:2507.04541.

\bibitem[DZ98]{DZ98} D. Z. Dokovic, K. Zhao \emph{Derivations, Isomorphisms, and second cohomology of generalized Witt algebras,} Trans. of Am. Math. Soc. 350 (1998).
\bibitem[Knapp88]{Knapp88} A.~W.~Knapp, \textit{Lie Groups, Lie Algebras, and Cohomology}, Vol.~34, Princeton University Press, 1988.
\url{https://doi.org/10.2307/j.ctv18zhdw5}.
\bibitem[R23]{R23}
Eswara Rao, S. \emph{Hamiltonian extended affine Lie algebra and its representation theory.} J. Algebra 628, 71–97 (2023).
\bibitem[Rao]{Rao} Eswara Rao S \emph{Private communication.}
\bibitem[F]{} R. Farnsteiner, Derivations and central extensions of finitely generated graded Lie algebras, J. Algebra 118 (1988) 33–45. 
\bibitem[FT25]{FT25}
V.~Futorny and S.~Tantubay,
\emph{Representations of Hamiltonian Lie algebras},
arXiv:2503.00749, 2025.
\bibitem[MT11]{MT11}  Malle G, Testerman D. \emph{Linear Algebraic Groups and Finite Groups of Lie Type.} Cambridge University Press; 2011. 

\bibitem[R74]{R74} \emph{On derivations of Lie algebras} Compositio Mathematica, tome 28, no 3 (1974), p. 299-303. 

\bibitem[R86]{R86} A. N. Rudakov, \emph{Subalgebras and automorphisms of Lie algebras of Cartan types,} Funct. Anal. App. 20 (1986) 72-73.
\bibitem[RP25]{RP25}
S.~E. Rao and S.~Pal,
\emph{Representations of Hamiltonian vector fields on a torus},
arXiv:2504.14517, 2025.
 \bibitem[SZ05]{SZ05} Y. Su, L. Zhu, Derivation algebras of centerless perfect Lie algebras are complete, J. Algebra 285 (2005) 508–515.


 \bibitem[TX09]{TX09} Xiao-Min Tang, Jin-Li Xu,
The derivation Lie algebra of the higher rank Virasoro-like algebra and its automorphism groups,
Linear Algebra and its Applications,
Volume 430, Issues 8–9,
2009,
Pages 2170-2181,
ISSN 0024-3795,
https://doi.org/10.1016/j.laa.2008.11.019.




 \end{thebibliography}

\end{document}